\documentclass[a4paper,reqno, 11pt]{amsart}  
\usepackage[DIV=12, oneside]{typearea}
\usepackage[utf8]{inputenc}
\usepackage[T1]{fontenc}
\usepackage[english]{babel}
\usepackage[centertags]{amsmath}
\usepackage{amstext,amssymb,amsopn,amsthm}
\usepackage{nicefrac, esint}
\usepackage{mathrsfs}
\usepackage{dsfont}
\usepackage{bbm}
\usepackage{thmtools}
\usepackage{graphicx}
\usepackage{parskip}

\usepackage[backgroundcolor=white, bordercolor=blue,
linecolor=blue]{todonotes}
\parskip1ex 

\usepackage[colorlinks=true, linkcolor=blue, citecolor=black]{hyperref}
\usepackage{enumitem}

\addto\extrasenglish{}
\addto\extrasenglish{}




\declaretheorem[name=Theorem, numberwithin=section]{theorem}
\newtheorem{corollary}[theorem]{Corollary}
\newtheorem{lemma}[theorem]{Lemma}

\newtheorem{definition}[theorem]{Definition}
\newtheorem{theo:hoelderprep_assumption}[theorem]{Assumption}
\newtheorem{assumptions}{Assumption}

\newtheorem*{example*}{Example}
\newtheorem*{robremark*}{Robustness remark}

\declaretheoremstyle[bodyfont=\normalfont]{remark-style}

\numberwithin{equation}{section}

\theoremstyle{plain}

\newcommand{\N}{\mathds{N}}
\newcommand{\R}{\mathds{R}}

\newcommand{\Z}{\mathds{Z}}

\newcommand{\Vlrxn}{V^{\mu}(M_{\lambda r}(x_0)\big|\R^d)}

\newcommand{\ma}{\mu_{\text{axes}}}
\newcommand{\na}{\nu_{\text{axes}}}
\newcommand{\amax}{\alam}

\newcommand{\BIGOP}[1]
{
\mathop{\mathchoice%
{\raise-0.22em\hbox{\huge $#1$}}%
{\raise-0.05em\hbox{\Large $#1$}}{\hbox{\large $#1$}}{#1}}}
\newcommand{\bigtimes}{\BIGOP{\times}}
\def\Xint#1{\mathchoice
   {\XXint\displaystyle\textstyle{#1}}%
   {\XXint\textstyle\scriptstyle{#1}}%
   {\XXint\scriptstyle\scriptscriptstyle{#1}}%
   {\XXint\scriptscriptstyle\scriptscriptstyle{#1}}%
   \!\int}
\def\XXint#1#2#3{{\setbox0=\hbox{$#1{#2#3}{\int}$}
     \vcenter{\hbox{$#2#3$}}\kern-.5\wd0}}

\def\dashint{\Xint-}
\newcommand{\BIGboxplus}{\mathop{\mathchoice%
{\raise-0.35em\hbox{\huge $\boxplus$}}%
{\raise-0.15em\hbox{\Large $\boxplus$}}{\hbox{\large $\boxplus$}}{\boxplus}}}



\DeclareMathOperator*{\osc}{osc}

\DeclareMathOperator{\dvg}{div}

\renewcommand{\d}{\textnormal{d}}
\newcommand{\wtu}{\widetilde{u}}
\newcommand{\alam}{\alpha_{\max}}


\setlength{\parindent}{0ex}

\begin{document}
\allowdisplaybreaks
 \title{Nonlocal operators with singular anisotropic kernels}

\author{Jamil Chaker}
\author{Moritz Kassmann}

\address{Fakult\"{a}t f\"{u}r Mathematik\\Universit\"{a}t Bielefeld\\Postfach 
100131\\D-33501 Bielefeld}
\address{Fakult\"{a}t f\"{u}r Mathematik\\Universit\"{a}t Bielefeld\\Postfach 
100131\\D-33501 Bielefeld}

\urladdr{www.math.uni-bielefeld.de/$\sim$kassmann}

\keywords{nonlocal operator, energy form, anisotropic measure, 
regularity, weak Harnack inequality, jump process}

\subjclass[2010]{47G20, 35B65, 31B05, 60J75}

\email{jchaker@math.uni-bielefeld.de}
\email{moritz.kassmann@uni-bielefeld.de}

\begin{abstract}
We study nonlocal operators acting on functions in the Euclidean space. The 
operators under consideration generate anisotropic jump processes, e.g., a jump 
process that behaves like a stable process in each direction but with a 
different index of stability. Its generator is the sum of one-dimensional 
fractional Laplace operators with different orders of differentiability. We 
study such operators in the general framework of bounded measurable 
coefficients. We prove a weak Harnack inequality and Hölder regularity results 
for solutions to corresponding integro-differential equations. 
\end{abstract}

\maketitle


\section{Introduction}
In this article we study regularity estimates of weak solutions to 
integro-differential equations 
driven by nonlocal operators with anisotropic singular kernels. Since the 
formulation of the main results involves various technical definitions, let us 
first look at a simple case. 

For $\alpha \in (0,2)$, the fractional Laplace operator $-(-\Delta)^{\alpha/2}$ 
can be represented as an integro-differential operator $L: C^\infty_c(\R^d) \to 
C(\R^d) $ in the following form
\begin{align}\label{eq:def-sym-nonl-op}
L v (x) = \int_{\R^d} \big( v(x+h) - v(x) + v(x-h) 
\big) \; \pi(\d h) \qquad (x \in \R^d) \,,
\end{align}
where the Borel measure $\pi(\d h)$ on $\R^d \setminus \{0\}$ is defined 
by $\pi(\d h) = c_{d, \alpha} \frac{\d h}{|h|^{d+\alpha}}$ and $c_{d, 
\alpha}$ is an appropriate positive constant. Due to its behavior with respect 
to integration and scaling, $\pi$ is a stable L\'{e}vy measure. The fractional 
Laplace operator generates a strongly continuous contraction semigroup, which 
corresponds to a stochastic jump process $(X_t)_{t \geq 0}$ in $\R^d$. Given $A 
\subset \R^d$, the quantity $\pi(A)$ describes the expected number of jumps 
$(X_{t} - X_{t-}) \in A$ within the interval $0 \leq t \leq 1$. A second 
representation of $-(-\Delta)^{\alpha/2}$ is given with the help of Fourier 
analysis because $-\mathcal{F}((-\Delta)^{\alpha/2} u)(\xi) = |\xi|^\alpha 
\mathcal{F}(u) (\xi)$. The function $\xi \mapsto \psi(\xi) = |\xi|^\alpha$ is 
called the multiplier of the fractional Laplace operator or symbol of the 
corresponding stable L\'evy process. 

In this article we study a rather general class of anisotropic nonlocal 
operators, which contains as a simple example an operator $L^{\alpha_1, 
\alpha_2}: C^\infty_c(\R^2) \to C(\R)$ as in \eqref{eq:def-sym-nonl-op} with 
the measure $\pi$ being a singular measure defined by

\begin{align}\label{eq:def-pi-sing}
\pi^{\alpha_1, \alpha_2} (\d h) = c_{1,\alpha_1} |h_1|^{-1-\alpha_1} \d h_1 
\delta_{0}(\d h_2) + c_{1,\alpha_2} |h_2|^{-1-\alpha_2} \d h_2  \delta_{0}(\d 
h_1)\,,
\end{align}

where $h=(h_1, h_2)$ and $\alpha_1, \alpha_2 \in (0,2)$. For smooth functions 
$u$ one easily 
computes $\mathcal{F}(L^{\alpha_1, \alpha_2} u)(\xi) = \big( |\xi_1|^{\alpha_1} 
+ |\xi_2|^{\alpha_2} \big) \mathcal{F}(u) (\xi)$. Since the multiplier equals 
$|\xi_1|^{\alpha_1} + |\xi_2|^{\alpha_2}$, one can identify the 
operator $L^{\alpha_1, \alpha_2}$ with $-(-\partial_{11})^{\alpha_1}
-(-\partial_{22})^{\alpha_2}$. The aim of this article is to study such 
operators with bounded measurable coefficients and to establish local 
regularity results such as Hölder regularity results. Our main auxiliary result 
is a weak Harnack inequality.

Let us briefly explain why the weak Harnack inequality is a suitable tool. The 
(strong) Harnack inequality states that there is a positive constant $c$ such 
that for every positive function $u:\R^d \to \R$ satisfying $Lu = 0$ in $B_2$ 
the estimate $u(x) \leq c u(y)$ holds true for all $x,y \in B_1$. The Harnack 
inequality is known to hold true for $L = -(-\Delta)^{\alpha/2}$, the proof 
follows from the explicit computations in \cite{Rie38}. It is known to fail in 
the case of $L^{\alpha, \alpha}: C^\infty_c(\R^2) \to C(\R)$ as in 
\eqref{eq:def-sym-nonl-op} with  the measure $\pi$ being a singular measure 
defined by \eqref{eq:def-pi-sing} with $\alpha_1 = \alpha_2 = \alpha$, cf. 
\cite{BoSz07} for a analysis based proof and \cite{BaCh10} for a proof using 
the 
corresponding jump process. As a consequence of the main result in 
\cite{DyKa15}, the weak Harnack inequality holds true in this 
setting. The main aim of the present work implies that it holds 
true even in the case $\alpha_1 \ne \alpha_2$.

We study regularity of solutions $u:\Omega \to \R$ to nonlocal equations of the 
form $\mathcal{L} u = f$ in $\Omega$, where $\mathcal{L}$ is a nonlocal 
operator of the form
\begin{align}\label{def:L}
\mathcal{L}u(x)=\lim\limits_{\epsilon\to0} \int_{\R^d\setminus 
B_{\epsilon}(x)} (u(y)-u(x))\,\mu(x,\d y) 
\end{align}
and $\Omega\subset\R^d$ is an open and bounded set. The operator is 
determined by a family of measures $(\mu(x,\cdot))_{x\in\R^d}$, which play the 
role of variable coefficients. Note that we will not assume any further 
regularity of $\mu(x, \d y)$ in the first variable than measurability and 
boundedness. Before discussing the precise assumptions on $\mu(x, \d y)$, let 
us define a family of reference measures $\ma(x,\d y)$. Given 
$\alpha_1,\dots,\alpha_d\in(0,2)$, we consider a family of measures 
$(\ma(x,\cdot))_{x\in\R^d}$ on $\R^d$ defined by 

\begin{align} \label{def:mu_axes} 
\ma(x,\d y)=\sum_{k=1}^d \Big( 
\alpha_k(2-\alpha_k) |x_k-y_k|^{-1-\alpha_k}\,  \d y_k\prod_{i\neq 
k}\delta_{\{x_i\}}(\d y_i) \Big).
\end{align}

The family $(\ma(x,\cdot))_{x\in\R^d}$ is stationary in the sense that there is 
a measure $\na(\d h)$ with $\ma(x,A) = \na(A-\{x\})$ for every $x \in \R^d$ and 
every measurable set $A \subset \R^d$. In other words, if one defines an 
operator $\mathcal{L}$ as in \eqref{def:L} with $\mu$ being replaced by 
$\ma$, then the operator is translation invariant. The measure $\ma(x,\cdot)$ 
charges only those sets that intersect one of the lines $\{x+te_k\, \colon\,  
t\in\R\}$, where $k\in\{1,\dots,d\}$. In order to deal with the anisotropy of 
the measures, we consider corresponding rectangles. Set $\alam = 
\max\{\alpha_i : i \in \{1, \ldots, d\} \}$.
\begin{definition}\label{M_r}
For $r>0$ and $x\in\R^d$ we define 
\begin{align*}
M_r(x) =\bigtimes_{k=1}^d 
\left(x_k-r^{\frac{\amax}{\alpha_k}},x_k+r^{\frac{\amax}{ \alpha_k}}\right) 
\quad \text{ and } M_r = M_r(0) \,.
\end{align*}
For $0 < r \leq 1$, the rectangle $M_r(x)$ equals the 
ball $\{ y\in\R^d \colon \mathbbm{d}(x,y)<r \}$ in the metric space 
$(\R^d,\mathbbm{d})$, where the metric $\mathbbm{d}$ is defined as follows:
\begin{equation}\label{metric}
  \mathbbm{d}(x,y)= 
\sup\limits_{k\in\{1,\dots,d\}}\left\{|x_k-y_k|^{\alpha_k/\amax}\mathds{1}_{\{
|x_k-y_k|\leq 1\}}(x,y)+ \mathds{1}_{\{|x_k-y_k|> 1\}}(x,y)\right\} \,.
\end{equation}
\end{definition}

Let us formulate and explain our main assumptions on 
$(\mu(x,\cdot))_{x\in\R^d}$. 
\begin{assumptions}\label{assum:levy-symmetry} We assume 
\begin{align}\label{eq:assum:levy}\tag{A1-a}
\sup\limits_{x\in\R^d} \int_{\R^d} (|x-y|^2\wedge 1) \mu(x,\d y) 
<\infty \,,
\end{align}
and for all measurable sets $A, B \subset \R^d$
\begin{align}\label{eq:assum:symmetry}\tag{A1-b}
\int_{A}\int_{B}\mu(x,\d y)\, \d x = \int_{B}\int_{A}\mu(x,\d 
y)\, \d x \,.
\end{align}
\end{assumptions}

Note that \eqref{eq:assum:levy} is nothing but an uniform L\'evy-integrability 
condition. It allows $\mu(x, A)$ to have some singularity for $x \in 
\overline{A}$. Condition \eqref{eq:assum:symmetry} asks for symmetry of the 
family 
$(\mu(x,\cdot))_{x\in\R^d}$. Examples of $\mu(x, \d y)$ satisfying 
these two conditions are given by  $\ma$ as in \eqref{def:mu_axes} and by 
\[ \mu_1 (x, \d y) = a(x,y) |x-y|^{-d-\alpha} \d y  \,, \] 
where $\alpha \in (0,2)$ and $a(x,y) \in [1,2]$ is a measurable symmetric 
function.

The following assumption is our main assumption. It relates $\mu(x, \d y)$ to 
the reference family $\ma(x, \d y)$. The easiest way to do this would be to 
assume that there is a constant $\Lambda \geq 1 $ such that for every $x \in 
\R^d$ and every nonnegative measurable function $f:\R^d \times \R^d \to \R$ 
\begin{align}
 \Lambda^{-1} \int f(x,y) \ma(x, \d y) \leq \int f(x,y) \mu(x, \d y) \leq 
 \Lambda \int f(x,y) \ma(x, \d y) \,.
\end{align}
We will work under a weaker condition, which appears naturally in our 
framework. For $u,v \in L^2_{loc}(\R^d)$ and $\Omega\subset\R^d$ open 
and bounded, we define 
\[ \mathcal{E}^{\mu}_{\Omega}(u,v)=\int_{\Omega}\int_{\Omega} 
(u(y)-u(x))(v(y)-v(x))\,\mu(x,\d y)\, \d x\]
and $\mathcal{E}^{\mu}(u,v)= \mathcal{E}^{\mu}_{\R^d}(u,v)$ whenever the 
quantities are finite.

\begin{assumptions}\label{assum:comparability}
There is a constant $ \Lambda \geq 1$ such that for $0 < \rho \leq 
1$, $x_0\in M_1$ and $w\in 
L^2_{loc} (\R^d)$
\begin{align}\label{eq:assum:comparability} \tag{A2}
\Lambda^{-1}\mathcal{E}^{\ma}_{M_{\rho}(x_0)}(w,w)\leq 
\mathcal{E}^{\mu}_{M_{\rho}(x_0)}(w,w) \leq 
\Lambda \mathcal{E}^{\ma}_{M_{\rho}(x_0)}(w,w). 
\end{align}
\end{assumptions}

Let us briefly discuss this assumption. Assume $a(x,y) \in [1,2]$ is 
symmetric and $\ma$ is defined as in \eqref{def:mu_axes} with 
respect to some $\alpha_1, \ldots, \alpha_d \in (0,2)$. If we define $\mu_2$ 
by $\mu_2(x, A) = \int_A a(x,y) \ma(x,\d y)$, then $\mu_2$ obviously
satisfies \autoref{assum:comparability}. If $\alpha_1 = \alpha_2 = 
\ldots = \alpha_d = \alpha$, then it is proved in \cite{DyKa15} that $\mu_1$ 
satisfies \eqref{eq:assum:comparability}. Note that comparability of 
the quadratic forms $\mathcal{E}^{\ma}(w,w)$ and $\mathcal{E}^{\mu_1}(w,w)$ 
follows from comparability of the respective multipliers.  

In general, studying \autoref{assum:comparability} is a research project in 
itself. Let us mention one curiosity. Given $x \in 
\R^d$, \autoref{assum:comparability} does not require  $\mu(x,\d y)$ to be 
singular 
with respect to the Lebesgue measure. One can construct an 
absolutely continuous measure $\nu_{\text{cusp}}$ on $\R^d$ such that for 
$\mu_3$ given by $\mu_3(x,A) = \nu_{\text{cusp}}(A - \{x\})$, 
\autoref{assum:comparability} is satisfied. Since computations are rather 
lengthy, they will be carried out in a future work.

We need one more assumption related to cut-off functions, 
\autoref{assum:cutoff} resp. \eqref{eq:assum:cutoff}. Since this assumption is 
not restrictive at all but 
rather technical, we provide it in \autoref{subsec:aux:cutoff}. The 
quadratic forms introduced above relate to integro-differential operators in 
the following way. 
Given a sufficiently nice family of measures $\mu$ (any of $\mu_a$, $\mu_1$, 
$\mu_2$, would do) and sufficiently regular functions $u,v : \R^d \to 
\R$, one has $\mathcal{E}^{\mu}(u,v) =2  \int_{\R^d} \mathcal{L} u(x) v(x) 
\d x$ with $\mathcal{L}$ as in \eqref{def:L}. That is why we will study 
solutions $u$ to $\mathcal{L} u = f$ with the help of bilinear forms like 
$\mathcal{E}^\mu$. In order to do this, we need to define appropriate 
Sobolev-type 
function spaces.

\begin{definition}\label{VHomega}
Let $\Omega\subset\R^d$ open. We define the function spaces
\begin{align} 
  V^{\mu}(\Omega|\R^d)  &= \Big\{ u:\R^d\to\R \text{ measb.} \, \colon \, 
u\bigr|_{\Omega}\in 
L^2(\Omega), (u,u)_{V^{\mu}(\Omega|\R^d)} <\infty\Big\}\,, 
\label{def:vspace} \\
 H^{\mu}_{\Omega}(\R^d) &= \Big\{ u:\R^d\to\R \text{ measb.}  \, \colon \, 
u\equiv 0 \text{ on } 
\R^d\setminus\Omega, \|u\|_{H^{\mu}_{\Omega}(\R^d)}<\infty \Big\}, 
\label{def:hspace}
\end{align}
where
\begin{align*}
(u,v)_{V^{\mu}(\Omega|\R^d)} &= \int_{\Omega}\int_{\R^d} 
(u(x)-u(y))(v(x)-v(y))\, \mu(x,\d y)\, \d x \,, \\
\|u\|_{H^{\mu}_{\Omega}(\R^d)}^2 &= \|u\|_{L^2(\Omega)}^2 + 
\int_{\R^d}\int_{\R^d} (u(y)-u(x))^2\mu(x,\d y)\,\d x \,.
\end{align*}
\end{definition}

The space $V^{\mu}(\Omega|\R^d)$ is a nonlocal analogon of the space 
$H^1(\Omega)$. Fractional regularity is required inside of $\Omega$ 
whereas in $\R^d \setminus \Omega$ only integrability is imposed. The space 
$H^{\mu}_{\Omega}(\R^d)$ is a nonlocal analogon of $H^1_0(\Omega)$. We are now
in a position to formulate our main results:

\begin{theorem}\label{theo:weakharnack} Assume \eqref{eq:assum:levy}, 
\eqref{eq:assum:symmetry}, 
\eqref{eq:assum:comparability} and \eqref{eq:assum:cutoff}.  Let $f\in 
L^q(M_1)$ 
for some $q>\max\{2,\sum_{k=1}^d\frac{1}{\alpha_k}\}$. 
Assume 
$u\in V^{\mu}(M_1\big|\R^d)$, $u\geq 0$ in $M_1$ satisfies
 \begin{align}\label{eq:supersol}
   \mathcal{E}(u,\varphi)\geq (f,\varphi) \quad \text{ for every non-negative } 
\varphi\in H^{\mu}_{M_1}(\R^d).
 \end{align}
 Then there exist $p_0\in(0,1)$, $c_1>0$, independent of $u$, such that
 \[ \inf\limits_{M_{\frac14}}u \geq c_1\left(\dashint_{M_{\frac12}} 
u(x)^{p_0}\, \d x\right)^{1/p_0} - \sup\limits_{x\in M_{\frac{15}{16}}} 
2\int_{\R^d\setminus M_1} u^{-}(z)\mu(x,\d z) - 
\|f\|_{L^{q}(M_{\frac{15}{16}})}. \]
\end{theorem}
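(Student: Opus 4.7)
My plan is to follow the classical Moser/De Giorgi scheme for weak Harnack inequalities, adapted to the anisotropic geometry encoded by the rectangles $M_r$ and the metric $\mathbbm{d}$. The proof should split into three blocks: (i) a logarithmic energy estimate for the supersolution, yielding a BMO-type bound for $\log u$; (ii) a John--Nirenberg-type argument in the metric space $(\R^d,\mathbbm{d})$ converting the log-estimate into an $L^{p_0}$ bound; (iii) a De Giorgi / Moser iteration lowering the infimum, giving an $L^{p_0}\to L^\infty$ type bound for $u^{-1}$. Throughout, \eqref{eq:assum:comparability} is what reduces everything to estimates for the model form $\mathcal{E}^{\ma}$, where anisotropic Poincaré and Sobolev inequalities on the $M_r$ should be available.

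\textbf{Step 1: log estimate.} I test \eqref{eq:supersol} with $\varphi = \eta^2(u+d)^{-1}$, where $d>0$ is a truncation parameter compensating the regions where $u$ is small or the exterior data $u^-$ lives, and $\eta$ is a cutoff function adapted to a rectangle $M_\rho(x_0) \subset M_1$. The algebraic identity
\[
(u(x)-u(y))\bigl((u(x)+d)^{-1}\eta^2(x) - (u(y)+d)^{-1}\eta^2(y)\bigr)
\]
produces a positive term comparable to $(\log\tfrac{u(x)+d}{u(y)+d})^2 (\eta(x)\wedge\eta(y))^2$ modulo a remainder controlled by $(\eta(x)-\eta(y))^2$. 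Here \eqref{eq:assum:cutoff} on cutoff functions bounds the remainder, \eqref{eq:assum:comparability} lets me work against $\ma$, and the tail and $f$ terms contribute $\sup_{x}\int_{\R^d\setminus M_1} u^-(z)\mu(x,\d z)$ and $\|f\|_{L^q}$ respectively (the latter via Hölder's inequality together with an anisotropic Sobolev embedding, using $q>\sum_k \alpha_k^{-1}$). Rescaling over all $M_\rho(x_0)\subset M_1$ yields the BMO-type estimate
\[
\dashint_{M_\rho(x_0)}\bigl|\log(u+d) - (\log(u+d))_{M_\rho(x_0)}\bigr|\,\d x \leq C.
\]

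\textbf{Step 2: passage to $L^{p_0}$.} Using the John--Nirenberg inequality in the doubling metric space $(\R^d,\mathbbm{d})$ (the rectangles $M_r$ are the balls, and doubling is clear from the product structure) I obtain that there exists $p_0>0$ such that both $(u+d)^{p_0}$ and $(u+d)^{-p_0}$ are integrable on $M_{1/2}$ with the product of their means bounded by a constant. This gives
\[
\Bigl(\dashint_{M_{1/2}} (u+d)^{p_0}\Bigr)^{1/p_0}\Bigl(\dashint_{M_{1/2}} (u+d)^{-p_0}\Bigr)^{1/p_0} \leq C.
\]

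\textbf{Step 3: Moser iteration for negative powers.} For $\beta>0$ large I plan to test with $\varphi = \eta^2 (u+d)^{-1-2\beta}$ on shrinking rectangles $M_{r_j}(x_0)$ with $r_j\searrow 1/4$, obtaining a reverse Hölder chain for $(u+d)^{-\beta}$ via an anisotropic fractional Sobolev inequality on $M_r$ of the form $\|w\|_{L^{2\kappa}(M_r)}^2 \lesssim \mathcal{E}^{\ma}_{M_r}(w,w) + r^{-2\alam}\|w\|_{L^2(M_r)}^2$ for some $\kappa>1$, again invoking \eqref{eq:assum:comparability} and \eqref{eq:assum:cutoff}. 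Iterating in $\beta\to\infty$ gives
\[
\bigl\|(u+d)^{-1}\bigr\|_{L^\infty(M_{1/4})} \leq C \Bigl(\dashint_{M_{1/2}} (u+d)^{-p_0}\Bigr)^{1/p_0},
\]
which combines with Step 2 to give $\inf_{M_{1/4}}(u+d)\geq c\bigl(\dashint_{M_{1/2}} (u+d)^{p_0}\bigr)^{1/p_0}$. Finally I choose $d = \sup_{x\in M_{15/16}}2\int_{\R^d\setminus M_1} u^-(z)\mu(x,\d z) + \|f\|_{L^q(M_{15/16})}$ and rearrange to obtain the stated inequality.

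\textbf{Main obstacle.} The heart of the argument is the anisotropic functional inequalities needed in Steps 1 and 3: a Poincaré-type inequality on $M_r$ that makes sense of the BMO estimate in the metric $\mathbbm{d}$, and a Sobolev--type embedding with the correct scaling exponent $\alam$ matching the definition of $M_r$. Because $\ma$ charges only the coordinate lines, the usual isotropic proofs do not apply, and one has to exploit the product structure of $\ma$ together with one-dimensional fractional Sobolev inequalities and Fubini, carefully tracking the scaling factors $r^{\alam/\alpha_k}$. Controlling the cross-terms between the cutoff $\eta$ and the singular anisotropic measure—without losing the right powers of $r$—is where most of the technical effort will go.
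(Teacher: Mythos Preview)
Your proposal is correct and follows essentially the same route as the paper: a log-energy estimate via the test function $\tau^2(u+d)^{-1}$, the anisotropic Poincar\'e inequality to obtain BMO control of $\log(u+d)$, John--Nirenberg in the doubling metric space $(\R^d,\mathbbm{d})$, and Moser iteration with negative exponents driven by the anisotropic Sobolev embedding (with scaling $r^{-\alam}$, not $r^{-2\alam}$). The only cosmetic difference is that the paper first replaces $u$ by $u^{+}$, absorbing the exterior negative part $u^{-}$ into a modified source $\tilde f(x)=f(x)-2\int_{M_1^c}u^{-}(y)\,\mu(x,\d y)$, and only then shifts by $\|\tilde f\|_{L^q}$, whereas you shift by $d$ from the outset; the two bookkeeping choices are equivalent.
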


As is well known, the weak Harnack inequality implies a decay of 
oscillation- result and Hölder regularity estimates for weak solutions.

\begin{theorem}\label{theo:hoelder}
Assume \eqref{eq:assum:levy}, \eqref{eq:assum:symmetry}, 
\eqref{eq:assum:comparability} and \eqref{eq:assum:cutoff}. Let $f\in 
L^q(M_1)$ 
for some $q>\max\{1,\sum_{k=1}^d \tfrac{1}{\alpha_k}\}.$ 
 Assume $u\in V^{\mu}(M_1\big|\R^d)$ satisfies
 \[ \mathcal{E}(u,\varphi)=(f,\varphi)\quad \text{for every non-negative } 
\varphi\in 
H^{\mu}_{M_1}(\R^d). \]
 Then there are $c_1\geq 1$ and $\delta\in(0,1)$, independent of $u$, such that 
for almost every $x,y\in M_{\frac{1}{2}}$
 \begin{equation}\label{Hoelder-estimate}
  |u(x)-u(y)|\leq c_1|x-y|^{\delta}\left( \|u\|_{\infty} + 
\|f\|_{L^q(M_{\frac{15}{16}})}\right).
 \end{equation}
\end{theorem}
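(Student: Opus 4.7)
The plan is to deduce \autoref{theo:hoelder} from the weak Harnack inequality (\autoref{theo:weakharnack}) via a classical oscillation-decay iteration, adapted to the anisotropic rectangles $M_r(x_0)$ and the metric $\mathbbm{d}$. Throughout, since $u$ is a weak solution, $\pm u$ are both supersolutions, so the weak Harnack inequality may be applied to $v=u-\inf u$ and to $v=\sup u-u$ whenever such a shift produces a nonnegative function on the rectangle under consideration. The Hölder estimate in the Euclidean distance will follow once continuity in $\mathbbm{d}$ is obtained, using that for $x,y\in M_{1/2}$ with $|x-y|\leq1$ one has $\mathbbm{d}(x,y)\leq |x-y|^{\alpha_{\min}/\alam}$ up to a constant, which merely reduces the Hölder exponent $\delta$ by a fixed factor.

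Fix $x_0\in M_{1/2}$ and set $r_n=\theta^n r_0$ for some $\theta\in(0,1)$ and $r_0$ small enough that $M_{r_0}(x_0)\subset M_{15/16}$. Writing $\omega_n=\osc_{M_{r_n}(x_0)}u$ and $M_n=\sup_{M_{r_n}(x_0)}u$, $m_n=\inf_{M_{r_n}(x_0)}u$, the aim is to establish
\[
\omega_{n+1}\leq \gamma\,\omega_n + C\,\theta^{n\sigma}\bigl(\|u\|_{\infty}+\|f\|_{L^q(M_{15/16})}\bigr)
\]
for some $\gamma\in(0,1)$ and $\sigma>0$. Pick the function $v=u-m_n$ or $v=M_n-u$ so that $v\geq \omega_n/2$ on at least half of $M_{r_n/2}(x_0)$ in measure; this $v$ is a nonnegative supersolution on $M_{r_n}(x_0)$. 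After rescaling $M_{r_n}(x_0)$ to $M_1$ via the anisotropic dilation $y_k\mapsto x_{0,k}+r_n^{\amax/\alpha_k}(y_k-x_{0,k})$, which leaves the class of operators invariant (both \eqref{eq:assum:levy} and \eqref{eq:assum:comparability} are preserved because $\ma$ scales compatibly with $M_r$), \autoref{theo:weakharnack} yields
\[
\inf_{M_{r_n/4}(x_0)}v\geq c_1\Bigl(\dashint_{M_{r_n/2}(x_0)}v^{p_0}\Bigr)^{1/p_0} - T_n - F_n,
\]
where $T_n$ is the tail term $\sup_{x\in M_{15r_n/16}(x_0)}2\int_{\R^d\setminus M_{r_n}(x_0)}v^-(z)\mu(x,\d z)$ and $F_n$ is a rescaled $L^q$-norm of $f$. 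The measure-theoretic lower bound on $\{v\geq \omega_n/2\}$ gives the $L^{p_0}$-average $\gtrsim \omega_n$, and hence $\omega_{n+1}\leq (1-c)\omega_n+2T_n+2F_n$.

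The crux is controlling $T_n$. On each annulus $M_{r_k}(x_0)\setminus M_{r_{k-1}}(x_0)$ with $k\leq n$ one has $v^-\leq \omega_k$ pointwise by construction, and beyond $M_{r_0}(x_0)$ the crude bound $v^-\leq \|u\|_\infty+\omega_n$ is used. Combining this with an estimate of $\mu(x,\R^d\setminus M_{r_k}(x_0))$ via \eqref{eq:assum:comparability} and the explicit form \eqref{def:mu_axes} (which decays polynomially in $r_k$ with the correct anisotropic scaling), one obtains $T_n\leq \eta\sum_{k\leq n}\theta^{\sigma(n-k)}\omega_k+C\theta^{n\sigma}\|u\|_\infty$ for an $\eta>0$ that can be made small by choosing $\theta$ small. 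Feeding this back into the oscillation inequality and applying a standard summation lemma (e.g.\ Lemma 7.1 in Giaquinta–Martinazzi-type) yields $\omega_n\leq C\theta^{n\delta}(\|u\|_\infty+\|f\|_{L^q})$, which translates into the stated Hölder estimate after converting $\mathbbm{d}$ to Euclidean distance.

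The main obstacle is precisely this tail estimate: the anisotropic measures $\ma$ make the "annular mass" $\mu(x,\R^d\setminus M_{r_k}(x_0))$ direction-dependent, and \eqref{eq:assum:comparability} only controls $\mu$ through quadratic forms on balls, not through pointwise tail mass. Circumventing this requires either testing against suitable cut-off functions (this is where \eqref{eq:assum:cutoff} enters) or exploiting the symmetry \eqref{eq:assum:symmetry} together with an integration-by-parts style argument. Once the tail term is brought under the form $\eta\sum_k\theta^{\sigma(n-k)}\omega_k$ with $\eta$ absorbable, the rest of the iteration is routine; the hypothesis $q>\sum_k 1/\alpha_k$ is exactly what makes $F_n$ summable geometrically under the rectangle scaling.
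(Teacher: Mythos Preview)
Your approach is correct and matches the paper's strategy: derive an oscillation-decay estimate from the weak Harnack inequality via annular control of the nonlocal tail, then convert from the metric $\mathbbm{d}$ to the Euclidean distance using $\mathbbm{d}(x,y)\leq|x-y|^{\alpha_{\min}/\alam}$. The paper organizes the iteration slightly differently in \autoref{theo:hoelderprep}: rather than deriving a recursion $\omega_{n+1}\leq\gamma\omega_n+\eta\sum_{k}\theta^{\sigma(n-k)}\omega_k+\ldots$ and closing it with a summation lemma, it constructs nested bounds $a_n\leq\wtu\leq b_n$ on $M_{r\Theta^{-n}}$ with the decay $b_n-a_n\leq 2\Theta^{-n\delta}$ built into the induction hypothesis, so that on each annulus the bound $|v|\leq 2\Theta^{j\delta}-1$ is read off directly and the tail sum is controlled by choosing $\delta$ small at the end. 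This avoids the summation lemma but is otherwise the same mechanism.

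Your caution about the tail mass is well placed: \eqref{eq:assum:comparability} alone does not give pointwise control of $\mu(x,\R^d\setminus M_r)$, and it is indeed \eqref{eq:assum:cutoff} that closes this gap. Applying it to a cut-off $\tau$ with $\tau\equiv1$ on the inner rectangle gives, for $x$ in that inner rectangle, $\mu(x,\R^d\setminus\supp\tau)\leq\int_{\R^d}(\tau(x)-\tau(y))^2\mu(x,\d y)$, which is then bounded by the corresponding $\ma$-quantity and computed explicitly as in \autoref{lem:cut-off}. The paper's tail computation in the proof of \autoref{theo:hoelderprep} should be read through this lens.
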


Let us discuss selected related results in the literature. 

The research in this article is strongly influenced by the the fundamental 
contributions of \cite{DEGIORGI, NASH, MOSER}  on H\"older estimates for 
weak solutions $u$ to second order equations of the form

\begin{align}\label{eq:divform-2nd-order}
\dvg(A(x)\nabla u(x))=0 
\end{align}

for uniformly positive definite and measurable coefficients 
$A(\cdot)$. In particular, \cite{MOSER} underlines the 
significance of the Harnack inequality for weak solutions to this equation. 
Note that, similarly to the present work, \eqref{eq:divform-2nd-order} is 
interpreted in the weak sense, i.e. instead of \eqref{eq:divform-2nd-order} one 
assumes

\[ \mathcal{E}^{local}(u,v) := \int A(x)\nabla u(x) \nabla v(x) \d x =0  \]

for every test function $v$. Analogous results for similar integro-differential 
equations with differentiability order $\alpha \in (0,2)$ have been studied by 
several authors and with the help of different methods. Important 
contributions include \cite{BASSLEVTRANS, KUMA, Kas09, KUUSI2, CAFFVASS, 
KASFELS, DyKa15, CHENKUMAWANG, Coz17}. These articles include operators of 
the form \eqref{def:L} with $\mu = \mu_1$ are studied and no further regularity 
assumption in $a(x,y)$ apart from boundedness is assumed. Note that, formally 
speaking, Hölder regularity estimates for fractional equations are stronger than 
the ones for local equations if the results are robust with respect to $\alpha 
\to 2-$ as in \cite{Kas09, Coz17}. Of course, there are many more results 
related to Hölder regularity estimates for solutions to integro-differential 
equations related to energy forms. The aforementioned articles serve as a good 
starting point for further results. Hölder regularity results have also been 
obtained for nonlocal equations in non-divergence form, i.e., for operators not 
generating quadratic forms. 

We comment on related regularity results if the measures are singular with 
respect to the Lebesgue measure. \cite{BaCh10} and \cite{ZHANG15} study 
regularity of solutions to systems of stochastic differential equations which 
lead to nonlocal operators in nondivergence form with singular measures 
including versions of $L^{\alpha, \alpha}$ with continuous bounded coefficients. 
These results have been extended to the case of operators  with possibly 
different values for $\alpha_i$ in \cite{JAMIL}. Assuming that the systems 
studied in \cite{BaCh10} are diagonal, \cite{KuRy17} establishes 
sharp two-sided heat kernel estimates. It is very interesting that operators of 
the form $L^{\alpha_1, \alpha_2}$ appear also in the study of random walks on 
groups driven by anisotropic measures. Results on the potential theory can be 
found in \cite{SaZh13}, \cite{SaZh15}, \cite{SaZh16}. 

The closest to our article is \cite{DyKa15} from which we borrow several ideas. 
\cite{DyKa15} establishes results similar to \autoref{theo:weakharnack} and 
\autoref{theo:hoelder} in a general framework which includes operators 
\eqref{def:L} with $\ma$ and $\mu_2$. The assumption $\alpha_1 = \alpha_2 = 
\ldots = \alpha_d$ is essential for the main results in \cite{DyKa15}. The 
main aim of the present work is to remove this restriction. This makes it 
necessary to study the anisotropic setting in detail and to develop new 
functional inequalities resp. embedding results. Luckily, the John-Nirenberg 
embedding has been established by others in the context of general metric 
measure spaces. Note that, different from \cite{DyKa15}, we allow the functions 
$u$ to be (super-)solutions for inhomogeneous equations. The additional 
right-hand side $f$ does not create substantial difficulties. 

The article is organized as follows. \autoref{sec:aux} contains auxiliary 
results like function inequalities, embedding results, and technical results 
regarding cut-off functions. In \autoref{sec:prop_weak_sol} we establish 
several intermediate results for functions $u$ satisfying 
\eqref{eq:supersol} and prove \autoref{theo:weakharnack}. 
In \autoref{sec:hoelder} we deduce \autoref{theo:hoelder}.

\section{Auxiliary results}\label{sec:aux}

The aim of this section is to provide more or less technical results needed 
later. In particular, we introduce appropriate cut-off functions, establish 
Sobolev-type embeddings and prove a Poincar\'{e} inequality in our anisotropic 
setting. Recall that we work with \autoref{assum:levy-symmetry} and 
\autoref{assum:comparability} in place.

\subsection{Cut-off functions}\label{subsec:aux:cutoff}

As mentioned above, we need to impose one further condition to 
\autoref{assum:levy-symmetry} and \autoref{assum:comparability}. We need to 
make sure that the nonlocal operator $\mathcal{L}$ resp. the quadratic forms 
behave nicely with respect to cut-off functions.  Let us explain a simple 
example first. If $r>0$ and $\tau  \in C^2_c(\overline{B_{2r}})$ with $\tau 
\equiv 1$ on $B_r$ and $\tau$ linear on $B_{2r}\setminus B_r$, then $|\nabla 
\tau| \leq cr^{-1}$ in $\R^d$ with a constant independent of $r$. Let us a 
explain a similar relation in our nonlocal anisotropic setting. Note that, in 
general, the nonlocal analogon of $|\nabla \tau(x)|^2$ is given by $\frac12 
 \int_{\R^d} (\tau(y)-\tau(x))^2 \mu(x,\d y)$. In the framework of Dirichlet 
forms, both objects are the corresponding carr\'{e} du champ operator of 
$\tau$.

Assume, for some $x_0\in M_1$, $r\in(0,1]$, $\lambda>1$, $\tau\in 
C^1(\R^d)$ is a cut-off function satisfying 
\begin{align}\label{def:cutoff_aniso}
 \begin{cases}
  \text{supp}(\tau)\subset M_{\lambda r}(x_0), \\
  \|\tau\|_\infty\leq 1, \\
  \tau\equiv 1 \text{ on } M_r(x_0), \\
  \|\partial_k \tau\|_{\infty}\leq 
\frac{2}{(\lambda^{\amax/\alpha_k}-1)r^{\amax/\alpha_k}}\, \text{ for all } 
k\in\{1,\dots,d\}.
 \end{cases}
\end{align}

\begin{figure}[ht]
 \begin{center}
 \includegraphics[width=0.6\textwidth]{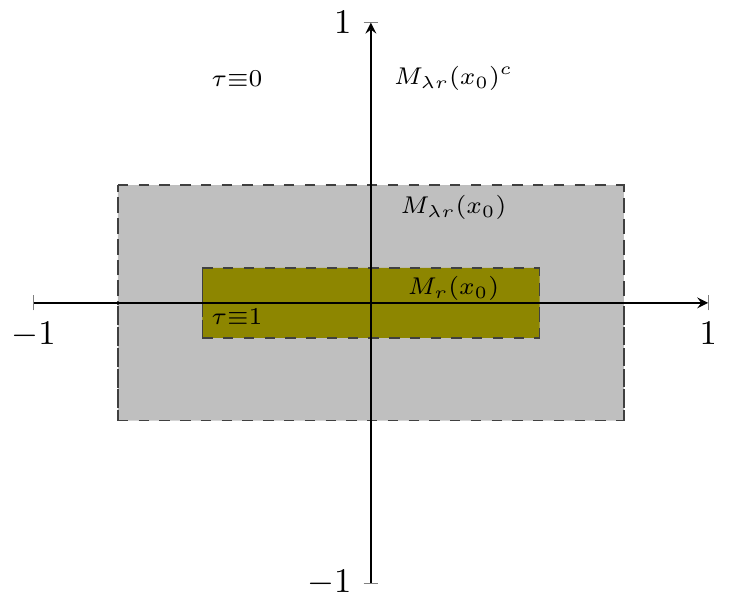} 
  \caption{Example of $\tau$ for $x_0=0$, $\alpha_1=\frac32$, 
$\alpha_2=\frac12$, $r=\frac12$, 
$\lambda=\frac32$}
  \end{center}
 \end{figure}

\begin{lemma}\label{lem:cut-off}
There is a constant $c_1>0$, independent of 
$x_0, \lambda, r, 
\alpha_1,\dots,\alpha_d$ and $\tau$, such that
\[ \sup_{x\in\R^d} \int_{\R^d} (\tau(y)-\tau(x))^2\ma(x,\d y)\leq 
c_1r^{-\amax}\left(\sum_{k=1}^d 
(\lambda^{\amax/\alpha_k}-1)^{-\alpha_k}\right). \]
\end{lemma}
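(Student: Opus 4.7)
The plan is to exploit the axial structure of the reference measure $\ma$ to reduce the estimate to a sum of one-dimensional integrals, one per coordinate direction, each of which can then be handled by splitting the domain of integration at a critical radius where the Lipschitz bound and the $L^\infty$-bound on $\tau$ balance.

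By the definition of $\ma$ in \eqref{def:mu_axes}, for every $x \in \R^d$,
\begin{align*}
\int_{\R^d} (\tau(y) - \tau(x))^2 \ma(x, \d y) = \sum_{k=1}^d \alpha_k (2-\alpha_k) \int_\R \bigl( \tau(x + h e_k) - \tau(x) \bigr)^2 |h|^{-1-\alpha_k} \d h,
\end{align*}
since $\ma(x,\cdot)$ charges only the $d$ lines through $x$ parallel to the coordinate axes. For each $k$, I would set $R_k := \tfrac{1}{2}(\lambda^{\amax/\alpha_k} - 1) r^{\amax/\alpha_k}$, so that $\|\partial_k \tau\|_\infty R_k \leq 1$ by \eqref{def:cutoff_aniso}, and split the integral at $|h| = R_k$. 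On $\{|h| \leq R_k\}$ the mean value theorem gives $(\tau(x + h e_k) - \tau(x))^2 \leq \|\partial_k \tau\|_\infty^2 h^2$, and integration against $|h|^{-1-\alpha_k}$ contributes an amount of order $\|\partial_k \tau\|_\infty^2 R_k^{2-\alpha_k} \lesssim R_k^{-\alpha_k}/(2-\alpha_k)$. On $\{|h| > R_k\}$ the bound $\|\tau\|_\infty \leq 1$ gives $(\tau(x+he_k) - \tau(x))^2 \leq 4$, so the tail integral is of order $R_k^{-\alpha_k}/\alpha_k$. After multiplying by $\alpha_k(2-\alpha_k)$ the degenerate factors cancel, and each directional contribution is bounded by an absolute constant times $R_k^{-\alpha_k}$.

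Substituting the chosen $R_k$ yields $R_k^{-\alpha_k} = 2^{\alpha_k} r^{-\amax}(\lambda^{\amax/\alpha_k}-1)^{-\alpha_k} \leq 4\, r^{-\amax}(\lambda^{\amax/\alpha_k}-1)^{-\alpha_k}$, and summing over $k \in \{1,\dots,d\}$ gives the desired bound with an absolute constant $c_1$. Uniformity in $x \in \R^d$ is automatic because the Lipschitz and $L^\infty$ bounds on $\tau$ hold globally; in particular, if $x$ is far from $\supp(\tau)$ then the lines $\{x + h e_k : h \in \R\}$ may still intersect $M_{\lambda r}(x_0)$, but the same pointwise estimates apply. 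There is no substantial obstacle; the one step that requires care is the choice of the direction-dependent splitting radius $R_k$, which must be scaled according to the anisotropic gradient bound so that the $C^1$-estimate and the $L^\infty$-estimate match at $|h| = R_k$ in each coordinate separately.
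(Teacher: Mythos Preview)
Your proposal is correct and follows essentially the same route as the paper: reduce to one-dimensional integrals via the axial structure of $\ma$, split each integral at a radius proportional to $(\lambda^{\amax/\alpha_k}-1)r^{\amax/\alpha_k}$, apply the Lipschitz bound on the near part and the $L^\infty$ bound on the tail, and observe that the factors $\alpha_k(2-\alpha_k)$ cancel the potentially degenerate denominators. The only difference is cosmetic---the paper splits at $(\lambda^{\amax/\alpha_k}-1)r^{\amax/\alpha_k}$ rather than half that value---which changes only the explicit constant.
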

\begin{proof}
 Set 
\[I_k=\left(x_k-(\lambda^{\amax/\alpha_k}-1)r^{\amax/\alpha_k},x_k+(\lambda^{
\amax/\alpha_k}-1)r^{\amax/\alpha_k}\right).\] 
 Then we have for any $x\in\R^d$
 \begin{align*}
  \int_{\R^d}& (\tau(x)-\tau(y))^2\ma(x,\d y)
   \leq \sum_{k=1}^d \left[\|\partial_k \tau \|_{\infty}^2\int_{I_k} 
\frac{\alpha_k(2-\alpha_k)}{|h|^{1+\alpha_k-2}} \,\d h + \int_{\R\setminus I_k} 
\frac{\alpha_k(2-\alpha_k)}{|h|^{1+\alpha_k}} \,\d h\right] \\
 & \leq \sum_{k=1}^d 
\left[4\alpha_k\left((\lambda^{\amax/\alpha_k}-1)r^{\amax/\alpha_k}\right)^{
-\alpha_k} + 2(2-\alpha_k)\left(\lambda^{\amax/\alpha_k}-1)
 r^{\amax/\alpha_k}\right)^{-\alpha_k}\right] \\
  & \leq 8 r^{-\amax}\left(\sum_{k=1}^d 
(\lambda^{\amax/\alpha_k}-1)^{-\alpha_k}\right).
  \end{align*}
 \end{proof}

Given $\tau$ as above, we assume that the nonlocal carr\'{e} du champ 
operator with respect to $\mu$ is uniformly dominated by the one with respect 
to $\ma$. 

\begin{assumptions}\label{assum:cutoff}
Let $x_0\in M_1$, $r\in(0,1]$ and $\lambda>1$. We assume there is a $c_1>0$, 
independent of $x_0, \lambda, r, 
\alpha_1,\dots,\alpha_d$ and $\tau$, such that
\begin{align} \label{eq:assum:cutoff}\tag{A3}
\sup_{x\in\R^d} \int_{\R^d} (\tau(y)-\tau(x))^2\mu(x,\d y) \leq 
c_1\sup_{x\in\R^d} \int_{\R^d} (\tau(y)-\tau(x))^2\ma(x,\d y).
\end{align}
\end{assumptions}

From now on, we assume that the family $\mu(x,\cdot), x \in \R^d$ always 
satisfies \autoref{assum:cutoff}. For future purposes, let us deduce a helpful 
observation. 

\begin{corollary}\label{quadrat}
Let $x_0\in M_1$, $r\in(0,1]$, $\lambda>1$ and $\tau\in C^1(\R^d)$. 
Assume $\tau$ satisfies  \eqref{def:cutoff_aniso}.
There is a constant $c_1>0$, independent of $u, x_0, \lambda, r, 
\alpha_1,\dots,\alpha_d$, such that for any $u\in \Vlrxn$
 \[ \int_{M_{\lambda r}(x_0)}\int_{\R^d\setminus M_{\lambda 
r}(x_0)}u(x)^2\tau(x)^2\, \mu(x,\d y)\, \d x \leq c_1 
r^{-\amax}\left(\sum_{k=1}^d 
(\lambda^{\amax/\alpha_k}-1)^{-\alpha_k}\right)\|u\|_{L^2(M_{\lambda 
r}(x_0)}^2. \]
\end{corollary}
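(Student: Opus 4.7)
The plan is to exploit the fact that $\tau$ is supported in $M_{\lambda r}(x_0)$, so that for any $y\in\R^d\setminus M_{\lambda r}(x_0)$ we have $\tau(y)=0$, and hence $\tau(x)^2 = (\tau(y)-\tau(x))^2$ on the domain of integration. This turns the left-hand side into an expression involving the nonlocal carr\'e du champ of $\tau$ weighted by $u^2$, which is exactly the object controlled by \eqref{eq:assum:cutoff} and \autoref{lem:cut-off}.

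More concretely, I would first rewrite
\[
\int_{M_{\lambda r}(x_0)}\int_{\R^d\setminus M_{\lambda r}(x_0)} u(x)^2\tau(x)^2\,\mu(x,\d y)\,\d x
= \int_{M_{\lambda r}(x_0)} u(x)^2 \int_{\R^d\setminus M_{\lambda r}(x_0)}(\tau(y)-\tau(x))^2\,\mu(x,\d y)\,\d x,
\]
then extend the inner integral to all of $\R^d$ (this only enlarges the integrand since it is nonnegative), and pull out a supremum:
\[
\leq \|u\|_{L^2(M_{\lambda r}(x_0))}^2 \, \sup_{x\in\R^d}\int_{\R^d}(\tau(y)-\tau(x))^2\,\mu(x,\d y).
\]

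Next I apply \eqref{eq:assum:cutoff} to dominate the $\mu$-carr\'e du champ of $\tau$ by the $\ma$-carr\'e du champ of $\tau$, and then invoke \autoref{lem:cut-off} to obtain
\[
\sup_{x\in\R^d}\int_{\R^d}(\tau(y)-\tau(x))^2\,\mu(x,\d y) \leq c\, r^{-\amax}\sum_{k=1}^d(\lambda^{\amax/\alpha_k}-1)^{-\alpha_k}.
\]
Combining these two bounds yields the claimed inequality with a constant $c_1$ that depends only on the constants in \eqref{eq:assum:cutoff} and in \autoref{lem:cut-off}, in particular independent of $u,x_0,\lambda,r$ and $\alpha_1,\dots,\alpha_d$.

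There is essentially no obstacle here: the entire argument is a bookkeeping reduction of the statement to the two auxiliary results already established. The only point worth double-checking is that $u\in V^{\mu}(M_{\lambda r}(x_0)|\R^d)$ ensures $u\in L^2(M_{\lambda r}(x_0))$, so the right-hand side is finite and the use of Fubini/positivity when extending the $y$-integral to $\R^d$ is justified.
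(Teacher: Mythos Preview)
Your proof is correct and is precisely the intended argument. The paper states the corollary without proof because it follows immediately from the support property of $\tau$, \eqref{eq:assum:cutoff} and \autoref{lem:cut-off} exactly as you describe.
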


\subsection{Sobolev-type inequalities}

One important tool in our studies will be Sobolev-type inequalities. We begin 
with a comparability result, which gives a representation of 
$(u,u)_{V^{\ma}(\R^d|\R^d)}$ in terms of the Fourier transform of $u$.

\begin{lemma}\label{multiplier}
Let $u\in V^{\ma}(\R^d\big|\R^d)$. Then there is a constant $C>1$ that depends 
only the dimension $d$ such that
 \[ C^{-1}\Big\|\widehat{u}(\xi)\Big(\sum_{k=1}^d 
|\xi_k|^{\alpha_k}\Big)^{\frac{1}{2}} \Big\|_{L^2_{\xi}(\R^d)}^2 \leq 
\mathcal{E}^{\ma}(u,u) 
  \leq C \Big\|\widehat{u}(\xi)\Big(\sum_{k=1}^d 
|\xi_k|^{\alpha_k}\Big)^{\frac{1}{2}} \Big\|_{L^2_{\xi}(\R^d)}^2 \,. \]
\end{lemma}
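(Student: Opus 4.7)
The plan is to decompose $\mathcal{E}^{\ma}(u,u)$ into $d$ one-dimensional integrals and apply Plancherel's theorem coordinate-wise. Writing out the definition of $\ma$ and using Fubini,
\[
\mathcal{E}^{\ma}(u,u) = \sum_{k=1}^d \alpha_k(2-\alpha_k) \int_{\R^d}\!\int_{\R} (u(x+h e_k)-u(x))^2 \frac{\d h}{|h|^{1+\alpha_k}}\,\d x.
\]
For fixed $k$ and $h$, translation by $he_k$ corresponds to multiplication by $e^{ih\xi_k}$ on the Fourier side, so Plancherel gives $\int_{\R^d}(u(x+he_k)-u(x))^2 \d x = 2 \int_{\R^d}(1-\cos(h\xi_k))|\widehat u(\xi)|^2 \d\xi$. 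Interchanging the $h$ and $\xi$ integrals (nonnegative integrand) and substituting $s=h\xi_k$ yields
\[
\int_{\R} \frac{1-\cos(h\xi_k)}{|h|^{1+\alpha_k}}\, \d h = |\xi_k|^{\alpha_k} I(\alpha_k), \qquad I(\alpha):=\int_{\R}\frac{1-\cos s}{|s|^{1+\alpha}}\,\d s.
\]
Thus
\[
\mathcal{E}^{\ma}(u,u) = 2 \int_{\R^d} |\widehat u(\xi)|^2 \sum_{k=1}^d \alpha_k(2-\alpha_k)\, I(\alpha_k)\, |\xi_k|^{\alpha_k}\,\d\xi.
\]

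The remaining task is to show that $\alpha(2-\alpha)\,I(\alpha)$ is bounded above and below by universal constants, uniformly in $\alpha\in(0,2)$. Split $I(\alpha)=\int_{|s|\le 1}+\int_{|s|>1}$. On $\{|s|\le 1\}$, the bounds $\tfrac{s^2}{4}\le 1-\cos s\le \tfrac{s^2}{2}$ give $\int_{|s|\le 1}(1-\cos s)|s|^{-1-\alpha}\d s\asymp (2-\alpha)^{-1}$. On $\{|s|>1\}$, the trivial bound $0\le 1-\cos s\le 2$ together with $\int_1^{\infty} s^{-1-\alpha} \d s = \alpha^{-1}$, and a lower bound via $\int_1^2 (1-\cos s)\,\d s > 0$ followed by monotonicity of $s\mapsto |s|^{-1-\alpha}$, yield $\int_{|s|>1}(1-\cos s)|s|^{-1-\alpha}\d s\asymp \alpha^{-1}$. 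Consequently $\alpha(2-\alpha)I(\alpha) \asymp (2-\alpha)+ \alpha \asymp 1$, with absolute constants.

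Combining these estimates, each summand satisfies $c\le\alpha_k(2-\alpha_k)I(\alpha_k)\le C$ for absolute $c,C>0$, so
\[
\mathcal{E}^{\ma}(u,u)\asymp \int_{\R^d}|\widehat u(\xi)|^2 \sum_{k=1}^d|\xi_k|^{\alpha_k}\,\d\xi,
\]
with comparability constants depending only on the dimension $d$ (entering only through the sum over $k$). This is exactly the stated two-sided bound.

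The main (and essentially only) obstacle is the uniform control of the constant $\alpha(2-\alpha)I(\alpha)$ as $\alpha$ approaches the endpoints $0$ and $2$, where $I(\alpha)$ blows up on opposite ends of the integration range; the weight $\alpha(2-\alpha)$ in the definition of $\ma$ is precisely what compensates these two divergences, and verifying this requires only the elementary split of $I(\alpha)$ described above.
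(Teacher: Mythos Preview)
Your approach is precisely the paper's: apply Plancherel coordinate by coordinate and reduce everything to the one-variable estimate $\alpha(2-\alpha)\int_{\R}(1-\cos s)|s|^{-1-\alpha}\,\d s \asymp 1$ uniformly in $\alpha\in(0,2)$, which the paper simply asserts without details. Your write-up is in fact more careful than the paper's.

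One technical point needs fixing, however. Your justification of the lower bound $\int_{|s|>1}(1-\cos s)|s|^{-1-\alpha}\,\d s \gtrsim \alpha^{-1}$ does not work as written: the argument ``$\int_1^2(1-\cos s)\,\d s>0$ plus monotonicity of $|s|^{-1-\alpha}$'' only yields a \emph{constant} lower bound for the tail integral, not one of order $\alpha^{-1}$. A constant lower bound is not enough: combining it with $I_1\asymp(2-\alpha)^{-1}$ gives only $\alpha(2-\alpha)I(\alpha)\gtrsim \alpha$, which degenerates as $\alpha\to 0$. To obtain the correct lower bound, use periodicity: since $\int_{2\pi n}^{2\pi(n+1)}(1-\cos s)\,\d s = 2\pi$ for every $n\geq 1$ and $|s|^{-1-\alpha}\geq (2\pi(n+1))^{-1-\alpha}$ on that interval, one gets
\[
\int_{|s|>1}\frac{1-\cos s}{|s|^{1+\alpha}}\,\d s \;\geq\; 2\pi\sum_{n\geq 1}(2\pi(n+1))^{-1-\alpha} \;\asymp\; \alpha^{-1},
\]
which is exactly what you need. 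With this correction the argument is complete.
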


\begin{proof}
By Fubini's and Plancherel's theorem,
 \begin{align*}
\mathcal{E}^{\ma}(u,u) = \sum_{k=1}^d 
\alpha_k(2-\alpha_k)\int_{\R^d}|\widehat{u}(\xi)|^2\int_{\R}\frac{(1-e^{
i\xi_kh_k})^2}{|h_k|^{1+\alpha_{k}}}\; \d h_k\; \d \xi.
 \end{align*}
Furthermore, there is a constant $c_1\geq 1$, independent of 
$\alpha_1,\dots,\alpha_d$, such that for any $k\in\{1,\dots,d\}$
\[ c_1^{-1}|\xi_k|^{\alpha_k} \leq 
\alpha_k(2-\alpha_k)\int_{\R}\frac{(1-e^{i\xi_kh_k})^2}{|h_k|^{1+\alpha_{k}}}\; 
\d h_k \leq c_1|\xi_k|^{\alpha_k}. \]
Hence the assertion follows.
\end{proof}
One important observation is the following Sobolev-type inequality. We define 
the quantity

\begin{align}\label{beta}
\beta=\sum_{j=1}^d \frac{1}{\alpha_j}.
\end{align} 

\begin{theorem}\label{sobolev-whole}
There is a constant $c_1=c_1(d,2\beta/(\beta-1))>0$ 
such that for every compactly supported $u\in V^{\ma}(\R^d\big|\R^d)$
\[ \|u\|_{L^{\frac{2\beta}{\beta-1}}(\R^d)}^2 \leq c_1 
\left(\int_{\R^d}\int_{\R^d} (u(x)-u(y))^2 \, \ma(x,\d y)\; \d x\right). \]
\end{theorem}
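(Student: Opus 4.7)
The plan is to reduce the inequality to the Fourier side via \autoref{multiplier}, prove a Nash-type inequality by a Fourier cut-off argument, and then invoke the standard Nash $\Rightarrow$ Sobolev equivalence.

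By \autoref{multiplier} it suffices to prove
\[
\|u\|_{L^{2\beta/(\beta-1)}(\R^d)}^2 \leq C \int_{\R^d} |\widehat u(\xi)|^2\, m(\xi)\, d\xi,
\]
where $m(\xi) := \sum_{k=1}^d |\xi_k|^{\alpha_k}$. As an intermediate step I would first establish the Nash-type inequality
\[
\|u\|_{L^2(\R^d)}^{2+2/\beta} \leq C \Bigl(\int_{\R^d} |\widehat u(\xi)|^2 m(\xi)\, d\xi\Bigr) \|u\|_{L^1(\R^d)}^{2/\beta}.
\]
The argument is the standard frequency split: for $R>0$, write
\[
\|u\|_{L^2}^2 = \int_{\{m(\xi)\leq R\}} |\widehat u(\xi)|^2\, d\xi + \int_{\{m(\xi)>R\}} |\widehat u(\xi)|^2\, d\xi.
\]
On the high-frequency set use $1 < m(\xi)/R$ to bound by $R^{-1}\!\int |\widehat u|^2 m(\xi)\, d\xi$. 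On the low-frequency set use $\|\widehat u\|_\infty\leq \|u\|_{L^1}$ together with the anisotropic volume bound
\[
\bigl|\{\xi\in\R^d :\, \textstyle\sum_k |\xi_k|^{\alpha_k}\leq R\}\bigr| \leq \Bigl|\prod_{k=1}^d [-R^{1/\alpha_k}, R^{1/\alpha_k}]\Bigr| = 2^d R^{\beta},
\]
which holds because $\sum_k |\xi_k|^{\alpha_k}\leq R$ forces $|\xi_k|\leq R^{1/\alpha_k}$ for every $k$. Optimizing over $R > 0$ yields the Nash inequality with effective ``dimension'' $\nu := 2\beta$.

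Finally, I would appeal to the classical equivalence of Varopoulos / Carlen--Kusuoka--Stroock type: for a symmetric Markovian Dirichlet form $\mathcal{E}$, the Nash inequality $\|v\|_2^{2+4/\nu} \leq C\,\mathcal{E}(v,v)\,\|v\|_1^{4/\nu}$ with $\nu>2$ implies the ultracontractive estimate $\|P_t\|_{L^1\to L^\infty}\leq C\, t^{-\nu/2}$, which in turn is equivalent to the Sobolev inequality $\|v\|_{L^{2\nu/(\nu-2)}}^2 \leq C\,\mathcal{E}(v,v)$. With $\nu=2\beta$ the target exponent equals $2\nu/(\nu-2) = 2\beta/(\beta-1)$, and combined with \autoref{multiplier} this yields the claim, with a constant depending only on $d$ and $2\beta/(\beta-1)$ (implicitly, $\beta>1$).

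I expect the main (minor) obstacle to be the clean invocation of the Nash-to-Sobolev step in this Fourier-defined setting and the bookkeeping of constants, since one must check that $\mathcal{E}^{\ma}$ indeed generates a symmetric Markovian semigroup. A self-contained alternative would use the explicit product heat kernel $p_t(x) = \prod_{k=1}^d g_t^{(\alpha_k)}(x_k)$ built from the one-dimensional symmetric $\alpha_k$-stable densities; the scaling $g_t^{(\alpha_k)}(0) = t^{-1/\alpha_k} g_1^{(\alpha_k)}(0)$ gives $\|p_t\|_\infty \asymp t^{-\beta}$, hence ultracontractivity directly, bypassing Nash entirely.
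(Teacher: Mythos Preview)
Your argument is correct and complete in outline, but it takes a genuinely different route from the paper. The paper works directly on the Fourier side via Lorentz spaces: it chains $\|u\|_{L^\Theta}\le C\|\widehat u\|_{L^{\Theta',2}}$ with H\"older's inequality in Lorentz spaces, writing $\widehat u = K(\xi)\cdot m(\xi)^{1/2}\widehat u(\xi)$ with $K(\xi)=m(\xi)^{-1/2}$, and then shows $K\in L^{2\beta,\infty}$ by estimating $|\{K\ge t\}|=|\{m\le t^{-2}\}|\le C t^{-2\beta}$ through a decomposition over the dominant coordinate. Your Nash-inequality step uses essentially the same level-set bound (your box estimate $|\{m\le R\}|\le 2^d R^{\beta}$ is precisely this), so the analytic core is shared; the difference is that the paper converts the weak-type information into the Sobolev inequality via Lorentz-space H\"older, while you pass through the Nash $\Rightarrow$ ultracontractivity $\Rightarrow$ Sobolev machinery of Varopoulos/Carlen--Kusuoka--Stroock. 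Your route is arguably more robust (it only needs $\mathcal{E}^{\ma}$ to be a symmetric Markovian form, which it is, being the Dirichlet form of a product of independent one-dimensional stable processes), and your alternative via the explicit product heat kernel $p_t(x)=\prod_k g_t^{(\alpha_k)}(x_k)$ with $\|p_t\|_\infty\asymp t^{-\beta}$ is the cleanest of all; the paper's route is more self-contained in that it avoids semigroup theory entirely. Both require $\beta>1$, as you note.
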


We thank A. Schikorra for discussing this result and its proof with us. We 
believe that this result has been established several times in the literature 
but we were not able to find a reference.

\begin{proof} Let $\Theta:=2\beta/(\beta-1)$. We denote the H\"older conjugate 
of $\Theta$ by $\Theta'$. Note
 \begin{equation}\label{hoelder}
\begin{aligned}
 \|u\|_{L^\Theta(\R^d)} &= \|u\|_{L^{\Theta,\Theta}(\R^d)} \leq 
c_2\|u\|_{L^{\Theta,2}(\R^d)} \leq c_3\|\widehat{u}\|_{L^{\Theta',2}(\R^d)} \\
 & \leq c_3\left\|\left(\sum_{k=1}^d 
|\xi_k|^{\alpha_k}\right)^{-\frac{1}{2}}\right\|_{L_{\xi}^{2\Theta'/(2-\Theta'),
\infty}(\R^d)} \left\|\left(\sum_{k=1}^d 
|\xi_k|^{\alpha_k}\right)^{\frac{1}{2}} 
\widehat{u}(\xi)\right\|_{L_{\xi}^2(\R^d)}.
\end{aligned}
\end{equation}
Our aim is to show 
\[K(\xi)=\left(\sum_{k=1}^d |\xi_k|^{\alpha_k}\right)^{-\frac{1}{2}}\in 
{L^{2\Theta'/(2-\Theta'),\infty}(\R^d)},\] 
which implies the assertion by \autoref{multiplier}.

Let $\xi\in\R^d$. Then there is obviously an index $i\in\{1,\dots,d\}$ such that
\[|\xi_i|^{\alpha_i} \geq |\xi_j|^{\alpha_j}\quad  \text{ for all } j\neq i.\]
Thus there is a $c_4\geq 1$, depending only on $d$, such that
 \[ c_4^{-1}|\xi_i|^{-\alpha_i/2} \leq \left(\sum_{j=1}^d 
|\xi_j|^{\alpha_i}\right)^{-1/2} = \left(|\xi_i|^{\alpha_k}\left(1+\sum_{j\neq 
i} \frac{|\xi_j|^{\alpha_j}}{|\xi_i|^{\alpha_i}}\right)\right)^{-1/2} \leq 
c_4|\xi_i|^{-\alpha_i/2}. \]
 Hence
 \begin{align*}
|\{|K(\xi)\geq t\}| & = \left|\left\{ \left|\left(\sum_{k=1}^d 
|\xi_k|^{\alpha_k}\right)^{-1/2}\right| \geq t \right\}\right|\\
& \leq \sum_{i=1}^d \left|\{(|\xi_i|^{-\alpha_i/2}\geq t) \wedge 
(|\xi_i|^{\alpha_i}\geq |\xi_j|^{\alpha_j}) \text{ for all } j\neq i\}\right|\\
& = \sum_{i=1}^d \left|\{(|\xi_i|\leq t^{-2/\alpha_i}) \wedge (|\xi_j|\leq 
|\xi_i|^{\alpha_i/\alpha_j}) \text{ for all } j\neq i\}\right|=:c_4\sum_{i=1}^d 
\eta_i.
 \end{align*}
For each $i\in\{1,\dots,d\}$, we have

\begin{align*}
 \eta_i & = 2^d \int_{0}^{t^{-2/\alpha_i}} \left(\prod_{j\neq i} 
\int_{0}^{\xi_i^{\alpha_i/\alpha_j}}\, \d \xi_j\right)\d\xi_i  = 2^d 
\int_{0}^{t^{-2/\alpha_i}} \xi_i^{\sum_{j\neq i} \frac{\alpha_i}{\alpha_j}}\, 
\d\xi_i
 = \frac{2^d}{\sum_{j\neq i}\frac{\alpha_i+\alpha_j}{\alpha_j}} 
t^{-\frac{2}{\alpha_i}\left(\sum_{j\neq i}\frac{\alpha_i}{\alpha_j}+1\right)} \\
 & \leq \frac{2^d}{d-1} t^{-2\left(\sum_{j=1}^d \frac{1}{\alpha_j}\right)}=c_5 
t^{-2\beta}.
\end{align*}

Hence, we have $K\in L^{2\beta,\infty}$, if

\begin{align*}
\frac{2\Theta'}{2-\Theta'} = 2\beta \iff \frac{2-\Theta'}{\Theta'} = 
\frac{1}{\beta} \iff \frac{1}{\Theta} = \frac{1}{2} - 
\frac{1}{2\beta}=\frac{1}{2}\left(\frac{\beta-1}{\beta}\right) \iff \Theta = 
\frac{2\beta}{\beta-1},
\end{align*}

from which the assertion follows.
\end{proof}

Note that the case $\alpha_1=\cdots=\alpha_d=\alpha\in(0,2)$ leads to 
$\beta=d/\alpha$ and therefore 
\[\Theta=\frac{2\beta}{\beta-1}=\frac{2d}{d-\alpha}\] 
in  \autoref{sobolev-whole}, which arises in the Sobolev embedding 
$H^{\alpha/2}(\R^d)\subset L^{\Theta}(\R^d)$. 

\begin{theorem}\label{sobolev-domainaxes}
Let $x_0\in M_1$, $r\in(0,1]$ and $\lambda>1$. Let $u\in V^{\ma}(M_{\lambda 
r}(x_0)\big|\R^d)$. 
Then there is a constant $c_1=c_1(d,2\beta/(\beta-1))>0$, independent 
of $x_0, \lambda, r, \alpha_1,\dots,\alpha_d$ and $u$, such that
 \begin{equation}
 \begin{aligned}
  \|u\|_{L^{\frac{2\beta}{\beta-1}}(M_r(x_0))}^2 & \leq c_1 
\Bigg(\int_{M_{\lambda r}(x_0)}\int_{M_{\lambda r}(x_0)} (u(x)-u(y))^2 \, 
\ma(x,\d y)\, \d x\\
  & \qquad \quad  +r^{-\amax}\left(\sum_{k=1}^d 
(\lambda^{\amax/\alpha_k}-1)^{-\alpha_k}\right)\|u\|^2_{L^2(M_{\lambda 
r}(x_0))}\Bigg).  
 \end{aligned}
 \end{equation}
\end{theorem}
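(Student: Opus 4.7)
The plan is to localize the global Sobolev inequality of \autoref{sobolev-whole} by multiplying $u$ with a cut-off function of the type described in \eqref{def:cutoff_aniso}. Concretely, I would fix a $\tau\in C^1(\R^d)$ satisfying \eqref{def:cutoff_aniso} for the given $x_0, r, \lambda$, observe that $u\tau$ is compactly supported in $M_{\lambda r}(x_0)$, and apply \autoref{sobolev-whole} to $u\tau$. Since $\tau\equiv 1$ on $M_r(x_0)$, one has $\|u\|_{L^{2\beta/(\beta-1)}(M_r(x_0))} \leq \|u\tau\|_{L^{2\beta/(\beta-1)}(\R^d)}$, so the task reduces to showing
\[
 \mathcal{E}^{\ma}(u\tau,u\tau) \;\leq\; c\,\mathcal{E}^{\ma}_{M_{\lambda r}(x_0)}(u,u) + c\,r^{-\amax}\Bigl(\sum_{k=1}^d (\lambda^{\amax/\alpha_k}-1)^{-\alpha_k}\Bigr)\|u\|_{L^2(M_{\lambda r}(x_0))}^2.
\]

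To prove this energy estimate I would split the double integral $\mathcal{E}^{\ma}(u\tau,u\tau)$ according to whether each of $x$, $y$ lies inside or outside $M_{\lambda r}(x_0)$. The case $x,y\notin M_{\lambda r}(x_0)$ vanishes, and by the symmetry \eqref{eq:assum:symmetry} (which holds also for $\ma$) the two mixed cases are equal. On $M_{\lambda r}(x_0)\times M_{\lambda r}(x_0)$ I would use the pointwise Leibniz-type bound
\[
 ((u\tau)(x)-(u\tau)(y))^2 \;\leq\; 2\tau(x)^2(u(x)-u(y))^2 + 2u(y)^2(\tau(x)-\tau(y))^2,
\]
so that, using $\|\tau\|_\infty\leq 1$, the first term is absorbed by $2\mathcal{E}^{\ma}_{M_{\lambda r}(x_0)}(u,u)$ while the second, after swapping $x\leftrightarrow y$ via \eqref{eq:assum:symmetry}, is bounded by $2\sup_z\int(\tau(z)-\tau(y))^2\ma(z,\d y)\cdot\|u\|_{L^2(M_{\lambda r}(x_0))}^2$ and controlled by \autoref{lem:cut-off}. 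On the mixed region $x\in M_{\lambda r}(x_0)$, $y\notin M_{\lambda r}(x_0)$ we have $\tau(y)=0$, so the integrand is $u(x)^2\tau(x)^2$, and since $\tau(x)^2 = (\tau(x)-\tau(y))^2$ in this region the contribution is bounded by $\int_{M_{\lambda r}(x_0)} u(x)^2 \int_{\R^d}(\tau(x)-\tau(y))^2\ma(x,\d y)\,\d x$, again controlled by \autoref{lem:cut-off}.

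The main technical point is ensuring that no values of $u$ outside $M_{\lambda r}(x_0)$ contaminate the final estimate, because both sides of the target inequality only involve $u|_{M_{\lambda r}(x_0)}$. The Leibniz decomposition naively produces a term $u(y)^2(\tau(x)-\tau(y))^2$ where $y$ ranges over all of $\R^d$, but the symmetry of $\ma(x,\d y)\,\d x$ under $(x,y)\leftrightarrow(y,x)$ (which follows from \eqref{def:mu_axes} since $\na$ is an even measure) allows me to transfer the $u^2$ factor onto the variable ranging over $M_{\lambda r}(x_0)$ before applying the pointwise cut-off bound. This is the only delicate bookkeeping step; once executed, combining the three contributions yields the stated inequality with a constant depending only on $d$ and $2\beta/(\beta-1)$, inherited from \autoref{sobolev-whole} and the absolute constant in \autoref{lem:cut-off}.
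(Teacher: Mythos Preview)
Your proposal is correct and follows essentially the same route as the paper: localize via a cut-off $\tau$ satisfying \eqref{def:cutoff_aniso}, apply \autoref{sobolev-whole} to $u\tau$, split $\mathcal{E}^{\ma}(u\tau,u\tau)$ into the interior piece on $M_{\lambda r}\times M_{\lambda r}$ and the mixed piece, and control both using \autoref{lem:cut-off} (the mixed piece is exactly \autoref{quadrat}). The only cosmetic differences are that the paper uses the polarization identity $2[(v\tau)(x)-(v\tau)(y)]=(v(x)-v(y))(\tau(x)+\tau(y))+(v(x)+v(y))(\tau(x)-\tau(y))$ in place of your Leibniz bound, and introduces an auxiliary extension $v\in L^2(\R^d)$ with $v=u$ on $M_{\lambda r}$ before multiplying by $\tau$; since $\supp\tau\subset M_{\lambda r}$, this extension is immaterial and your direct use of $u\tau$ is fine.
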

\begin{proof}
 Let $\tau:\R^d\to\R$ be as in \eqref{def:cutoff_aniso}. 
 For simplicity of notation we write $M_r=M_r(x_0)$. Let $v\in L^2(\R^d)$ such 
that $v\equiv u$ on $M_{\lambda r}$ and $\mathcal{E}(v,v)<\infty$. 
 
 By \autoref{sobolev-whole} there is a $c_2=c_2(d,\Theta)>0$ such that
 \begin{align*}
  \|v\tau\|_{L^{\Theta}(\R^d)}^2 & \leq  c_2 \Bigg(\int_{M_{\lambda 
r}}\int_{M_{\lambda r}} (v(x)\tau(x)-v(x)\tau(y))^2 \, \ma(x,\d y)\, \d x \\
  & \qquad \qquad + 2\int_{M_{\lambda r}}\int_{(M_{\lambda r})^c} 
(v(x)\tau(x)-v(x)\tau(y))^2 \, \ma(x,\d y)\, \d x \Bigg) \\
  & =: c_2 (I_1 + 2I_2).
 \end{align*}
We have
\begin{align*}
 I_1 & \leq \frac14 \Bigg(\int_{M_{\lambda r}}\int_{M_{\lambda r}} 
2[(v(y)-v(x))(\tau(x)+\tau(y))]^2 \, \ma(x,\d y)\, \d x\\
 & \qquad \quad + \int_{M_{\lambda r}}\int_{M_{\lambda 
r}}2[(v(x)+v(y))(\tau(x)-\tau(y))]^2 \, \ma(x,\d y)\, \d x\Bigg) \\
 & = \frac12 (J_1+J_2),
\end{align*}
Using $(\tau(x)+\tau(y))\leq 2$ for all $x,y\in M_{\lambda r}$ leads to
\[
 J_1 \leq  4\int_{M_{\lambda r}}\int_{M_{\lambda r}} (u(y)-u(x))^2\, \ma(x,\d 
y)\,\d x. \]
By $(v(x)+v(y))^2(\tau(x)-\tau(x))^2 \leq 2v(x)^2(\tau(x)-\tau(x))^2 + 
2v(y)^2(\tau(x)-\tau(x))^2$ and \autoref{lem:cut-off}, we have

\[ J_2 \leq 4\|v\|^2_{L^2(M_{\lambda r})} \sup_{x\in\R^d} \int_{\R^d} 
(\tau(y)-\tau(x))^2\ma(x,\d y) 
 \leq c_3 r^{-2}\left(\sum_{k=1}^d 
(\lambda^{\alam/\alpha_k}-1)^{-\alpha_k}\right)\|u\|^2_{L^2(M_{\lambda 
r})}. \]
Moreover, by \autoref{quadrat}
\[ I_2  \leq c_4 r^{-\amax}\left(\sum_{k=1}^d 
(\lambda^{\amax/\alpha_k}-1)^{-\alpha_k}\right)\|u\|_{L^2(M_{\lambda r})}^2.  \]
Hence there is a constant $c_1$, independent of $x_0, \lambda, r, 
\alpha_1,\dots,\alpha_d$ and $u$, such that
\begin{align*}
 & \|u\|_{L^{\Theta}(M_r)}^2 = \|v\|_{L^{\Theta}(M_r)}^2 
=\|v\tau\|_{L^{\Theta}(M_r)}^2 \leq \|v\tau\|_{L^{\Theta}(\R^d)}^2 \\
 & \leq c_1 \left(\int_{M_{\lambda r}}\int_{M_{\lambda r}} (u(x)-u(y))^2 \, 
\ma(x,\d y)\, \, \d x+r^{-\amax}\left(\sum_{k=1}^d 
(\lambda^{\amax/\alpha_k}-1)^{-\alpha_k}\right)\|u\|^2_{L^2(M_{\lambda 
r})}\right). 
\end{align*}
\end{proof}

We deduce the following corollary.

\begin{corollary}\label{cor:sobolev-local}
 Let $x_0\in M_1$ and $r\in(0,1)$. Let $\lambda\in(1,r^{-1}]$ and $u\in 
V^{\mu}(M_{\lambda r}(x_0)\big|\R^d)$. 
 Let $\Theta=2\beta/(\beta-1)$. Then there is a $c_1>0$, independent of $x_0, 
\lambda, r, \alpha_1,\dots,\alpha_d$ and $u$, but depending on $d,\Theta$, such 
that
 \begin{align*}
   \|u\|_{L^{\frac{2\beta}{\beta-1}}(M_r(x_0))}^2 & \leq c_1 
\Bigg(\int_{M_{\lambda r}(x_0)}\int_{M_{\lambda r}(x_0)} (u(x)-u(y))^2 \, 
\mu(x,\d y)\, \d x\\
  & \qquad \quad  +r^{-\amax}\left(\sum_{k=1}^d 
(\lambda^{\amax/\alpha_k}-1)^{-\alpha_k}\right)\|u\|^2_{L^2(M_{\lambda 
r}(x_0))}\Bigg).  
 \end{align*}
\end{corollary}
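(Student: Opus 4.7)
The plan is to deduce this from \autoref{sobolev-domainaxes} by converting the $\ma$-energy on the right-hand side into the $\mu$-energy via \autoref{assum:comparability}. No new functional-analytic work should be needed; the only thing to verify is that the comparability hypothesis is available in the regime specified by the corollary.

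First, I would observe that the hypothesis $\lambda \in (1, r^{-1}]$ forces $\rho := \lambda r \leq 1$, and by assumption $x_0 \in M_1$. Therefore \eqref{eq:assum:comparability} applies on the rectangle $M_{\lambda r}(x_0)$ and yields, for every $w \in L^2_{loc}(\R^d)$,
\begin{align*}
\Lambda^{-1}\,\mathcal{E}^{\ma}_{M_{\lambda r}(x_0)}(w,w) \leq \mathcal{E}^{\mu}_{M_{\lambda r}(x_0)}(w,w) \leq \Lambda\, \mathcal{E}^{\ma}_{M_{\lambda r}(x_0)}(w,w).
\end{align*}
In particular, applying this to the given $u \in V^{\mu}(M_{\lambda r}(x_0)|\R^d)$ shows that $u$ also lies in $V^{\ma}(M_{\lambda r}(x_0)|\R^d)$, because the $L^2$-part of the norm is unchanged and the energy part is controlled by $\Lambda\,\mathcal{E}^{\mu}_{M_{\lambda r}(x_0)}(u,u) < \infty$.

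Next, I would invoke \autoref{sobolev-domainaxes} for this $u$: there is a constant $c = c(d, 2\beta/(\beta-1)) > 0$, independent of $x_0, \lambda, r$ and $\alpha_1, \dots, \alpha_d$, such that
\begin{align*}
\|u\|_{L^{\frac{2\beta}{\beta-1}}(M_r(x_0))}^2 \leq c\Bigl( \mathcal{E}^{\ma}_{M_{\lambda r}(x_0)}(u,u) + r^{-\amax}\Bigl(\sum_{k=1}^d (\lambda^{\amax/\alpha_k}-1)^{-\alpha_k}\Bigr)\|u\|^2_{L^2(M_{\lambda r}(x_0))}\Bigr).
\end{align*}
Replacing $\mathcal{E}^{\ma}_{M_{\lambda r}(x_0)}(u,u)$ by $\Lambda\,\mathcal{E}^{\mu}_{M_{\lambda r}(x_0)}(u,u)$ via the upper bound of \eqref{eq:assum:comparability} and absorbing the factor $\Lambda$ into a new constant $c_1 = c_1(d, 2\beta/(\beta-1), \Lambda)$ yields exactly the stated inequality.

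There is no real obstacle here: the entire argument is one application of Theorem \ref{sobolev-domainaxes} followed by one application of \eqref{eq:assum:comparability}. The only point worth checking carefully is the compatibility of the ranges of $\rho$ and $x_0$ in the comparability hypothesis with the parameters $(\lambda, r, x_0)$ in the corollary, which is precisely what the assumption $\lambda \leq r^{-1}$ is designed to guarantee.
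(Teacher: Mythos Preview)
Your proposal is correct and follows exactly the route taken in the paper: observe that $\rho := \lambda r \leq 1$ so that \eqref{eq:assum:comparability} applies on $M_{\lambda r}(x_0)$, invoke \autoref{sobolev-domainaxes}, and bound $\mathcal{E}^{\ma}_{M_{\lambda r}(x_0)}(u,u)$ by $\Lambda\,\mathcal{E}^{\mu}_{M_{\lambda r}(x_0)}(u,u)$. The only cosmetic remark is that the inequality you use, $\mathcal{E}^{\ma} \leq \Lambda\,\mathcal{E}^{\mu}$, is the \emph{lower} bound in \eqref{eq:assum:comparability} (rewritten), not the upper one.
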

\begin{proof}
Since by assumption $\rho:=\lambda r\leq 1$, the assertion follows immediately 
by \autoref{sobolev-domainaxes} and \autoref{assum:comparability}.
\end{proof}

\subsection{Poincar\'{e} inequality}

Finally, we establish a Poincar\'{e} inequality in our setting. Let 
$\Omega\subset\R^d$ be an open and bounded set. For $f\in L^1(\Omega)$, set
\[ [f]_{\Omega} := \dashint_{\Omega} f(x) \, \d x = 
\frac{1}{|\Omega|}\int_{\Omega} f(x) \, \d x. \]
\begin{lemma}\label{poincare1}
 Let $r\in(0,1]$ and $x_0\in M_1$. Assume $v\in V^{\mu}(M_r(x_0)\big|\R^d)$. 
There exists a constant $c_1>0$, 
 independent of $x_0, r$ and $v$, such that 
 \[ \|v-[v]_{M_r(x_0)}\|_{L^2(M_r(x_0))}^2 \leq 
c_1r^{\alam}\mathcal{E}^{\mu}_{M_r(x_0)}(v,v). \]
\end{lemma}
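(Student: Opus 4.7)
The plan is to reduce the assertion to the reference measure $\ma$ via Assumption~\ref{assum:comparability} and then establish the inequality for $\ma$ by a one-dimensional slicing argument.

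By \eqref{eq:assum:comparability} (taking $w = v$ on $M_r(x_0)$), it suffices to prove
\[
\|v-[v]_{M_r(x_0)}\|_{L^2(M_r(x_0))}^2 \leq c\, r^{\alam}\, \mathcal{E}^{\ma}_{M_r(x_0)}(v,v).
\]
By translation invariance of $\ma$, I may assume $x_0 = 0$, so $M_r = I_1 \times \cdots \times I_d$ with $I_k = (-r^{\alam/\alpha_k}, r^{\alam/\alpha_k})$ of length $|I_k| = 2 r^{\alam/\alpha_k}$.

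Next I would introduce the iterated partial averages: set $w_0 = v$ and, for $1 \leq k \leq d$, let $w_k$ depend only on $(x_{k+1},\dots,x_d)$ and be the average of $v$ over the first $k$ coordinates. Then $w_d = [v]_{M_r}$ and $w_k(\tilde x_k) = \dashint_{I_k} w_{k-1}(y_k,\tilde x_k)\,\d y_k$. Writing
\[
v - [v]_{M_r} = \sum_{k=1}^d (w_{k-1} - w_k)
\]
and applying Cauchy--Schwarz reduces the task to bounding each $\|w_{k-1}-w_k\|_{L^2(M_r)}^2$ by $c\,r^{\alam}$ times the $k$-th directional term of $\mathcal{E}^{\ma}_{M_r}(v,v)$.

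For this I would invoke the one-dimensional fractional Poincar\'e inequality on the interval $I_k$ of length $\ell_k = |I_k|$: for $g \in H^{\alpha_k/2}(I_k)$,
\[
\int_{I_k}(g-[g]_{I_k})^2 \,\d x_k \leq c\,\ell_k^{\alpha_k} \int_{I_k}\int_{I_k}\frac{(g(x_k)-g(y_k))^2}{|x_k-y_k|^{1+\alpha_k}}\,\d x_k\,\d y_k,
\]
where $c$ may be chosen so that $c\cdot\alpha_k(2-\alpha_k)$ stays bounded as $\alpha_k$ runs over $(0,2)$ (this is the standard normalization, e.g.\ via the Fourier characterization); this uniformity is exactly the reason the factor $\alpha_k(2-\alpha_k)$ appears in the definition of $\ma$. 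Applied to the slice $y_k\mapsto w_{k-1}(y_k,\tilde x_k)$, and using $\ell_k^{\alpha_k} = 2^{\alpha_k} r^{\alam}$, I get after integrating over $\tilde x_k$ and multiplying by the volume $|I_1\cdots I_{k-1}|$ (which pulls in since $w_{k-1}-w_k$ does not depend on $x_1,\dots,x_{k-1}$) a bound by $c\,r^{\alam}$ times the $k$-th energy piece of $w_{k-1}$. Jensen's inequality then gives $(w_{k-1}(x_k,\tilde x_k)-w_{k-1}(y_k,\tilde x_k))^2 \leq \dashint (v(\bar z,x_k,\tilde x_k)-v(\bar z,y_k,\tilde x_k))^2 \,\d \bar z$, so the resulting expression is precisely the $k$-th summand in $\mathcal{E}^{\ma}_{M_r}(v,v)$.

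Summing over $k$ and combining with the reduction step yields the claim. The main obstacle I anticipate is the uniform control of the one-dimensional Poincar\'e constant with respect to $\alpha_k\in(0,2)$; everything else is a careful bookkeeping of scaling exponents, and the normalization $\alpha_k(2-\alpha_k)$ is built into $\ma$ precisely so that this uniformity works out, giving an $r^{\alam}$--dependence in the final constant independent of $x_0$, $r$, and $v$.
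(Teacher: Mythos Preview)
Your argument is correct. Both your proof and the paper's reduce to $\ma$ via \eqref{eq:assum:comparability}, translate to $x_0=0$, and exploit the product structure of $M_r$ to reduce to a one-dimensional estimate in each coordinate direction, picking up the factor $r^{\alam}$ from the scaling $\ell_k^{\alpha_k}=2^{\alpha_k}r^{\alam}$.

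The organisational devices differ slightly. You telescope the function via iterated partial averages, $v-[v]_{M_r}=\sum_k(w_{k-1}-w_k)$, and then quote a one-dimensional fractional Poincar\'e inequality on each slice, using Jensen afterwards to replace $w_{k-1}$ by $v$. The paper instead applies Jensen first to get $\|v-[v]_{M_r}\|_{L^2}^2\leq |M_r|^{-1}\iint_{M_r\times M_r}(v(x)-v(y))^2$, then telescopes $v(x)-v(y)$ along a polygonal chain $\ell_0(x,y)=x,\ell_1(x,y),\dots,\ell_d(x,y)=y$ that changes one coordinate at a time, and finally inserts the kernel by the elementary bound $1\leq (2r^{\alam/\alpha_k})^{1+\alpha_k}|x_k-z_k|^{-1-\alpha_k}$ on $I_k$. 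The paper's route is slightly more self-contained in that it avoids citing a one-dimensional Poincar\'e inequality and makes the uniform dependence on $\alpha_k\in(0,2)$ transparent; your route is a bit more modular. Either way the bookkeeping is identical and the constant depends only on $d$ and the $\alpha_k$ through $\gamma=\max_k(\alpha_k(2-\alpha_k))^{-1}$.
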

\begin{proof}
To simplify notation, we assume $x_0=0$. Via translation, the 
assertion follows  
for general $x_0\in\R^d$. Let 
\[ \gamma=\max\left\{ (\alpha_k(2-\alpha_k))^{-1}\colon k\in\{1,\dots,d 
\}\right\}. \]
 By Jensen's inequality
 \begin{align*}
  \|v-[v]_{M_r}\|_{L^2(M_r)}^2 & = \int_{M_r} \left(\frac{1}{|M_r|}\int_{M_r} 
(v(x)-v(y))\, \,\d y \right)^2\, \,\d x \\
  & \leq \frac{1}{|M_r|}\int_{M_r} \int_{M_r} (v(x)-v(y))^2\, \,\d y \, \,\d x 
:= J.
 \end{align*}
Let $\ell=(\ell_0(x,y),\dots,\ell_d(x,y))$ 
be a polygonal chain
connecting 
$x$ and $y$ with
\[ \ell_k(x,y) = (l^k_1,\dots,l^k_d), \quad \text{where } \begin{cases}
                                                       l^k_j = y_j, &\text{if } 
j\leq k, \\
                                                       l^k_j = x_j, &\text{if } 
j>k.
                                                      \end{cases} \]
 Then                                                     
\begin{align*}
 J  \leq \frac{d}{|M_r|}\sum_{k=1}^d \int_{M_r} \int_{M_r} 
(v(\ell_{k-1}(x,y))-v(\ell_k(x,y)))^2\, \d y \, \d x  := 
\frac{d}{|M_r|}\sum_{k=1}^d I_k.
 \end{align*}
We fix $k\in\{1,\dots,d\}$ and set $w=\ell_{k-1}(x,y) = 
(y_1,\dots,y_{k-1},x_k,\dots,x_d)$. Let $z:= 
x+y-w=(x_1,\dots,x_{k-1},y_k,\dots,y_d)$. Then
$\ell_k(x,y) = w+e_k(z_k-w_k) = (y_1,\dots,y_{k},x_{k+1},\dots,x_d)$. By 
Fubini's Theorem
\begin{align*}
 I_k & = \left(\prod_{j\neq 
k}\int_{-r^{\alam/\alpha_j}}^{r^{\alam/\alpha_j}}\,\d 
z_j\right)\int_{-r^{\alam/\alpha_1}}^{r^{\alam/\alpha_1}} 
\cdots \int_{-r^{\alam/\alpha_d}}^{r^{\alam/\alpha_d}} 
\int_{-r^{\alam/\alpha_k}}^{r^{\alam/\alpha_k}} 
(v(w)-v(w+e_k(z_k-w_k)))^2\,\\
 & \hspace*{9cm} \,\d z_k\, \d w_d \cdots \,\d w_1  \\
 & \leq 2^{d-1}r^{\sum_{j\neq k} \alam/\alpha_j} \int_{M_r} 
\int_{-r^{\alam/\alpha_k}}^{r^{\alam/\alpha_k}} 
(v(w)-v(w+e_k(z_k-w_k)))^2 
\frac{2^{1+\alpha_k}r^{\alam/\alpha_k+2}}{|x_k-z_k|^{1+\alpha_k}}\,\d 
z_k\, \d w \\ 
 & \leq 42^{d}r^{\alam\beta}r^{\alam} \int_{M_r} 
\int_{-r^{\alam/\alpha_k}}^{r^{\alam/\alpha_k}} 
(v(w)-v(w+e_k(z_k-w_k)))^2 \frac{1}{|x_k-z_k|^{1+\alpha_k}}\,\d z_k\, \d w \\ 
 & \leq \frac{4r^{\alam}}{\alpha_k(2-\alpha_k)}|M_r| \int_{M_r} 
\int_{-r^{\alam/\alpha_k}}^{r^{\alam/\alpha_k}} 
(v(w)-v(w+e_k(z_k-w_k)))^2 
\frac{\alpha_k(2-\alpha_k)}{|x_k-z_k|^{1+\alpha_k}}\,\d z_k\, \d w\\
 & \leq 4r^{\alam}\gamma|M_r| \int_{M_r} 
\int_{-r^{\alam/\alpha_k}}^{r^{\alam/\alpha_k}} 
(v(w)-v(w+e_k(z_k-w_k)))^2 
\frac{\alpha_k(2-\alpha_k)}{|x_k-z_k|^{1+\alpha_k}}\,\d z_k\, \d w.
\end{align*}
Hence there are $c_1,c_2>0$, independent of $\rho$, $v$ and $x_0$, but 
depending on $d$ and $\gamma$, such that
\begin{align*}
 &\|v -[v]_{M_r}\|_{L^2(M_r)}^2\\
 &\leq c_2r^{\alam}\sum_{k=1}^d \int_{M_r} 
\int_{-r^{\alam/\alpha_k}}^{r^{\alam/\alpha_k}} 
(v(w)-v(w+e_k(z_k-w_k)))^2 
\frac{\alpha_k(2-\alpha_k)}{|x_k-z_k|^{1+\alpha_k}}\,\d z_k\, \d w \\
 & = c_2r^{\alam} \mathcal{E}^{\ma}_{M_r}(v,v)\leq 
c_1r^{\alam}\mathcal{E}^{\mu}_{M_r}(v,v),
\end{align*}
where we used \autoref{assum:comparability} in the last inequality.
\end{proof}

\section{Properties of weak supersolutions}\label{sec:prop_weak_sol} 
\allowdisplaybreaks

In this section we prove our main auxiliary result, that is a weak Harnack 
inequality for weak
supersolutions
using the Moser iteration technique.
For this purpose we establish a Poincaré inequality and show that the 
logarithm of weak supersolutions are functions 
of bounded mean oscillation.

Let $\lambda>0$, $\Omega\subset\R^d$ be open, $u\in V^{\ma}(\Omega|\R^d)$ and 
$\Psi:\R^d\to\R^d$ be a diffeomorphism defined by
\[ \Psi(x)=\begin{pmatrix}
         \lambda^{\frac{\alam}{\alpha_1}}  & \cdots & 0 \\
         \vdots &  \ddots &  0 \\
         0  & 0 & \lambda^{\frac{\alam}{\alpha_d}}
        \end{pmatrix}x.\]
Then by change of variables, the energy form $\mathcal{E}^{\ma}_{\Omega}$ 
behaves as follows
 \[  \mathcal{E}_{\Omega}^{\ma}(u\circ\Psi,u\circ\Psi)= 
\lambda^{\alam-\alam\beta}\mathcal{E}^{\ma}_{\Psi(\Omega)}(u,
u). \]

The next lemma provides a key estimate for $\log u$.

\begin{lemma}\label{lem:morrey-log}
Let $x_0\in M_1$, $r\in(0,1]$ and $\lambda>1$. Assume $f\in L^{q}(M_{\lambda 
r}(x_0))$ 
 for some $q>2$. Assume $u\in V^{\mu}(M_{\lambda r}(x_0)\big|\R^d)$ is 
nonnegative in $\R^d$ and satisfies
 \begin{equation}\label{supersolution}
   \begin{aligned}
  \mathcal{E}^{\mu}(u,\varphi)&\geq (f,\varphi) \quad \text{for any nonnegative 
} 
\varphi\in H^{\mu}_{M_{\lambda r}(x_0)}(\R^d),\\
  u(x)&\geq \epsilon \qquad \ \text{ for almost all } x\in M_{\lambda r}(x_0) 
\text{ and some } \epsilon>0.
 \end{aligned}
 \end{equation}
There exists a constant $c_1>0$, independent of $x_0, \lambda, r, 
\alpha_1,\dots,\alpha_d$ and $u$, such that
  \begin{align*}
  \int_{M_r(x_0)}&\int_{M_r(x_0)} \left( \sum_{k=1}^\infty \frac{(\log u(y) - 
\log u(x))^{2k}}{(2k)!} \right)\mu(x,\d y)\,\d x \\
  & \leq c_1\left(\sum_{k=1}^d 
\left(\lambda^{\alam/\alpha_k}-1\right)^{-\alpha_k}\right)r^{-\alpha_{
\max}}|M_{\lambda r}(x_0)|
  +\epsilon^{-1}\|f\|_{L^{q}(M_{\lambda r}(x_0))}|M_{\lambda 
r}(x_0)|^{\frac{q}{q-1}}.
 \end{align*}
\end{lemma}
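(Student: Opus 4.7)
The plan is to choose $\varphi=\tau^{2}/u$ in the weak supersolution inequality, with $\tau\in C^{1}(\R^{d})$ a cut-off of the form \eqref{def:cutoff_aniso} satisfying $\tau\equiv1$ on $M_{r}(x_{0})$ and $\supp\tau\subset M_{\lambda r}(x_{0})$. Since $u\geq\epsilon>0$ on $M_{\lambda r}(x_{0})$, $\varphi$ is a non-negative bounded function in $H^{\mu}_{M_{\lambda r}(x_{0})}(\R^{d})$, so it is an admissible test function. Pairing with $f$ via H\"older's inequality yields
\[
-\mathcal{E}^{\mu}(u,\varphi)\leq\epsilon^{-1}\|f\|_{L^{q}(M_{\lambda r}(x_{0}))}|M_{\lambda r}(x_{0})|^{(q-1)/q}.
\]
Next I would invoke the symmetry \eqref{eq:assum:symmetry} to decompose $\mathcal{E}^{\mu}(u,\varphi)=\mathcal{I}_{1}+2\mathcal{I}_{2}$ into an interior part $\mathcal{I}_{1}$ on $M_{\lambda r}(x_{0})\times M_{\lambda r}(x_{0})$ and a tail part $\mathcal{I}_{2}$ over $M_{\lambda r}(x_{0})\times M_{\lambda r}(x_{0})^{c}$. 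Since $u,\tau^{2}/u\geq0$ and $\tau(y)=0$ outside $M_{\lambda r}(x_{0})$, the tail satisfies $\mathcal{I}_{2}\leq\int_{M_{\lambda r}(x_{0})}\int_{\R^{d}}(\tau(x)-\tau(y))^{2}\mu(x,\d y)\,\d x$, which by \eqref{eq:assum:cutoff} and \autoref{lem:cut-off} is bounded by $c_{2}|M_{\lambda r}(x_{0})|\,r^{-\alam}\sum_{k=1}^{d}(\lambda^{\alam/\alpha_{k}}-1)^{-\alpha_{k}}$.

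The core of the argument is the pointwise algebraic identity
\[
(a-b)\Big(\frac{t_{1}^{2}}{a}-\frac{t_{2}^{2}}{b}\Big)=(t_{1}-t_{2})^{2}-\frac{(t_{1}b-t_{2}a)^{2}}{ab},\qquad a,b>0,\ t_{1},t_{2}\geq0,
\]
whose second right-hand term is non-negative, giving the pointwise lower bound $-(a-b)(t_{1}^{2}/a-t_{2}^{2}/b)\geq-(t_{1}-t_{2})^{2}$. On $M_{r}(x_{0})\times M_{r}(x_{0})$, where $\tau(x)=\tau(y)=1$, the identity reduces to the exact expression $(u(y)-u(x))^{2}/(u(x)u(y))=2\bigl(\cosh(\log u(y)-\log u(x))-1\bigr)=2\sum_{k\geq1}(\log u(y)-\log u(x))^{2k}/(2k)!$. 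Combining these two facts and noting that $(t_{1}-t_{2})^{2}=0$ on $M_{r}(x_{0})^{2}$,
\[
-\mathcal{I}_{1}\geq 2L-\int_{M_{\lambda r}(x_{0})}\int_{\R^{d}}(\tau(x)-\tau(y))^{2}\mu(x,\d y)\,\d x,
\]
where $L$ denotes the left-hand side of the lemma; the correction term enjoys the same bound as $\mathcal{I}_{2}$. Substituting the tail bound, the interior lower bound, and the H\"older bound on $-\mathcal{E}^{\mu}(u,\varphi)$, and then solving for $L$, completes the proof.

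The main technical obstacle lies in the algebraic step. A more ambitious pointwise lower bound of the form $-(a-b)(t_{1}^{2}/a-t_{2}^{2}/b)\geq c\,t_{1}t_{2}\bigl(\cosh(\log(b/a))-1\bigr)-C(t_{1}-t_{2})^{2}$, which would reproduce the full $\cosh-1$ series even across the annulus $M_{\lambda r}\setminus M_{r}$, is genuinely false: the cross term $(t_{1}^{2}-t_{2}^{2})\sinh(\log(b/a))$ in the exact expansion forces an unavoidable error of order $(t_{1}-t_{2})^{2}(\cosh(\log(b/a))+1)$, and taking $t_{1}\to 0$ with $t_{2}=1$ and $b/a\to\infty$ shows that no universal constants $c>0$ and $C$ can work. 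The clean resolution is to give up on the annulus and exploit only the exact equality on $M_{r}(x_{0})\times M_{r}(x_{0})$ afforded by $\tau\equiv1$, paying just the crude price $\geq-(t_{1}-t_{2})^{2}$ outside, which integrates to a cutoff energy tamed by \eqref{eq:assum:cutoff} and \autoref{lem:cut-off}.
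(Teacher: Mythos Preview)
Your proof is correct and follows essentially the same route as the paper: the test function $\varphi=\tau^{2}/u$, the interior/tail decomposition via \eqref{eq:assum:symmetry}, the tail bound through $u\geq 0$ and \eqref{eq:assum:cutoff}, and the H\"older estimate on $(f,\varphi)$ all match. The only difference is that you spell out the algebraic identity $(a-b)(t_1^2/a-t_2^2/b)=(t_1-t_2)^2-(t_1b-t_2a)^2/(ab)$ explicitly, whereas the paper rewrites the interior integrand in the symmetric form $\tau(x)\tau(y)\bigl(\tfrac{\tau(x)u(y)}{\tau(y)u(x)}+\tfrac{\tau(y)u(x)}{\tau(x)u(y)}-\tfrac{\tau(y)}{\tau(x)}-\tfrac{\tau(x)}{\tau(y)}\bigr)$ and cites \cite[Lemma~3.3]{Kas09} for the resulting lower bound; these are the same computation. (A minor remark: your H\"older step gives the exponent $|M_{\lambda r}|^{(q-1)/q}$, which is the correct one, while the paper records $|M_{\lambda r}|^{q/(q-1)}$.)
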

\begin{proof}
We follow the lines of \cite[Lemma 4.4]{DyKa15}.

Let $\tau:\R^d\to\R$ be as in \eqref{def:cutoff_aniso}.
Then by \autoref{lem:cut-off} and \eqref{eq:assum:cutoff}, there is $c_2>0$, 
such that

\[ \sup_{x\in\R^d} \int_{\R^d} (\tau(y)-\tau(x))^2\mu(x,\d y)\leq 
c_2r^{-\alam}\left(\sum_{k=1}^d 
(\lambda^{\alam/\alpha_k}-1)^{-\alpha_k}\right). \]

For brevity, we write $M_{\lambda r}(x_0)=M_{\lambda r}$ and $M_{r}(x_0)=M_r$ 
within this proof. By definition of $\tau$ and \eqref{eq:assum:symmetry},
\begin{equation}\label{eq:gradtauest}
\begin{aligned}
\int_{\R^d}\int_{\R^d} &  \left(\tau(y) - \tau(x) \right)^2 \mu(x,\d y)\,\d x \\
& = \int_{M_{\lambda r}}\int_{M_{\lambda r}}  \left(\tau(y) - \tau(x) \right)^2 
\mu(x,\d y)\,\d x + 2 \int_{M_{\lambda  r}}\int_{M_{\lambda  r}^c} 
\left(\tau(y) - \tau(x) \right)^2 \mu(x,\d y)\,\d x \\
& \leq 2 \int_{M_{\lambda r}}\int_{\R^d} \left(\tau(y) - \tau(x) \right)^2 
\mu(x,\d y)\,\d x \\
& \leq  2 |M_{\lambda  r}| \sup\limits_{x \in \R^d} \int_{\R^d} \left(\tau(y) - 
\tau(x)\right)^2 \mu(x,\d y)\\
& \leq  c_3\left(\sum_{k=1}^d 
(\lambda^{\alam/\alpha_k}-1)^{-\alpha_k}\right)r^{-\alam} 
|M_{\lambda r}|.
\end{aligned} 
\end{equation}
Let $-\varphi(x) = - \tau^2(x) u^{-1}(x)\leq 0$. By \eqref{supersolution}, we 
deduce as in the proof 
of \cite[Lemma 3.3]{Kas09}
 \begin{align*}
(f,-\varphi) &\geq  \int_{\R^d} \int_{\R^d} \left(u(y) - u(x)\right) 
\left(\tau^2(x) u^{-1}(x) - \tau^2(y) u^{-1}(y)\right)  \mu(x,\d y)\, \d x \\
&= \int_{M_{\lambda r}}\int_{M_{\lambda r}} \tau(x)\tau(y) \left( 
\frac{\tau(x)u(y)}{\tau(y)u(x)} + \frac{\tau(y) u(x)}{\tau(x) u(y)} - 
\frac{\tau(y)}{\tau(x)}-\frac{\tau(x)}{\tau(y)} \right) \, \mu(x, \d y)\, \d x 
\\
&\quad + 2 \int_{M_{\lambda r}}\int_{M_{\lambda r}^c} \left(u(y) - u(x)\right) 
\left(\tau^2(x) u^{-1}(x) -\tau^2(y) u^{-1}(y)\right) \; \mu(x, \d y) \,\d x \\
&\geq \int_{M_r} \int_{M_r} \left( 2 \sum^\infty_{k=1} \frac{\left( 
\log u(y) - \log u(x) \right)^{2k}}{(2k)!} \right) \, \mu(x,\d y)\,\d  x \\
& \quad -  \int_{M_{\lambda r}}\int_{M_{\lambda r}} \left( \tau(x) - \tau(y) 
\right)^2 \, \mu(x,\d y)\,\d x \\
&\quad + 2 \int_{M_{\lambda r}}\int_{M_{\lambda r}^c} \left(u(y) - u(x)\right) 
\left(\tau^2(x) u^{-1}(x) - \tau^2(y) u^{-1}(y)\right) \; \mu(x,\d y)\,\d x,
\end{align*}
Using the nonnegativity of $u$ in $\R^d$, the third term on the right-hand 
side can be estimated as follows:
\begin{align*}
2 &\int_{M_{\lambda r}}\int_{M_{\lambda r}^c} \left(u(y) - u(x)\right) 
\left(\tau^2(x) u^{-1}(x)- \tau^2(y) u^{-1}(y)\right) \; \mu(x,\d y)\,\d x \\
&= 2 \int_{M_{\lambda r}}\int_{M_{\lambda r}^c} \left(u(y) - u(x)\right) 
\left(- \tau^2(x)u^{-1}(x)\right) \; \mu(x,\d y)\,\d x \\
&= 2 \int_{M_{\lambda r}} \int_{M_{\lambda r}^c} \frac{\tau^2(x)}{u(x)}  u(y) 
\, \mu(x,\d y)\,\d x - 2 \int_{M_{\lambda r}} \int_{M_{\lambda r}^c}  \tau^2(x) 
\, \mu(x,\d y) \, \,\d x \\ 
&\geq - 2 \int_{\R^d}\int_{\R^d} \left(\tau(y)-\tau(x)\right)^2 \mu(x,\d y)\,\d 
x,
\end{align*}
Therefore, by the H\"older 
inequality and $|u^{-1}| \leq\epsilon^{-1}$
\begin{equation}\label{eq:split3}
\begin{aligned}
\int_{M_r} &\int_{M_r} \left( 2 \sum^\infty_{k=1} \frac{ \left( \log u(y) - 
\log u(x) \right)^{2k}}{(2k)!} \right) \, \mu(x,\d y)\,\d x \\
&\leq 3 \int_{M_{\lambda r}}\int_{\R^d} \left( \tau(x) - \tau(y) \right)^2 \, 
\mu(x,\d y)\,\d x + (f,-\tau^2u^{-1}) \\
& \leq c_1\left(\sum_{k=1}^d 
\left(\lambda^{\alam/\alpha_k}-1\right)^{-\alpha_k}\right)r^{-2} 
|M_{\lambda r}| + \|f\|_{L^{q}(M_{\lambda r})} 
\|u^{-1}\|_{L^{q/(q-1)}(M_{\lambda r})} \\
& \leq c_1\left(\sum_{k=1}^d 
\left(\lambda^{\alam/\alpha_k}-1\right)^{-\alpha_k}\right)r^{-2} 
|M_{\lambda r}| + \epsilon^{-1}\|f\|_{L^{q}(M_{\lambda r})} |M_{\lambda 
r}|^{q/(q-1)}.
\end{aligned} 
\end{equation}
\end{proof}
It is a direct consequence of \autoref{poincare1} for $v=\log(u)$ 
and \autoref{lem:morrey-log}. 
\begin{corollary}\label{L2-estimate}
Let $x_0\in M_1$, $r\in(0,1]$ and $\lambda\in[\frac54,2]$. Let $f\in 
L^{q}(M_{2r}(x_0))$ 
 for some $q>2$. Assume $u\in V^{\mu}(M_{\lambda r}(x_0)\big|\R^d)$ is 
nonnegative in $\R^d$ and satisfies
 \begin{equation}
   \begin{aligned}
  \mathcal{E}^{\mu}(u,\varphi)&\geq (f,\varphi) \quad \text{for any nonnegative 
} 
\varphi\in H^{\mu}_{M_{\lambda r}(x_0)}(\R^d),\\
  u(x)&\geq \epsilon \qquad \ \text{ for almost all } x\in M_{2r} \text{ and } 
\epsilon>r^{\alam}\|f\|_{L^{q}(M_{\lambda r}(x_0))}.
 \end{aligned}
 \end{equation}
Then there exists a constant $c_1>0$, independent of $x_0, r$ and $u$, 
such that 
\begin{equation}
 \|\log u-[\log u]_{M_r(x_0)}\|_{L^2(M_r(x_0))}^2 \leq c_2|M_r(x_0)|.
\end{equation}
\end{corollary}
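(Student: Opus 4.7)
The plan is to derive this corollary by directly combining the Poincaré inequality of \autoref{poincare1} applied to $v = \log u$ with the series estimate of \autoref{lem:morrey-log}, exploiting the a priori lower bound $u \geq \epsilon$ to ensure that $\log u$ is well-defined and essentially bounded on $M_{\lambda r}(x_0)$.

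First, I would apply \autoref{poincare1} with $v = \log u$ on $M_r(x_0)$, which gives
\[ \|\log u - [\log u]_{M_r(x_0)}\|_{L^2(M_r(x_0))}^2 \leq c \, r^{\alpha_{\max}} \mathcal{E}^{\mu}_{M_r(x_0)}(\log u, \log u). \]
To control the energy on the right, I would invoke \autoref{lem:morrey-log} and retain only the $k=1$ term in the series $\sum_k (\log u(y) - \log u(x))^{2k}/(2k)!$, using that all terms are non-negative. This yields
\[ \tfrac{1}{2}\,\mathcal{E}^{\mu}_{M_r(x_0)}(\log u, \log u) \leq c\,\Big(\sum_{k=1}^d (\lambda^{\alpha_{\max}/\alpha_k}-1)^{-\alpha_k}\Big) r^{-\alpha_{\max}} |M_{\lambda r}(x_0)| + \epsilon^{-1}\|f\|_{L^q(M_{\lambda r}(x_0))}|M_{\lambda r}(x_0)|^{q/(q-1)}. \]

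The restriction $\lambda \in [5/4, 2]$ is what makes the first factor harmless: since $\lambda^{\alpha_{\max}/\alpha_k} - 1 \geq \lambda - 1 \geq 1/4$ uniformly over $k$ and over $\alpha_k \in (0,2)$, the sum $\sum_k (\lambda^{\alpha_{\max}/\alpha_k}-1)^{-\alpha_k}$ is bounded by an absolute constant. Combined with $|M_{\lambda r}(x_0)| \leq 2^d |M_r(x_0)|$ (because $\lambda \leq 2$), the first term, after multiplication by $r^{\alpha_{\max}}$, contributes at most $c\,|M_r(x_0)|$. For the second term, the hypothesis $\epsilon > r^{\alpha_{\max}}\|f\|_{L^q(M_{\lambda r}(x_0))}$ gives $r^{\alpha_{\max}}\epsilon^{-1}\|f\|_{L^q(M_{\lambda r}(x_0))} \leq 1$, so that this contribution is bounded by $|M_{\lambda r}(x_0)|^{q/(q-1)}$. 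Since $|M_{\lambda r}(x_0)| = 2^d(\lambda r)^{\alpha_{\max}\beta}$ is bounded above uniformly for $r \in (0,1]$ and $\lambda \in [5/4, 2]$, one has $|M_{\lambda r}(x_0)|^{q/(q-1)} \leq c\,|M_{\lambda r}(x_0)| \leq c'\,|M_r(x_0)|$.

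Putting the two estimates together delivers the desired bound $\|\log u - [\log u]_{M_r(x_0)}\|_{L^2(M_r(x_0))}^2 \leq c_2\,|M_r(x_0)|$. The argument is essentially bookkeeping; the only non-trivial ingredient is the observation that the two structural constraints in the hypothesis (the bracketing $\lambda \in [5/4, 2]$ and the threshold $\epsilon > r^{\alpha_{\max}}\|f\|_{L^q}$) are precisely calibrated so that the two terms on the right-hand side of \autoref{lem:morrey-log}, after the Poincaré scaling $r^{\alpha_{\max}}$, both collapse to a constant multiple of $|M_r(x_0)|$.
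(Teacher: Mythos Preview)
Your approach is essentially identical to the paper's: apply \autoref{poincare1} to $v=\log u$, keep the $k=1$ term from \autoref{lem:morrey-log}, and then use the constraints $\lambda\in[5/4,2]$ and $\epsilon>r^{\alam}\|f\|_{L^q}$ to collapse both terms to $c|M_r(x_0)|$. One small correction: the doubling bound $|M_{\lambda r}(x_0)|\leq 2^d|M_r(x_0)|$ is not right, since the ratio is $\lambda^{\alam\beta}$ and $\alam\beta$ can exceed $d$; the paper uses $|M_{\lambda r}|\leq 2^{2\beta}|M_r|$, which still yields a constant independent of $x_0,r,u$ and so does not affect your argument.
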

\begin{proof}
Set $M_r=M_r(x_0)$ and $M_{\lambda r}=M_{\lambda r}(x_0)$. Note
\begin{equation}\label{doubling}
  \begin{aligned}
  |M_{\lambda r}|& = \left(\prod_{k=1}^d 2 (\lambda 
r)^{\alam/\alpha_k}\right) = \lambda^{\alam\beta}2^d 
r^{\alam\beta} = \lambda^{\alam\beta} |M_r|\leq 
2^{2\beta}|M_r|, \\
   |M_{\lambda r}|^{\frac{q}{q-1}} &= \lambda^{\alam\beta\frac{q}{q-1}} 
2^{d\frac{q}{q-1}}r^{\alam\beta\frac{q}{q-1}} \leq \lambda^{4\beta} 
2^{2d}r^{\alam\beta} = 2^{4\beta+d} |M_r|, 
  \end{aligned}
\end{equation}
where we used the facts $\lambda\leq 2$, 
$r\leq 1$ and $\max\{x/(x-1)\colon x\geq 2\}=2$.\\
By \autoref{poincare1} for $v:=\log(u)$, \autoref{lem:morrey-log} and 
\eqref{doubling}, we observe
  \begin{align*}
   \|&\log u-[\log u]_{M_r}\|_{L^2(M_r)}^2 \leq 
c_1r^{\alam}\mathcal{E}_{M_r(x_0)}(\log u,\log u) \\
   & \leq 2c_1r^{\alam}\int_{M_r}\int_{M_r} \left( \sum_{k=1}^\infty 
\frac{(\log u(y) - \log u(x))^{2k}}{(2k)!} \right)\mu(x,\d y)\,\d x \\
   & \leq 2c_1r^{\alam}\left(c_3\left(\sum_{k=1}^d 
\left(\lambda^{\frac{\alam}{\alpha_k}}-1\right)^{-\alpha_k}\right)r^{
-\alam} |M_{\lambda r}| + \epsilon^{-1}\|f\|_{L^{q}(M_{\lambda r})} 
|M_{\lambda r}|^{q/(q-1)}\right) \\
   & \leq 2c_1r^2\left(c_3\left(\sum_{k=1}^d 
\left(\left(\frac{5}{4}\right)^{\frac{\alam}{\alpha_k}}-1\right)^{
-\alpha_k}\right)r^{-2} |M_{\lambda r}| + r^{-\alam} |M_{\lambda 
r}|^{q/(q-1)}\right) \\
   & \leq 2c_1\left(c_3 c_4d |M_{\lambda r}| + 2^{4\beta+d}|M_r|\right) \\
   & = 2c_1\left(c_3 c_4d 2^{2\beta}|M_{r}| + 2^{4\beta+d}|M_r|\right) = 
c_1(d,\beta)|M_r|.
  \end{align*}
 Here we have used the fact, that there is a $c_4=c_4(\alam)>0$ such that 
$\max\{(5/4)^{{\alam}/x}-1)^{-x} \colon 
x\in(0,\alam]\}\leq c_4$.
\end{proof}

A consequence of the foregoing results is the following theorem.

\begin{theorem}\label{flip-assert}
 Assume $x_0\in M_1$, $r\in(0,1]$ and $f\in L^{q}(M_{\frac{5}{4}r}(x_0))$ for 
some $q>2$. 
 Assume $u\in V^{\mu}(M_{\frac{5}{4}r}(x_0)\big|\R^d)$ is nonnegative in $\R^d$ 
and satisfies
 \begin{align*}
  \mathcal{E}^{\mu}(u,\varphi)&\geq (f,\varphi) \quad \text{for any nonnegative 
} 
\varphi\in H^{\mu}_{M_{\frac{5}{4}r}(x_0)}(\R^d),\\
  u(x)&\geq \epsilon \qquad \ \text{ for almost all } x\in M_{\frac{5}{4}r} 
\text{ and some } 
\epsilon>r^{\alam}\|f\|_{L^{q}(M_{\frac{5}{4}r}(x_0))}.
 \end{align*}
 Then there exist $\overline{p}\in(0,1)$ and $c_1>0$, independent of $x_0, r, 
u$ and $\epsilon$, such that
\begin{equation}
 \left(\dashint_{M_r(x_0)} u(x)^{\overline{p}} \; \, \d x 
\right)^{1/\overline{p}} \; \d x \leq c_1 \left(\dashint_{M_r(x_0)} 
u(x)^{-\overline{p}} \; \d x \right)^{-1/\overline{p}}.
\end{equation}
\end{theorem}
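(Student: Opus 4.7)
The overall strategy is to use Corollary \ref{L2-estimate} to show that $\log u$ lies in a $\mathrm{BMO}$ class on $M_r(x_0)$ with respect to the anisotropic metric $\mathbbm{d}$, to invoke a John--Nirenberg inequality in order to obtain exponential integrability of $\log u$ around its mean, and then to conclude by the classical Moser flipping argument comparing the mean of $u^{\overline{p}}$ with that of $u^{-\overline{p}}$.

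\emph{BMO at every sub-scale.} For $y \in M_r(x_0)$ and $\rho > 0$ small enough that $M_{5\rho/4}(y) \subset M_{5r/4}(x_0)$, the hypotheses of Corollary \ref{L2-estimate} (with $\lambda = 5/4$) are inherited from those of the theorem: the supersolution inequality restricts to the smaller rectangle because $H^{\mu}_{M_{5\rho/4}(y)}(\R^d) \subset H^{\mu}_{M_{5r/4}(x_0)}(\R^d)$; the pointwise bound $u \geq \epsilon$ persists; and $\rho^{\alam} \|f\|_{L^q(M_{5\rho/4}(y))} \leq r^{\alam} \|f\|_{L^q(M_{5r/4}(x_0))} < \epsilon$. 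Corollary \ref{L2-estimate} therefore yields $\dashint_{M_\rho(y)} |\log u - [\log u]_{M_\rho(y)}|^2 \, \d x \leq c$ with a constant $c$ independent of $y$ and $\rho$, and by Cauchy--Schwarz this makes $\log u$ an element of $\mathrm{BMO}(M_r(x_0))$ with respect to $\mathbbm{d}$-balls, with seminorm bounded by a universal constant.

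\emph{John--Nirenberg and flipping.} A direct count gives $|M_\rho(y)| = 2^d \rho^{\alam \beta}$, so $(\R^d, \mathbbm{d}, \d x)$ is doubling with constant $2^{\alam \beta}$. Applying a version of the John--Nirenberg inequality valid in doubling metric measure spaces produces $\overline{p} \in (0,1)$ and $C > 0$ such that
\[
\dashint_{M_r(x_0)} e^{\overline{p} |\log u - [\log u]_{M_r(x_0)}|} \, \d x \leq C.
\]
Writing $m := [\log u]_{M_r(x_0)}$, the elementary bound $e^{\pm\overline{p}(\log u - m)} \leq e^{\overline{p}|\log u - m|}$ yields
\[
\dashint_{M_r(x_0)} u^{\overline{p}} \, \d x \leq C e^{\overline{p} m}, \qquad \dashint_{M_r(x_0)} u^{-\overline{p}} \, \d x \leq C e^{-\overline{p} m}.
\]
Multiplying these two inequalities and taking $1/\overline{p}$-th roots gives the claim with $c_1 = C^{2/\overline{p}}$.

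\emph{Main obstacle.} The non-routine ingredient is the John--Nirenberg step, which requires a formulation adapted to the anisotropic $\mathbbm{d}$-balls $M_\rho(y)$ rather than to Euclidean balls. As noted in the Introduction, this is available from the theory of $\mathrm{BMO}$ on doubling metric measure spaces; verifying the doubling property of $(\R^d, \mathbbm{d}, \d x)$, done in the previous step, is the only input needed. A secondary technicality, easily verified, is that the sub-ball condition $M_{5\rho/4}(y) \subset M_{5r/4}(x_0)$ in Step 1 suffices to cover a family of $\mathbbm{d}$-balls rich enough for the John--Nirenberg conclusion on $M_r(x_0)$.
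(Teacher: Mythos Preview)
Your proposal is correct and follows essentially the same approach as the paper: establish $\log u \in \mathrm{BMO}(M_r(x_0))$ via Corollary~\ref{L2-estimate} applied on all sub-rectangles, verify the doubling property of $(\R^d,\mathbbm{d},\d x)$, invoke the John--Nirenberg inequality for doubling metric measure spaces, and finish with the Moser flipping argument. The paper carries out the same steps, citing \cite[Theorem 19.5]{HKM06} for the John--Nirenberg step and writing out the Cavalieri-type computation for exponential integrability explicitly.
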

\begin{proof}  
This proof follows the proof of \cite[Lemma 4.5]{DyKa15}.

The main idea is to prove $\log u \in$ BMO$(M_r(x_0))$ and use the 
John-Nirenberg inequality for doubling metric measure spaces. Let $x_0\in M_1$ 
and $r\in(0,1]$.
Endowed with the Lebesgue measure, the metric measure space
$(M_r(x_0),d,\d x)$ is a doubling space. 
Let $z_0 \in M_r(x_0)$ and $\rho > 0$ such that $M_{2\rho}(z_0) \subset 
M_{r}(x_0)$. 
Note that by \eqref{doubling} $|M_{2\rho}|^{\frac{q}{q-1}} \leq 
2^{4\beta+d}|M_{\rho}|$. 
\autoref{L2-estimate} and the H\"older inequality imply
\begin{align*}
\int_{M_{\rho}(z_0)} \Big|\log u(x) - [\log u]_{M_{\rho}(z_0)}  \Big| \; \d x& 
\leq \|\log u - [\log u]_{M_{\rho}(z_0)} \|_{L^2(M_{\rho}(z_0))} 
\sqrt{|M_{\rho}|} \\
& \leq c_2|M_{\rho}|.
\end{align*}
This proves $\log u \in$ BMO$(M_r(x_0))$. The John-Nirenberg inequality 
\cite[Theorem 19.5]{HKM06} states, that $\log u \in$ BMO$(M_r(x_0))$, iff for 
each  $M_{\rho}\Subset 
M_r(x_0)$ and $\kappa >0$
\begin{equation}\label{John-Nierenberg}
 |\{ x\in M_{\rho} \colon |\log u(x)-[\log u]_{M_{\rho}}|>\kappa \}|\leq 
c_3e^{-c_4 \kappa}|M_{\rho}|, 
\end{equation}
where the positive constants $c_3,c_4$ and the BMO norm depend only on each 
other, the dimension $d$ and the doubling constant.\\
By Cavalieri's principle, we have for $h:M_R(x_0)\to[0,\infty]$, using the 
change of variable $t=e^{\kappa}$, that
\begin{align*}
 \dashint_{M_r(x_0)} e^{h(x)} \, \, \d x &  = \frac{1}{|M_r|}\Bigg(\int_0^1 
|\{x\in M_r(x_0) \colon e^{h(x)}>t\}|\; \d t 
\\
 &  \hspace*{3cm}+\int_1^\infty |\{x\in M_r(x_0) \colon e^{h(x)}>t\}|\, \d 
t\Bigg) \\	
 & \leq 1+\frac{1}{|M_r|}\int_0^\infty e^{\kappa}|\{x\in M_r(x_0) \colon 
h(x)>\kappa\}|\; \d \kappa.
\end{align*}
Let $\overline{p}\in(0,1)$ be chosen such that $\overline{p}<c_4$. The 
application of \eqref{John-Nierenberg} implies
\begin{align*}
 \dashint_{M_r(x_0)} & \exp\left(\overline{p} |\log u(y) - [\log 
u]_{M_r(x_0)}|\right) \; \d y \\
 & \leq 1+\int_0^\infty e^{\kappa}\frac{|\{x\in M_r(x_0) \colon |\log u(x) - 
[\log u]_{M_r(x_0)}|>\kappa/\overline{p}\}|}{|M_r|}\; \d \kappa \\
 & \leq 1+\int_0^\infty e^{\kappa}\frac{c_3 e^{-c_4 \kappa/\overline{p}} 
|M_r(x_0)|}{|M_r|}\; \d \kappa \\
 & \leq 1+c_3\int_0^\infty e^{(1-c_4/\overline{p})\kappa}\; \d \kappa \\
 & = 1+\frac{c_3}{c_4/\overline{p}-1} = \frac{c_4-\overline{p} + 
c_3\overline{p}}{c_4-\overline{p}}=:c_5<\infty.
\end{align*}
Hence
\begin{align*}
 & \left(\dashint_{M_r(x_0)}u(y)^{\overline{p}} \; \d y\right) 
\left(\dashint_{M_r(x_0)}u(y)^{-\overline{p}} \; \d y\right) \\
 &= \left(\dashint_{M_r(x_0)}e^{\overline{p}(\log u(y) - [\log u]_{M_r})} \; \d 
y\right) \left(\dashint_{M_r(x_0)}e^{-\overline{p}(\log u(y) - [\log u]_{M_r})} 
\; \d y\right) \leq c_5^2=c_1.
\end{align*}
\end{proof}

\subsection{The weak Harnack inequality}\label{subsec:weakharnack} 
\allowdisplaybreaks

In this subsection we prove the weak Harnack inequality 
\autoref{theo:weakharnack}, using
the Moser iteration technique for negative exponents.

 Let $\Omega\subset\R^d$ be open and bounded. Let $q>\beta$ and $f\in 
L^\frac{\beta}{\beta-1}(\Omega)$. Then Lyapunov's inequality implies for any 
$a>0$
\begin{equation}\label{inequal}
 \left\|f\right\|_{L^{\frac{q}{q-1}}(\Omega)} \leq 
\frac{\beta}{q}a\left\|f\right\|_{L^{\frac{\beta}{\beta-1}}(\Omega)} + 
\frac{q-\beta}{q}a^{-\beta/(q-\beta)}\left\|f\right\|_{L^1(\Omega)}.
\end{equation}
\begin{lemma}\label{prop:ab}
 There exist positive constants $c_1,c_2>0$ such that for every $a,b>0$, $p>1$ 
and $0\leq \tau_1,\tau_2\leq 1$ the following is true:
 \[(b-a)(\tau_1^2a^{-p}-\tau_2^2b^{-p})\geq 
 c_1 \left(\tau_1a^{\frac{-p+1}{2}}-\tau_2b^{\frac{-p+1}{2}}\right)^2 
- \frac{c_2 p}{p-1}(\tau_1-\tau_2)^2(b^{-p+1}+a^{-p+1}) \,.\]
\end{lemma}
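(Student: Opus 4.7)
The plan is to prove this algebraic inequality by reducing to the symmetric diagonal case $\tau_1=\tau_2$ and then controlling the cross terms with Young's inequality. First, I observe that both sides of the inequality are invariant under the simultaneous swap $(a,\tau_1)\leftrightarrow(b,\tau_2)$, so I may assume $b\geq a>0$ without loss of generality.

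In the symmetric case $\tau_1=\tau_2$ the bad term vanishes, and the desired estimate reduces to
\[
(b-a)(a^{-p}-b^{-p}) \;\geq\; c_1\bigl(a^{(1-p)/2}-b^{(1-p)/2}\bigr)^2.
\]
This follows at once from the fundamental theorem of calculus identity $a^{(1-p)/2}-b^{(1-p)/2}=\tfrac{p-1}{2}\int_a^b t^{-(p+1)/2}\,\d t$ and the Cauchy--Schwarz inequality, which together give
\[
\bigl(a^{(1-p)/2}-b^{(1-p)/2}\bigr)^2 \leq \tfrac{(p-1)^2}{4}(b-a)\int_a^b t^{-p-1}\,\d t = \tfrac{(p-1)^2}{4p}(b-a)(a^{-p}-b^{-p}).
\]

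For the general case I use the twin algebraic decompositions
\[
\tau_1^2 a^{-p}-\tau_2^2 b^{-p}=\tau_1\tau_2\bigl(a^{-p}-b^{-p}\bigr)+(\tau_1-\tau_2)\bigl(\tau_1 a^{-p}+\tau_2 b^{-p}\bigr)
\]
and, with $A=a^{(1-p)/2}$, $B=b^{(1-p)/2}$,
\[
(\tau_1 A-\tau_2 B)^2=\tau_1\tau_2(A-B)^2+(\tau_1-\tau_2)(\tau_1 A^2-\tau_2 B^2).
\]
Multiplying the first decomposition by $b-a$ and applying the symmetric estimate to the $\tau_1\tau_2(b-a)(a^{-p}-b^{-p})$ summand produces the target term $c_1(\tau_1 A-\tau_2 B)^2$ via the second identity, at the price of a cross term proportional to $(\tau_1-\tau_2)$ times linear combinations of $\tau_i a^{\bullet}$ and $\tau_i b^{\bullet}$.

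These cross terms are then controlled by Young's inequality $|xy|\leq\eps x^2+\tfrac{1}{4\eps}y^2$, which splits each product into a small multiple of $(\tau_1 A-\tau_2 B)^2$ that can be reabsorbed and a larger multiple of $(\tau_1-\tau_2)^2(a^{1-p}+b^{1-p})$ that feeds into the bad term. The main obstacle is the bookkeeping in this final step: the Young parameter $\eps$ must be chosen small enough that the absorbed portion leaves a strictly positive coefficient in front of the good term, while the resulting factor $\tfrac{1}{4\eps}$ must remain dominated by $\tfrac{c_2 p}{p-1}$. This bound explains the form of the coefficient $\tfrac{p}{p-1}$ in the bad term, which is inherited from the factor $\tfrac{(p-1)^2}{4p}$ produced by Cauchy--Schwarz and balances the degeneration of the transformation $t\mapsto t^{-p}$ as $p\to 1^+$.
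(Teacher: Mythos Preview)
The paper does not prove this lemma; it simply refers the reader to the published version of \cite{DyKa15}, so there is no in-paper argument to compare against.

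Your strategy---reduce to the diagonal case, handle that by Cauchy--Schwarz on $\int_a^b t^{-(p+1)/2}\,\d t$, and then control the off-diagonal cross terms via Young---is the natural one, and the two algebraic decompositions you write down are correct. There is, however, a real obstruction that your sketch does not resolve. The Cauchy--Schwarz step gives
\[
(b-a)(a^{-p}-b^{-p}) \;\geq\; \frac{4p}{(p-1)^2}\,\bigl(a^{(1-p)/2}-b^{(1-p)/2}\bigr)^2,
\]
and $\tfrac{4p}{(p-1)^2}\to 0$ as $p\to\infty$. Since the choice $\tau_1=\tau_2$ annihilates the correction term $-\tfrac{c_2 p}{p-1}(\tau_1-\tau_2)^2(a^{1-p}+b^{1-p})$, no amount of later bookkeeping with Young's inequality can recover a $p$-independent $c_1$. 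In fact the inequality as printed here cannot hold with a universal $c_1>0$: take $\tau_1=\tau_2=1$, $a=1$, $b=1+\tfrac1p$; then $(b-a)(a^{-p}-b^{-p})\sim (1-e^{-1})/p\to 0$ while $\bigl(a^{(1-p)/2}-b^{(1-p)/2}\bigr)^2\to (1-e^{-1/2})^2>0$. Your closing paragraph locates the degeneration at $p\to 1^+$, but the actual failure is at $p\to\infty$.

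What your argument does establish is the inequality with $c_1$ replaced by a constant of order $1/p$ for large $p$. This is harmless for the downstream Moser iteration in \autoref{iteration}: the extra factor of $p_n$ enters the iteration constant only through a product of the type $\prod_n p_n^{\,1/p_n}$, which converges because $(p_n)$ grows geometrically.
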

A proof of \autoref{prop:ab} can be found in the published version of 
\cite{DyKa15}.

\begin{lemma}\label{abschfuermoser}
 Assume $x_0\in M_1$ and $r\in[0,1)$. Moreover, let 
$\lambda\in(1,\min\{r^{-1},\sqrt{2}\})$ and $f\in L^{q}(M_{\lambda r}(x_0))$ 
for some $q>\max\{2,\beta\}$. 
 Assume $u\in V^{\mu}(M_{\lambda r}(x_0)\big|\R^d)$ satisfies
 \begin{align*}
  \mathcal{E}(u,\varphi)&\geq (f,\varphi) \quad \text{for any nonnegative } 
\varphi\in 
H^{\mu}_{M_{\lambda r}(x_0)}(\R^d),\\
  u(x)&\geq \epsilon \qquad \ \text{ for a.a. } x\in M_{\lambda r}(x_0) \text{ 
and some } \epsilon>\|f\|_{L^q(M_{\lambda r}(x_0))}r^{\alam(q-\beta)/q}.
 \end{align*}
Then for any $p>1$, there is a $c_1>0$ independent of $u, x_0, r, p, 
\alpha_1,\dots,\alpha_d$ and $\epsilon$, such that
\begin{align*}
  &\left\| u^{-1} \right\|_{L^{(p-1)\frac{\beta}{\beta-1}}(M_r(x_0))}^{p-1} 
\leq c_1\frac{p}{p-1}\left(\sum_{k=1}^d 
(\lambda^{\alam/\alpha_k}-1)^{-\alpha_k}\right)r^{-\alam}\left\|u^{-1}\right\|_{
L^{p-1}
(M_{\lambda r}(x_0))}^{p-1}.
\end{align*}
 \end{lemma}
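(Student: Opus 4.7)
The plan is to perform the negative-exponent Moser iteration step in the anisotropic setting, following the strategy of \cite[Lemma~4.6]{DyKa15}. I would first take the test function $\varphi=\tau^2u^{-p}$, where $\tau\in C^1(\R^d)$ satisfies \eqref{def:cutoff_aniso} for the triple $(x_0,r,\lambda)$. Since $u\geq \epsilon>0$ on $M_{\lambda r}(x_0)$ and $\supp\tau\subset M_{\lambda r}(x_0)$, $\varphi$ is admissible in $H^{\mu}_{M_{\lambda r}(x_0)}(\R^d)$. Rewriting the supersolution inequality as
\[
(f,-\varphi)\ \geq\ \int_{\R^d}\int_{\R^d}(u(y)-u(x))\bigl(\tau^2(x)u^{-p}(x)-\tau^2(y)u^{-p}(y)\bigr)\mu(x,\d y)\,\d x,
\]
I split the double integral into $M_{\lambda r}(x_0)^2$ and the two off-diagonal pieces. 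On the diagonal, \autoref{prop:ab} applied with $a=u(x)$, $b=u(y)$, $\tau_1=\tau(x)$, $\tau_2=\tau(y)$ extracts the nonlocal carr\'{e} du champ $\mathcal{E}^{\mu}_{M_{\lambda r}(x_0)}(\tau u^{(1-p)/2},\tau u^{(1-p)/2})$ up to a defect weighted by $p/(p-1)$; the off-diagonal pieces, equal by \eqref{eq:assum:symmetry}, simplify via the vanishing of $\tau$ on $M_{\lambda r}(x_0)^c$ and the pointwise lower bound $(u(y)-u(x))\tau^2(x)u^{-p}(x)\geq -\tau^2(x)u^{1-p}(x)$ that follows from $u\geq 0$.

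All resulting cut-off error terms are controlled by \autoref{lem:cut-off}, \eqref{eq:assum:cutoff} and a weighted analog of \autoref{quadrat} (with weight $u^{1-p}$ in place of $u^2$), producing
\[
c\,\mathcal{E}^{\mu}_{M_{\lambda r}(x_0)}(\tau u^{(1-p)/2},\tau u^{(1-p)/2})\leq (f,-\varphi) + \frac{c\,p}{p-1}\,r^{-\amax}\Bigl(\sum_{k=1}^d(\lambda^{\amax/\alpha_k}-1)^{-\alpha_k}\Bigr)\|u^{-1}\|_{L^{p-1}(M_{\lambda r}(x_0))}^{p-1}.
\]
Applying the anisotropic Sobolev inequality \autoref{cor:sobolev-local} to the compactly supported function $\tau u^{(1-p)/2}$, using $\tau\equiv 1$ on $M_r(x_0)$, together with the identity $\|u^{(1-p)/2}\|_{L^{2\beta/(\beta-1)}(M_r(x_0))}^2=\|u^{-1}\|_{L^{(p-1)\beta/(\beta-1)}(M_r(x_0))}^{p-1}$, converts this into the desired LHS of the claim, modulo the $(f,-\varphi)$ contribution.

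To handle the inhomogeneous term I would use $u^{-p}\leq \epsilon^{-1}u^{-(p-1)}$ and H\"older to obtain $(f,-\varphi)\leq \epsilon^{-1}\|f\|_{L^q(M_{\lambda r}(x_0))}\|u^{-(p-1)}\|_{L^{q/(q-1)}(M_{\lambda r}(x_0))}$. Applying the Lyapunov interpolation \eqref{inequal} to $u^{-(p-1)}$ with parameter $a\sim \delta\,r^{\amax(q-\beta)/q}$ and invoking the hypothesis $\epsilon>\|f\|_{L^q(M_{\lambda r}(x_0))}r^{\amax(q-\beta)/q}$ produces, for arbitrarily small $\delta>0$, a bound of the form $\delta\|u^{-1}\|_{L^{(p-1)\beta/(\beta-1)}(M_{\lambda r}(x_0))}^{p-1}+C(\delta)r^{-\amax}\|u^{-1}\|_{L^{p-1}(M_{\lambda r}(x_0))}^{p-1}$; crucially, $\epsilon$ drops out. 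The main obstacle is the mismatch between the outer ball $M_{\lambda r}(x_0)$ appearing in this Lyapunov remainder and the inner ball $M_r(x_0)$ on the LHS of the claim, which prevents a direct absorption; I would overcome it by running the preceding argument with a nested family of cut-offs on intermediate radii $\rho\in(r,\lambda r)$ and invoking a Giaquinta--Giusti style iteration lemma to swallow the high-norm term, with the geometric loss swept into the factor $\sum_{k=1}^d(\lambda^{\amax/\alpha_k}-1)^{-\alpha_k}$ on the RHS.
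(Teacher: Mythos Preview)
Your approach is essentially the paper's: test with $\varphi=\tau^2u^{-p}$, split diagonal/off-diagonal, apply \autoref{prop:ab} on the diagonal, control the off-diagonal and cut-off errors via \autoref{lem:cut-off} and \eqref{eq:assum:cutoff}, extract the left-hand side through \autoref{cor:sobolev-local}, and handle $(f,-\varphi)$ by $u^{-p}\leq\epsilon^{-1}u^{1-p}$, H\"older, and the Lyapunov interpolation \eqref{inequal} with $a=\omega\,r^{\alam(q-\beta)/q}$.

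The only divergence is your final absorption step. The paper does \emph{not} run a Giaquinta--Giusti iteration; it simply writes ``choosing $\omega$ small enough proves the assertion.'' You are correct that, with the balls recorded literally (the Lyapunov remainder carrying $\|u^{-1}\|_{L^{(p-1)\beta/(\beta-1)}(M_{\lambda r})}$ while the Sobolev step yields the same norm only over $M_r$), a direct absorption looks blocked. But the cure is much cheaper than an iteration lemma: keep the cut-off weight throughout. Write the H\"older step as $|(f,\tau^2u^{1-p})|\leq\|f\|_{L^q}\,\|\tau^2u^{1-p}\|_{L^{q'}}$ and apply \eqref{inequal} to $\tau^2u^{1-p}$, so the high-norm piece is
\[
\|\tau^2u^{1-p}\|_{L^{\beta/(\beta-1)}(\R^d)}=\|\tau u^{(1-p)/2}\|_{L^{2\beta/(\beta-1)}(\R^d)}^{2}.
\]
Since $\tau u^{(1-p)/2}$ is compactly supported in $M_{\lambda r}$, \autoref{sobolev-whole} (together with \eqref{eq:assum:comparability} and the off-diagonal cut-off bound) gives the same quantity $\|\tau u^{(1-p)/2}\|_{L^{2\beta/(\beta-1)}(\R^d)}^{2}$ as a lower bound for the energy term, not merely its restriction to $M_r$. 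The two quantities now coincide and the $\omega$-term is absorbed directly; afterwards one drops to $M_r$ using $\tau\equiv1$ there. So your plan is sound, but the nested-radii iteration is an unnecessary detour.
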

\begin{proof}
 Let $\tau:\R^d\to\R$ be as in \eqref{def:cutoff_aniso}. We follow the idea 
of the proof of \cite[Lemma 4.6]{DyKa15}.
  
For brevity let $M_r=M_r(x_0)$. Since $\mathcal{E}(u,\varphi)\geq 
(f,\varphi)$ 
for any nonnegative $\varphi\in H_{M_r}(\R^d)$ we get
\[ \mathcal{E}(u,-\tau^{2}u^{-p})\leq (f,-\tau^{2}u^{-p}). \]
Furthermore
\begin{align*}
  \int_{\R^d}\int_{\R^d}& \left(u(y)-u(x)\right)\left(\tau(x)^2u(x)^{-p} - 
\tau(y)^2u(y)^{-p}\right) \mu(x,\d y)\, \d x  \\
& = \int_{M_{\lambda r}}\int_{M_{\lambda r}} 
\left(u(y)-u(x)\right)\left(\tau(x)^2u(x)^{-p} - \tau(y)^2u(y)^{-p}\right) 
\mu(x,\d y)\, \d x \\
& \qquad \quad +   2\int_{M_{\lambda r}}\int_{M_{\lambda r}^c} 
\left(u(y)-u(x)\right)\left(\tau(x)^2u(x)^{-p} - \tau(y)^2u(y)^{-p}\right) 
\mu(x,\d y)\, \d x  \\
& =: J_1 + 2J_2.
\end{align*}

We first study $J_2$. By \autoref{lem:cut-off} and \eqref{eq:assum:cutoff},
\begin{align*}
J_2 & = \int_{M_{\lambda r}}\int_{M_{\lambda r}^c} u(y)\tau(x)^2u(x)^{-p} 
\mu(x,\d y)\, \d x  -  \int_{M_{\lambda r}}\int_{M_{\lambda r}^c} 
\tau(x)^2u(x)^{-p+1} \mu(x,\d y)\, \d x \\
& \geq -\int_{M_{\lambda r}}\int_{M_{\lambda r}^c} \tau(x)^2u(x)^{-p+1} 
\mu(x,\d y)\, \d x\\
& \geq - \|u^{-p+1}\|_{L^1(M_{\lambda r})}\sup_{x\in\R^d} \int_{\R^d} 
(\tau(y)-\tau(x))^2\mu(x,\d y) \\
& \geq - \|u^{-1}\|_{L^{p-1}(M_{\lambda r})}^{p-1} 
c_2r^{-\alam}\left(\sum_{k=1}^d 
\left(\lambda^{\alam/\alpha_k}-1\right)^{-\alpha_k}\right).
 \end{align*}
 \allowdisplaybreaks
Applying \autoref{prop:ab} for $a=u(x), b=u(y), 
\tau_1=\tau(x),\tau_2=\tau(y)$ on $J_1$, 
there exist $c_3,c_4>0$ such that
\begin{align*}\allowdisplaybreaks
J_1& = \int_{M_{\lambda r}}\int_{M_{\lambda r}} 
\left(u(y)-u(x)\right)\left(\tau(x)^2u(x)^{-p} - \tau(y)^2u(y)^{-p}\right) 
\mu(x,\d y)\, \d x \\
& \geq c_3 \int_{M_{\lambda r}}\int_{M_{\lambda r}} 
\left(\tau(x)u(x)^{\frac{-p+1}{2}}-\tau(x)u(x)^{\frac{-p+1}{2}}\right)^2 
\mu(x,\d y)\, \d x \\
& \qquad \quad - c_4\frac{p}{p-1}\int_{M_{\lambda r}}\int_{M_{\lambda r}} 
(\tau(y)-\tau(x))^2(u(y)^{-p+1}+u(x)^{-p+1})\mu(x,\d y)\, \d x. 
 \end{align*}
 Hence 
 \begin{equation}\label{moserprep}\allowdisplaybreaks
\begin{aligned}
 \int_{M_{\lambda r}}&\int_{M_{\lambda r}} 
\left(\tau(x)u(x)^{\frac{-p+1}{2}}-\tau(x)u(x)^{\frac{-p+1}{2}}\right)^2 
\mu(x,\d y)\, \d x \\
  & \leq \frac{1}{c_3}J_1 + c_4\frac{p}{p-1}\int_{M_{\lambda 
r}}\int_{M_{\lambda r}} (\tau(y)-\tau(x))^2(u(y)^{-p+1}+u(x)^{-p+1})\mu(x,\d 
y)\, \d x \\
  & = \frac{1}{c_3}  \int_{\R^d}\int_{\R^d} 
\left(u(y)-u(x)\right)\left(\tau(x)^2u(x)^{-p} - \tau(y)^2u(y)^{-p}\right) 
\mu(x,\d y)\, \d x - \frac{2}{c_3}J_2 \\
  & \qquad + c_4\frac{p}{p-1}\int_{M_{\lambda r}}\int_{M_{\lambda r}} 
(\tau(y)-\tau(x))^2(u(y)^{-p+1}+u(x)^{-p+1})\mu(x,\d y)\, \d x \\
  & \leq \frac{1}{c_3}  \int_{\R^d}\int_{\R^d} 
\left(u(y)-u(x)\right)\left(\tau(x)^2u(x)^{-p} - \tau(y)^2u(y)^{-p}\right) 
\mu(x,\d y)\, \d x \\
  & \qquad + \frac{16}{c_3}\|u^{-p+1}\|_{L^1(M_{\lambda r})} 
r^{-\alam}\left(\sum_{k=1}^d 
\left(\lambda^{\alam/\alpha_k}-1\right)^{-\alpha_k}\right) 
\\
  & \qquad + c_4\frac{p}{p-1}\int_{M_{\lambda r}}\int_{M_{\lambda r}} 
(\tau(y)-\tau(x))^2(u(y)^{-p+1}+u(x)^{-p+1})\mu(x,\d y)\, \d x. 
\end{aligned} 
 \end{equation}
We derive the assertion from \eqref{moserprep}.

The first expression of the right-hand-side of \eqref{moserprep} can be 
estimated 
with the help of \eqref{inequal} as follows:
\begin{align*}
 \int_{\R^d}\int_{\R^d}& \left(u(y)-u(x)\right)\left(\tau(x)^2u(x)^{-p} - 
\tau(y)^2u(y)^{-p}\right) \mu(x,\d y)\, \d x = \mathcal{E}(u,-\tau^2u^{-p}) \\
 & \leq (f,-\tau^2u^{-p}) \leq \epsilon^{-1} |(f,-\tau^2u^{-p+1})| = 
\epsilon^{-1} |(\tau f,\tau u^{-p+1})| \\
 & \leq \epsilon^{-1} \|\tau f\|_{L^q(\R^d)}\|\tau 
u^{-p+1}\|_{L^{\frac{q}{q-1}}(\R^d)} \\
 & \leq \epsilon^{-1} \|\tau f\|_{L^q(\R^d)}\left(\frac{\beta}{q}a\|\tau 
u^{-p+1}\|_{L^{\frac{\beta}{\beta-1}}(\R^d)}+\frac{q-\beta}{q}a^{\frac{-\beta}{
q-\beta}}\|\tau u^{-p+1}\|_{L^1(\R^d)}\right) \\
 & \leq \epsilon^{-1} \|f\|_{L^q(M_{\lambda r})}\left(\frac{\beta}{q}a\|\tau 
u^{-p+1}\|_{L^{\frac{\beta}{\beta-1}}(\R^d)}+\frac{q-\beta}{q}a^{\frac{-\beta}{
q-\beta}}\|\tau u^{-p+1}\|_{L^1(\R^d)}\right) \\
 & \leq r^{\alam(\beta-q)/q}\left(\frac{\beta}{q}a\|\tau 
u^{-p+1}\|_{L^{\frac{\beta}{\beta-1}}(\R^d)}+\frac{q-\beta}{q}a^{\frac{-\beta}{
q-\beta}}\|\tau u^{-p+1}\|_{L^1(\R^d)}\right),
\end{align*}
where $a>0$ can be chosen arbitrarily.
Set 
\[a=r^{\alam(q-\beta)/q}\omega\] 
for some $\omega>0$. Since $1<\lambda\leq \sqrt{2}$,  for 
all $k\in\{1,\dots,d\}$
\[\lambda\leq (2^{1/\alpha_k}+1)^{\alpha_k/2} \iff 
(\lambda^{2/\alpha_k}-1)^{-\alpha_k}\geq \frac12.\]
Using $(\lambda^{\alam/\alpha_k}-1)^{-\alpha_k} \geq 
(\lambda^{2/\alpha_k}-1)^{-\alpha_k}$, leads to
\[ \left(\sum_{k=1}^d (\lambda^{2/\alpha_k}-1)^{-\alpha_k}\right)\geq 1. \]  
Altogether, we obtain
\begin{equation}\label{rhs1}
\begin{aligned}
  \int_{\R^d}\int_{\R^d}& \left(u(y)-u(x)\right)\left(\tau(x)^2u(x)^{-p} - 
\tau(y)^2u(y)^{-p}\right) \mu(x,\d y)\, \d x \\
  & \leq 
\frac{\beta}{q}\omega\|u^{-p+1}\|_{L^{\frac{\beta}{\beta-1}}(M_{\lambda 
r})}+\frac{q-\beta}{\beta}r^{-\alam}\omega^{\frac{-\beta}{q-\beta}}\|\tau 
u^{-p+1}\|_{L^1(M_{\lambda r})} \\
  & \leq 
\frac{\beta}{q}\omega\|u^{-1}\|_{L^{(p-1)\frac{\beta}{\beta-1}}(M_{\lambda 
r})}^{p-1}\\
  & \qquad +\frac{q-\beta}{\beta}r^{-\alam}\left(\sum_{k=1}^d 
(\lambda^{\alam/\alpha_k}-1)^{-\alpha_k}\right)\omega^{\frac{-\beta}{q-\beta}}
\|u^{
-1}\|_{L^{p-1}(M_{\lambda r})}^{p-1}.
\end{aligned}
\end{equation}
The third expression of the right-hand-side of \eqref{moserprep} can be 
estimated as 
follows:
\begin{align*}
  \int_{M_{\lambda r}}\int_{M_{\lambda r}} 
&(\tau(y)-\tau(x))^2(u(y)^{-p+1}+u(x)^{-p+1})\mu(x,\d y)\, \d x\\
 & = 2 \int_{M_{\lambda r}}\int_{M_{\lambda r}} 
(\tau(y)-\tau(x))^2(u(x)^{-p+1})\mu(x,\d y)\, \d x \\
 & \leq 2\|u^{-p+1}\|_{L^1({M_{\lambda r}})}\sup_{x\in\R^d} \int_{\R^d} 
(\tau(y)-\tau(x))^2\mu(x,\d y) \\
 & \leq c_5r^{-\alam}\left(\sum_{k=1}^d 
\left(\lambda^{\alam/\alpha_k}-1\right)^{-\alpha_k}\right)\|u^{-1}\|_{L^{p-1}(M_
{
\lambda r})}^{p-1}.
\end{align*}
By \autoref{cor:sobolev-local}, we can estimate the left-hand-side of 
\eqref{moserprep} from below
\begin{align*}
 \int_{M_{\lambda r}}&\int_{M_{\lambda r}} 
\left(\tau(x)u(x)^{\frac{-p+1}{2}}-\tau(x)u(x)^{\frac{-p+1}{2}}\right)^2 
\mu(x,\d y)\, \d x \\
& \geq c_6 \|\tau u^{\frac{-p+1}{2}}\|_{L^{\frac{2\beta}{\beta-1}}(M_{r})}^2 - 
r^{-\alam}\left(\sum_{k=1}^d 
(\lambda^{\alam/\alpha_k}-1)^{-\alpha_k}\right)\|\tau 
u^{\frac{-p+1}{2}}\|_{L^{2}(M_{\lambda r})}^2 \\
& \geq c_6 \|u^{\frac{-p+1}{2}}\|_{L^{\frac{2\beta}{\beta-1}}(M_{r})}^2 - 
r^{-\alam}\left(\sum_{k=1}^d 
(\lambda^{\alam/\alpha_k}-1)^{-\alpha_k}\right)\|u^{\frac{-p+1}{2}}\|_{L^{2}(M_{
\lambda r})}^2 \\
& = c_6 \|u^{-1}\|_{L^{(p-1)\frac{\beta}{\beta-1}}(M_{r})}^{p-1} - 
r^{-\alam}\left(\sum_{k=1}^d 
(\lambda^{\alam/\alpha_k}-1)^{-\alpha_k}\right)\|u^{-1}\|_{L^{p-1}(M_{\lambda 
r})}^{p-1}.
\end{align*}
Combining these estimates there exists a constant $c_1>0$, independent of $x_0, 
r, \lambda$, $\alpha_1,\dots,\alpha$ and $u$, but depending on $d$ and 
$2\beta/(\beta-1)$, such that
\begin{align*}
\|u^{-1}\|_{L^{(p-1)\frac{\beta}{\beta-1}}(M_{r})}^{p-1} & \leq 
c_1(\omega^{\frac{-\beta}{q-\beta}}+\frac{p}{p-1})\left(\sum_{k=1}^d 
(\lambda^{\alam/\alpha_k}-1)^{-\alpha_k}\right)r^{-\alam}\|u^{-1}\|_{L^{p-1}(M_{
\lambda 
r})}^{p-1} \\
& \qquad \qquad  + \frac{1}{c_2c_4}\omega 
\|u^{-1}\|_{L^{(p-1)\frac{\beta}{\beta-1}}(M_{\lambda r})}.
\end{align*}
Choosing $\omega$ small enough proves the assertion.
\end{proof}
\begin{lemma}\label{iteration}
 Assume $x_0\in M_1$ and $r\in[0,1)$. Let 
$\lambda\in(1,\min\{r^{-1},\sqrt{2}\})$. Assume $f\in L^{q}(M_{\lambda 
r}(x_0))$ for some $q>\max\{2,\beta\}$
 and let $u\in V^{\mu}(M_{\lambda r}(x_0)\big|\R^d)$ satisfy 
  \begin{align*}
  \mathcal{E}(u,\varphi)&\geq (f,\varphi) \quad \text{for any nonnegative } 
\varphi\in 
H^{\mu}_{M_{\lambda r}}(\R^d),\\
  u(x)&\geq \epsilon \qquad \ \text{ for almost all } x\in M_{\lambda r} \text{ 
and some } \epsilon>\|f\|_{L^q(M_{\lambda r}(x_0))}r^{\alam(q-\beta)/q}.
 \end{align*}
 Then for any $p_0>0$, there is a constant $c_1>0$, independent of 
$u,x_0,\lambda, r,\epsilon$ and $\alpha_1,\dots,\alpha_d$, such that
 \begin{equation}\label{inf}
  \inf\limits_{x\in M_{r}(x_0)} u(x)\geq c_1\left(\dashint_{M_{2r}(x_0)} 
u(x)^{-p_0} \, \d x\right)^{-1/p_0}.
 \end{equation}
\end{lemma}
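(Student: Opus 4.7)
The plan is a classical Moser iteration with negative exponents, driven by \autoref{abschfuermoser}.

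Fix $\Theta^{\ast} := \beta/(\beta-1) > 1$, let $\sigma_j := p_0 (\Theta^{\ast})^j$ (so $\sigma_j \nearrow \infty$), and choose radii $r_j := r(1+2^{-j})$ decreasing from $r_0 = 2r$ to $r$, with ratios $\lambda_j := r_j/r_{j+1} \in (1, 4/3) \subset (1,\sqrt{2})$. Applying \autoref{abschfuermoser} on $M_{r_{j+1}}(x_0)$ with $p = \sigma_j + 1$ and $\lambda = \lambda_j$ (so $\lambda_j r_{j+1} = r_j \leq 2r$ is admissible provided $r$ is small enough; otherwise one replaces $r_0 = 2r$ by an admissible $r_0 = \lambda r$), and extracting $\sigma_j$-th roots, I obtain
\begin{equation*}
\|u^{-1}\|_{L^{\sigma_{j+1}}(M_{r_{j+1}}(x_0))} \leq A_j^{1/\sigma_j}\,\|u^{-1}\|_{L^{\sigma_j}(M_{r_j}(x_0))},
\end{equation*}
with $A_j := c_1\,\tfrac{\sigma_j+1}{\sigma_j}\bigl(\sum_{k=1}^d(\lambda_j^{\amax/\alpha_k}-1)^{-\alpha_k}\bigr)\,r_{j+1}^{-\amax}$. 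Telescoping from $j=0$ to $j=N-1$, then using $M_r(x_0)\subset M_{r_N}(x_0)$ together with the elementary convergence $\lim_{p\to\infty}\|g\|_{L^p(M_r(x_0))} = \|g\|_{L^\infty(M_r(x_0))}$, yields
\begin{equation*}
\|u^{-1}\|_{L^{\infty}(M_r(x_0))} \leq \Big(\prod_{j=0}^{\infty} A_j^{1/\sigma_j}\Big)\,\|u^{-1}\|_{L^{p_0}(M_{2r}(x_0))}.
\end{equation*}

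Next I would show that the infinite product converges with an $r$-dependence that matches the volume factor. Since $\lambda_j - 1 = 2^{-j-1}/(1+2^{-j-1})$, a Taylor estimate gives $(\lambda_j^{\amax/\alpha_k}-1)^{-\alpha_k} \leq C\cdot 2^{j\amax}$, so $\log A_j = O(j) + \amax\,|\log r_{j+1}|$; combined with the geometric growth of $\sigma_j$, the series $\sum_{j\geq 0}(\log A_j)/\sigma_j$ converges. The geometric identity
\begin{equation*}
\sum_{j=0}^{\infty}\frac{1}{\sigma_j} = \frac{1}{p_0}\cdot\frac{\Theta^{\ast}}{\Theta^{\ast}-1} = \frac{\beta}{p_0}
\end{equation*}
then implies that the $r^{-\amax}$-factors inside the $A_j$ combine into $r^{-\amax\beta/p_0}$, which exactly cancels the volume factor $|M_{2r}(x_0)|^{1/p_0} \asymp r^{\amax\beta/p_0}$ appearing when the $L^{p_0}$-norm is rewritten as a normalised average. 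Inverting and replacing the norm by the average thus produces the claimed inequality with an $r$-independent constant.

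The main obstacle is precisely this bookkeeping step: one has to verify summability of $(\log A_j)/\sigma_j$ despite the blow-up $(\lambda_j-1)^{-\alpha_k}\to\infty$, and then track the accumulated $r$-powers carefully enough to see that they combine exactly into $r^{-\amax\beta/p_0}$ so that, after dividing by $|M_{2r}(x_0)|^{1/p_0}$, the remaining prefactor is scale-invariant. Admissibility of \autoref{abschfuermoser} at every stage (in particular $\lambda_j < \sqrt{2}$ and $\lambda_j r_{j+1} < 1$) is the only subsidiary check, and is ensured by the choice of $r_j$ above.
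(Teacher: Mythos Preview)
Your proposal is correct and follows essentially the same Moser iteration as the paper: define geometrically growing exponents $p_n=p_0\bigl(\tfrac{\beta}{\beta-1}\bigr)^n$, apply \autoref{abschfuermoser} with $p=p_n+1$ on a shrinking family of rectangles from $M_{2r}$ down to $M_r$, telescope, and check that the infinite product converges with $r$-dependence exactly $r^{-\alam\beta/p_0}$, cancelling $|M_{2r}|^{1/p_0}$. The only difference is the choice of radii---the paper takes $r_n=\tfrac{n+2}{n+1}\,r$ with $\lambda_n=\tfrac{(n+2)^2}{(n+1)(n+3)}$ instead of your dyadic $r_j=r(1+2^{-j})$---but either sequence works and the bookkeeping (including the key identity $\sum_j 1/\sigma_j=\beta/p_0$, which the paper writes as $-\alam/p_n-\alam\beta/p_{n+1}=-\alam\beta/p_n$) is the same.
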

\begin{proof}
We set $M_r=M_r(x_0)$. For $n\in \N_0$ we define the sequences
\[  r_n=\left(\frac{n+2}{n+1}\right)r \quad \text{and} \quad 
p_n=p_0\left(\frac{\beta}{\beta-1}\right)^n. \]
Then $r_0=2r$, $r_k>r_{k+1}$ for all $k\in\N_0$ and $r_n\searrow r$ as 
$n\to\infty$. Note
\[ r_{n} = \frac{(n+2)^2}{(n+1)(n+3)}r_{n+1} =: \lambda_n r_{n+1}. \]
Moreover $p_0=p_0$, $p_k<p_{k+1}$ for all $k\in \N_0$ and $p_n\nearrow +\infty$ 
as $n\to\infty$.\\
Using 
\[\frac{-\alam}{p_n}-\frac{\alam\beta}{p_{n+1}}=\frac{-\alam\beta}{p_n},\] 
we have
\begin{align*}
\frac{r_{n+1}^{-\alam/p_n}}{|M_{r_{n+1}}|^{1/p_{n+1}}} = \frac{2^{d/(\beta 
p_n)}\lambda_n^{\alam\beta/p_n}}{|M_{r_n}|^{1/p_n}}.
\end{align*}
Moreover, by \autoref{abschfuermoser}, we have for $p=p_n+1$
\begin{align*}
 &\|u^{-1}\|_{L^{p_{n+1}}(M_{r_{n+1}})} = 
\|u^{-1}\|_{L^{p_n\frac{\beta}{\beta-1}}(M_{r_{n+1}})} \\
 \qquad & \leq c_2^{1/p_{n}}\left(\frac{p_{n}+1}{p_n}\right)^{1/p_n} 
\left(\sum_{k=1}^d 
(\lambda_n^{\alam/\alpha_k}-1)^{-\alpha_k}\right)^{1/p_n}r_{n+1}^{-\alam/p_n}
\|u^{-1}\|_
{L^{p_n}(M_{r_n})}. 
\end{align*}
This yields
\begin{align*}
\Bigg(\dashint_{M_{r_{n+1}}} & (u^{-1})^{p_{n+1}} \Bigg)^{1/p_{n+1}} \\
& \leq 2^{d/(\beta 
p_n)}\lambda_n^{\alam\beta/p_n}c_2^{1/p_{n}}\left(\frac{p_{n}+1}{p_n}\right)^{
1/p_n}
 \left(\sum_{k=1}^d (\lambda_n^{\alam/\alpha_k}-1)^{-\alpha_k}\right)^{1/p_n} 
\left(\dashint_{M_{r_{n}}} (u^{-1})^{p_{n}} \right)^{1/p_{n}}.
\end{align*}
which is equivalent to
\begin{equation}\label{iteration1}
\begin{aligned}
 \left(\dashint_{M_{r_{n}}} u^{-p_{n}} \right)^{-1/p_{n}} \leq 2^{d/(\beta 
p_n)}&\lambda_n^{\alam\beta/p_n}c_2^{1/p_{n}}\left(\frac{p_{n}+1}{p_n}\right)^{
1/p_n
} \\
& \times \left(\sum_{k=1}^d 
(\lambda_n^{\alam/\alpha_k}-1)^{-\alpha_k}\right)^{1/p_n} 
\left(\dashint_{M_{r_{n+1}}} u^{-p_{n+1}} \right)^{-1/p_{n+1}}.
\end{aligned}
 \end{equation}
Iterating \eqref{iteration1} leads to
\begin{equation}\label{iteration2}
 \begin{aligned}
 \left(\dashint_{M_{r_{0}}} u^{-p_{0}} \right)^{-1/p_{0}} \leq& 
\left(\prod_{j=0}^n 2^{d/(\beta p_j)}\right)\left(\prod_{j=0}^n 
\lambda_j^{\alam\beta/p_j}\right)\left(\prod_{j=0}^n c_2^{1/p_{j}}\right)
 \left(\prod_{j=0}^{n}\left(\frac{p_{j}+1}{p_j}\right)^{1/p_j}\right) \\
&\quad  \times \left(\prod_{j=0}^n \left(\sum_{k=1}^d 
(\lambda_j^{\alam/\alpha_k}-1)^{-\alpha_k}\right)^{1/p_j}\right) 
\left(\dashint_{M_{r_{n+1}}} u^{-p_{n+1}} \right)^{-1/p_{n+1}}.
\end{aligned}
\end{equation}
One can easily show that the expressions on the right-hand-side of 
\eqref{iteration2} are bounded for $n\to\infty$. 
Since
\[ \lim\limits_{n\to\infty} \left(\dashint_{M_{r_n}} u^{-p_n}\right)^{-1/p_n} = 
\inf_{x\in M_r} u(x), \]
taking the limit $n\to\infty$ in \eqref{iteration2}, proves the assertion.
\end{proof}
From \autoref{iteration} and \autoref{flip-assert} we immediately conclude the 
following result. 
\begin{corollary}\label{weak-harnack-prepare}
 Let $f\in L^q(M_1)$ for some $q>\max\{2,\beta\}$. There are $p_0, c_1>0$  
 such that for every $u\in V^{\mu}(M_1\big|\R^d)$ with $u\geq 0$ in $\R^d$ and
 \[ \mathcal{E}(u,\varphi)\geq (f,\varphi) \quad \text{ for every nonnegative } 
\varphi\in H^{\mu}_{M_1}(\R^d), \]
 the following holds
 \[ \inf\limits_{M_{\frac14}}u \geq c_1\left(\dashint_{M_{\frac12}} 
u(x)^{p_0}\, \d x\right)^{1/p_0} - \|f\|_{L^{q}(M_{\frac{15}{16}})}. \]
\end{corollary}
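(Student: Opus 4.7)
The plan is to reduce to the strictly-positive case by adding a constant and then chain together Lemma \ref{iteration} and Theorem \ref{flip-assert}. Fix $\eta > 0$ and set
\[
v = u + \|f\|_{L^{q}(M_{15/16})} + \eta.
\]
Since $\mathcal{E}(v,\varphi) = \mathcal{E}(u,\varphi)$ for every $\varphi \in H^{\mu}_{M_1}(\R^d)$ (constants are annihilated by the bilinear form), $v$ inherits the supersolution property. Moreover $v$ is bounded below by $\epsilon := \|f\|_{L^{q}(M_{15/16})} + \eta$ on all of $\R^d$, and for any $r \leq 1$ one has $r^{\alam(q-\beta)/q} \leq 1$, so $\epsilon$ strictly exceeds the thresholds required in both Lemma \ref{iteration} and Theorem \ref{flip-assert}.

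Next, I would apply Theorem \ref{flip-assert} to $v$ with $x_0 = 0$ and $r = 1/2$. The relevant domain $M_{5/8}(0)$ is contained in $M_{15/16}$, so all hypotheses are met; this produces some $\overline{p} \in (0,1)$ and $c_2 > 0$, independent of $u$, such that
\[
\Bigl(\dashint_{M_{1/2}} v(x)^{\overline{p}}\, \d x\Bigr)^{1/\overline{p}} \leq c_2 \Bigl(\dashint_{M_{1/2}} v(x)^{-\overline{p}}\, \d x\Bigr)^{-1/\overline{p}}.
\]
Then I would apply Lemma \ref{iteration} to $v$ with $x_0 = 0$, $r = 1/4$ and with the exponent $p_0 := \overline{p}$; since $M_{2r} = M_{1/2} \subset M_{15/16}$, the hypotheses again hold, yielding a constant $c_3 > 0$ such that
\[
\inf_{M_{1/4}} v \geq c_3 \Bigl(\dashint_{M_{1/2}} v(x)^{-\overline{p}}\, \d x\Bigr)^{-1/\overline{p}}.
\]

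Combining the two estimates gives $\inf_{M_{1/4}} v \geq c_3 c_2^{-1}\bigl(\dashint_{M_{1/2}} v^{\overline{p}}\bigr)^{1/\overline{p}}$. Now I would undo the shift: $\inf_{M_{1/4}} v = \inf_{M_{1/4}} u + \|f\|_{L^q(M_{15/16})} + \eta$, and since $v \geq u \geq 0$ pointwise we have $\bigl(\dashint_{M_{1/2}} v^{\overline{p}}\bigr)^{1/\overline{p}} \geq \bigl(\dashint_{M_{1/2}} u^{\overline{p}}\bigr)^{1/\overline{p}}$. Rearranging and letting $\eta \downarrow 0$ delivers the claim with $c_1 := c_3/c_2$.

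The routine bookkeeping is essentially straightforward because the shift by a constant is invisible to $\mathcal{E}$, and the main potential obstacle is purely administrative: making sure the radii in the application of Lemma \ref{iteration} and Theorem \ref{flip-assert} nest correctly inside $M_{15/16}$, and that the universal bound $\|f\|_{L^q(M_{15/16})}$ dominates the (scale-dependent) positivity thresholds required in both auxiliary results. Once one checks $(1/4)^{\alam(q-\beta)/q} \leq 1$ and $(1/2)^{\alam} \leq 1$, these thresholds are satisfied uniformly, and the proof goes through cleanly.
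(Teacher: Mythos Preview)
Your proof is correct and follows essentially the same approach as the paper: shift $u$ by $\|f\|_{L^{q}(M_{15/16})}$, then chain \autoref{flip-assert} with \autoref{iteration}. Your additional $\eta$-regularization is a minor refinement that cleanly handles the edge case $\|f\|_{L^{q}(M_{15/16})}=0$, which the paper's proof glosses over.
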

\begin{proof} This proof follows the lines of \cite[Theorem 4.1]{DyKa15}.
 Define $v=u+\|f\|_{L^{q}(M_{\frac{15}{16}})}.$ Then for any nonnegative 
$\varphi\in H^{\mu}_{M_1}(\R^d)$, one obviously has 
 \[\mathcal{E}(u,\varphi)=\mathcal{E}(v,\varphi).\]
By \autoref{flip-assert} there are a $c_2>0$, $p_0\in(0,1)$ such that
\begin{equation}\label{flip-in-proof}
 \left(\dashint_{M_{\frac12}} v(x)^{p_0} \; \, \d x \right)^{1/p_0} \; \d x 
\leq c_2 \left(\dashint_{M_{\frac12}} v(x)^{-p_0} \; \d x \right)^{-1/p_0}.
\end{equation}
Moreover, by \autoref{iteration} there is a $c_3>0$ such that for $r=\frac12$ 
and $p_0$ as in \eqref{flip-in-proof}
\begin{align*}
 \inf\limits_{x\in M_{\frac14}} v(x) \geq c_3\left(\dashint_{M_{\frac12}} 
v(x)^{-p_0} \, \d x\right)^{-1/p_0} \geq 
\frac{c_3}{c_2}\left(\dashint_{M_{\frac12}} u(x)^{p_0} \, \d x\right)^{1/p_0}.
\end{align*}
which is equivalent to
 \[ \inf\limits_{M_{\frac14}}u \geq c_1\left(\dashint_{M_{\frac12}} 
u(x)^{p_0}\, \d x\right)^{1/p_0} - \|f\|_{L^{q}(M_{\frac{15}{16}})}. \]
\end{proof}
Given $g:\R^d\to\R$, let $g^{+}(x):= \max\{g(x),0\}, g^{-}(x):= 
-\min\{g(x),0\}$.

We have all ingredients in order to prove \autoref{theo:weakharnack}.
\begin{proof}[Proof of \autoref{theo:weakharnack}]
For any nonnegative $\varphi\in H^{\mu}_{M_1}(\R^d)$
 \begin{equation}\label{uplus}
   \mathcal{E}(u^{+},\varphi)=\mathcal{E}(u,\varphi) + 
\mathcal{E}(u^{-},\varphi) \geq 
(f,\varphi) + \mathcal{E}(u^{-},\varphi) .   
 \end{equation}
 Since $\varphi\in H^{\mu}_{M_1}(\R^d)$ and $u^{-}\equiv 0$ on $M_1$ , we have
 \[ (f,\varphi) = \int_{\R^d}f(x)\varphi(x)\, \d x = \int_{M_1}f(x)\varphi(x)\, 
\d x \]
 and 
 \begin{align*}
  \mathcal{E}(u^{-},\varphi) & = \int_{\R^d}\int_{\R^d} 
(u^{-}(y)-u^{-}(x))(\varphi(y)-\varphi(x))\, \mu(x,\d y)\, \d x \\
  & = -2\int_{M_1}\int_{(M_1)^c} u^{-}(y)\varphi(x)\, \mu(x,\d y)\, \d x.
 \end{align*}
 Hence, we get from \eqref{uplus}
  \[\mathcal{E}(u^{+},\varphi)\geq \int_{M_1}\varphi(x) 
\left(f(x)-2\int_{(M_1)^c} 
u^{-}(y)\, \mu(x,\d y) \right)\, \d x.\]
  Therefore, $u^{+}$ satisfies all assumptions of 
\autoref{weak-harnack-prepare} with $q=+\infty$ and $\widetilde{f}:M_1\to\R$, 
defined by 
  \[\widetilde{f}(x)=f(x)-2\int_{\R^d\setminus M_1} u^{-}(y)\, \mu(x,\d y). \]
  If $\sup\limits_{x\in M_{\frac{15}{16}}} \int_{\R^d\setminus M_1} 
u^{-}(z)\mu(x,\d z)=\infty$, then the assertion of the theorem is obviously 
true.
  Thus we can assume this quantity to be finite. Applying 
\autoref{weak-harnack-prepare} and Hölder's inequality
   \begin{align*}
     \inf\limits_{M_{\frac14}}u &\geq c_1\left(\dashint_{M_{\frac12}} 
u(x)^{p_0}\, \d x\right)^{1/p_0} - \|\widetilde{f}\|_{L^{q}(M_{\frac{15}{16}})} 
\\
     & = c_1\left(\dashint_{M_{\frac12}} u(x)^{p_0}\, \d x\right)^{1/p_0} - 
\|f\|_{L^{q}(M_{\frac{15}{16}})} - 2\left\|\int_{\R^d\setminus M_1} u^{-}(y)\, 
\mu(x,\d y)\right\|_{L^{q}(M_{\frac{15}{16}})} \\
     & \geq c_1\left(\dashint_{M_{\frac12}} u(x)^{p_0}\, \d x\right)^{1/p_0} - 
\|f\|_{L^{q}(M_{\frac{15}{16}})} - \sup\limits_{x\in 
M_{\frac{15}{16}}}2\int_{\R^d\setminus M_1} u^{-}(y)\, \mu(x,\d y).
   \end{align*}
\end{proof}
An immediate consequence of \autoref{theo:weakharnack} is the following 
result, which follows via scaling and translation.

\begin{corollary}\label{cor:weak_harnack-scaled}
Let $x_0\in M_1$, $r\in(0,1]$. Let $f\in L^q(M_1(x_0))$ for some 
$q>\max\{2,\beta\}$.  Assume $u\in V^{\mu}(M_r(x_0)\big|\R^d)$ 
satisfies $u\geq 0$ in $M_r(x_0)$ and $\mathcal{E}(u,\varphi)\geq(f,\varphi)$
 for every $\varphi\in H^{\mu}_{M_r(x_0)}(\R^d)$. 
  Then there exists $p_0\in(0,1)$, $c_1>0$, independent of $u,x_0$ and $r$, 
such that
 \begin{align*}
  \inf\limits_{M_{\frac14 r}(x_0)}u \geq c_1\left(\dashint_{M_{\frac12 
r}(x_0)} u(x)^{p_0}\, \d x\right)^{1/p_0} & - r^{\alam}\sup\limits_{x\in 
M_{\frac{15}{16}r}(x_0)} 2\int_{\R^d\setminus M_r(x_0)} u^{-}(z)\mu(x,\d z) \\
& - r^{\alam(1-\frac{\beta}{q})}\|f\|_{L^q}(M_{\frac{15}{16}r}).
\end{align*}
\end{corollary}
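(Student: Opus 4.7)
The plan is to reduce to \autoref{theo:weakharnack} via the anisotropic dilation $\Psi\colon\R^d\to\R^d$ with $\Psi(y)_k=r^{\amax/\alpha_k}y_k$, which satisfies $x_0+\Psi(M_\rho)=M_{\rho r}(x_0)$ for every $\rho>0$ and has Jacobian $|\det\Psi|=r^{\amax\beta}$. Set
\[
v(y):=u(x_0+\Psi(y)),\qquad F(y):=r^{\amax}f(x_0+\Psi(y)),
\]
and define $\bar\mu(y,\d z):=r^{\amax}\tilde\mu(y,\d z)$, where $\tilde\mu(y,\cdot)$ is the pushforward of $\mu(x_0+\Psi(y),\cdot)$ under the map $w\mapsto\Psi^{-1}(w-x_0)$. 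A direct computation starting from \eqref{def:mu_axes} shows that the analogous transformation of $\ma$, rescaled by $r^{\amax}$, returns $\ma$ itself; hence $\bar\mu$ satisfies \autoref{assum:levy-symmetry}, \autoref{assum:comparability} and \autoref{assum:cutoff} on $M_1$ with the same constants as $\mu$ satisfies them on $M_r(x_0)$.

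Change of variables $x=x_0+\Psi(s)$, $y=x_0+\Psi(t)$ applied to both sides of the supersolution inequality yields
\[
|\det\Psi|\,r^{-\amax}\mathcal{E}^{\bar\mu}(v,\psi)=\mathcal{E}^{\mu}(u,\varphi)\geq(f,\varphi)=|\det\Psi|\,r^{-\amax}(F,\psi),
\]
where $\varphi(x)=\psi(\Psi^{-1}(x-x_0))$ ranges over all nonnegative elements of $H^{\mu}_{M_r(x_0)}(\R^d)$ as $\psi$ ranges over the nonnegative elements of $H^{\bar\mu}_{M_1}(\R^d)$. Since $v\geq0$ on $M_1$, \autoref{theo:weakharnack} applies to $v$, $\bar\mu$ and $F$.

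It then remains to transfer each term of the resulting weak Harnack inequality back to the original variables: the infimum over $M_{1/4}$ becomes the infimum over $M_{r/4}(x_0)$; the averaged $p_0$-integral is preserved because $|M_{\rho r}(x_0)|=|\det\Psi|\,|M_\rho|$; the nonlocal tail integrated against $\bar\mu$ produces the prescribed prefactor $r^{\amax}$; and $\|F\|_{L^q(M_{15/16})}=r^{\amax}|\det\Psi|^{-1/q}\|f\|_{L^q(M_{15r/16}(x_0))}=r^{\amax(1-\beta/q)}\|f\|_{L^q(M_{15r/16}(x_0))}$. The only nontrivial point in the argument is the scale invariance of the structural assumptions under the anisotropic dilation $\Psi$, which is precisely what forces the factor $r^{\amax}$ to be absorbed into the definition of $\bar\mu$; once this is observed, all remaining steps are bookkeeping of scale factors.
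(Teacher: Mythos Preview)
Your argument is correct and is exactly the ``scaling and translation'' that the paper invokes without detail: the anisotropic dilation $\Psi$ is precisely the map introduced at the beginning of \autoref{sec:prop_weak_sol}, and your computation of how the energy, the tail, the averaged $L^{p_0}$ norm, and the $L^q$ norm of $f$ transform under $\Psi$ is accurate. The only step the paper makes explicit is the scaling identity $\mathcal{E}_{\Omega}^{\ma}(u\circ\Psi,u\circ\Psi)=\lambda^{\alam-\alam\beta}\mathcal{E}^{\ma}_{\Psi(\Omega)}(u,u)$, which underlies your observation that the rescaled reference measure coincides with $\ma$ and hence that \eqref{eq:assum:comparability} and \eqref{eq:assum:cutoff} transfer to $\bar\mu$ with unchanged constants; everything else in your write-up is the bookkeeping the paper leaves to the reader.
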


\section{H\"older regularity estimates for weak solutions}\label{sec:hoelder}
\allowdisplaybreaks

In this section we prove the main result of this article, i.e. an a 
priori H\"older estimates for weak solutions to $\mathcal{L}u=f$ in $M_1$.  
For this purpose, we first prove a decay of oscillation result. 
We modify the general scheme for the derivation of a 
priori  H\"older estimates developed in \cite{DyKa15}.

\begin{theorem}\label{theo:hoelderprep}
Let $x_0\in\R^d$, $r_0\in(0,1]$. Let $c_a \geq 
1$, $p > 0$ and $\Theta>\lambda>\sigma>1$. Let $f\in L^q(M_1(x_0))$ for some 
$q>\max\{2,\beta\}$.
We assume 
that the weak Harnack inequality holds true in $M_r(x_0)$, i.e. 

\begin{minipage}{0.9\textwidth}
For every $0<r\leq r_0$ and $u\in V^{\mu}(M_r(x_0)\big|\R^d)$ 
satisfying $u\geq 0$ in $M_r(x_0)$ and $\mathcal{E}(u,\varphi)=(f,\varphi)$
 for every $\varphi\in H^{\mu}_{M_r(x_0)}(\R^d)$,
 \begin{align}\label{theo:hoelderprep_assumption}
\begin{split}
  \left(\dashint_{M_\frac{r}{\lambda}(x_0)}u(x)^p \, \d x \right)^{1/p} \leq 
c_a \Bigg( \inf\limits_{M_{\frac{r}{\Theta}}(x_0)} u + 
 r^{\alam}\sup\limits_{x\in M_{\frac{r}{\sigma}}(x_0)} \int_{\R^d} 
u^{-}(z)\mu(x,\d z)  \\
\qquad + r^{\alam(1-\frac{\beta}{q})} 
\|f\|_{L^q(M_{\frac{r}{\sigma}})} \Bigg)\,.
\end{split}
 \end{align}
\end{minipage}

Then there exists $\delta\in(0,1)$, $c > 0$ such that for $r\in(0,r_0]$, 
 $u\in V^{\mu}(M_r(x_0)\big|\R^d)$ 
satisfying $u\geq 0$ in $M_r(x_0)$ and $\mathcal{E}(u,\varphi)=(f,\varphi)$
 for every $\varphi\in H^{\mu}_{M_r(x_0)}(\R^d)$,
\begin{equation}\label{decayofosc}
 \begin{split} 
 \osc\limits_{M_{\rho}(x_0)} u \leq 2\Theta^{\delta} 
\|u\|_{\infty} \left(\frac{\rho}{r}\right)^{\delta} + c \Theta^{\delta} 
\left(\frac{\rho}{r}\right)^{\delta} r^{\alam(1-\frac{\beta}{q})} 
\|f\|_{L^q(M_{\frac{r}{\sigma}})}, \qquad (0<\rho\leq r)\,.
\end{split}
\end{equation}
\end{theorem}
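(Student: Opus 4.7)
The strategy is to iterate the assumed weak Harnack inequality on a geometric sequence of rectangles centered at $x_0$, produce a geometric decay of oscillation, and then read off \eqref{decayofosc} by dyadic scale selection. Set $r_n := r\Theta^{-n}$, $M_n := M_{r_n}(x_0)$, $a_n := \essinf_{M_n} u$, $b_n := \esssup_{M_n} u$, and $\omega_n := b_n-a_n$; note that $a_n$ is nondecreasing, $b_n$ nonincreasing, and $\omega_0 \leq 2\|u\|_\infty$. I will show by induction on $n$ that for a sufficiently small $\delta\in(0,1)$ (depending only on the constants appearing in \eqref{theo:hoelderprep_assumption}, on $\alam$, $\beta$, $q$, and on $\Theta$),
\[
\omega_n \leq K\,\Theta^{-n\delta}, \qquad K := 2\|u\|_\infty + c\, r^{\alam(1-\beta/q)}\|f\|_{L^q(M_{r/\sigma}(x_0))}.
\]
Given any $0<\rho\leq r$, choosing $n\in\N_0$ with $r_{n+1}<\rho\leq r_n$ gives $\osc_{M_\rho(x_0)} u \leq \omega_n \leq K\,\Theta^{\delta}(\rho/r)^{\delta}$, which is precisely \eqref{decayofosc}.

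For the inductive step the key observation is that both $v_n := u-a_n$ and $w_n := b_n-u$ are nonnegative on $M_n$ and satisfy $\mathcal{E}(v_n,\varphi)=(f,\varphi)$, $\mathcal{E}(w_n,\varphi)=(-f,\varphi)$, while $\|f\|_{L^q}=\|-f\|_{L^q}$. A measure dichotomy on $M_{r_n/\lambda}(x_0)$ asserts that at least one of the sets $\{v_n\geq\omega_n/2\}$, $\{w_n\geq\omega_n/2\}$ has Lebesgue measure $\geq \tfrac12|M_{r_n/\lambda}(x_0)|$; treating the former case (the latter is symmetric),
\[
\left(\dashint_{M_{r_n/\lambda}(x_0)} v_n^p\,\d x\right)^{1/p} \geq 2^{-(1+1/p)}\,\omega_n.
\]
Applying hypothesis \eqref{theo:hoelderprep_assumption} to $v_n$ on $M_{r_n}(x_0)$, using $\inf_{M_{r_{n+1}}(x_0)} v_n = a_{n+1}-a_n$ and $\omega_{n+1}\leq \omega_n-(a_{n+1}-a_n)$, I obtain the recursion
\[
\omega_{n+1} \leq \eta\,\omega_n + c\,\bigl(T_n + r_n^{\alam(1-\beta/q)}\|f\|_{L^q(M_{r_n/\sigma}(x_0))}\bigr),
\]
with $\eta := 1 - (2^{1+1/p}c_a)^{-1}\in(0,1)$ and $T_n := r_n^{\alam}\sup_{x\in M_{r_n/\sigma}(x_0)}\int_{\R^d} v_n^-(z)\,\mu(x,\d z)$.

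The decisive step, and the main obstacle, is controlling the nonlocal tail $T_n$ by the already-established oscillations on coarser scales. On each annulus $M_k\setminus M_{k+1}$ with $k<n$ one has $u\geq a_k$, hence $v_n^-\leq a_n-a_k\leq \omega_k$, and $v_n^-\leq 2\|u\|_\infty$ on $\R^d\setminus M_0$. A cut-off computation mirroring \autoref{lem:cut-off} combined with \eqref{eq:assum:cutoff} yields $\sup_{x\in M_{r_n/\sigma}(x_0)}\int_{\R^d\setminus M_{r_{k+1}}(x_0)}\mu(x,\d z)\leq c\,r_{k+1}^{-\alam}$ uniformly in $n$ and $k<n$, so that, using $r_n^{\alam}\,r_{k+1}^{-\alam}=\Theta^{(k+1-n)\alam}$,
\[
T_n \leq c\Bigl(\sum_{k=0}^{n-1}\omega_k\,\Theta^{(k+1-n)\alam} + 2\|u\|_\infty\,\Theta^{-n\alam}\Bigr).
\]
Inserting the inductive bound $\omega_k \leq K\Theta^{-k\delta}$ and summing the resulting geometric series, the delicate task is to pick $\delta>0$ small (with $\delta\leq\alam(1-\beta/q)$ so the $f$-term fits the same rate) so that $\eta + c'\,\Theta^{\delta}/(1-\Theta^{-(\alam-\delta)})<\Theta^{-\delta}$. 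As $\delta\to 0^+$ the left-hand side tends to $\eta+c'/(1-\Theta^{-\alam})$; if this limit is $\geq 1$ one first sharpens the dichotomy by recentering $v_n$ at $(a_n+b_n)/2$, which replaces $\omega_k$ by $\omega_k/2$ in the tail bound and multiplies $c'$ by a factor $<1$, and/or enlarges $\Theta$, after which the balance inequality is satisfied. With such a choice the induction closes, $\omega_n\leq K\Theta^{-n\delta}$, and the statement \eqref{decayofosc} follows from the dyadic-scale argument of the first paragraph; this adapts the scheme of \cite[Theorem~5.1]{DyKa15} to the anisotropic rectangles $M_r(x_0)$.
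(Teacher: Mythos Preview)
Your overall strategy---iterate the weak Harnack inequality along $r_n=r\Theta^{-n}$, use a measure dichotomy, and control the nonlocal tail by the annulus decomposition---is precisely the route the paper takes. The genuine gap is in closing the induction. You correctly flag that the balance inequality
\[
\eta + \frac{c'\,\Theta^{\delta}}{1-\Theta^{-(\alam-\delta)}} < \Theta^{-\delta}
\]
may fail as $\delta\to 0^+$, since the right-hand side tends to $\eta + c'/(1-\Theta^{-\alam})$ with $c'>0$ determined by $c_a,d,\sigma,\Theta,\alam$ and in general large compared with $1-\eta=(2^{1+1/p}c_a)^{-1}$. But neither of your proposed fixes works. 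You cannot ``enlarge $\Theta$'': it is prescribed in the hypothesis. And recentering at $(a_n+b_n)/2$ does \emph{not} replace $\omega_k$ by $\omega_k/2$ in the tail bound: if after recentering you apply the Harnack inequality to $g_n=b_n-u$, then on $M_k$ one has $g_n^-\leq b_k-b_n\leq b_k-a_k=\omega_k$, so the constant $c'$ is unchanged. Your bound $v_n^-\leq\omega_k\leq K\Theta^{-k\delta}$ on the $j$-th outward annulus (with $j=n-k$) gives a contribution of order $\Theta^{j\delta}$ after dividing by $K\Theta^{-n\delta}$; this does not become small on the near annuli when $\delta\to 0$, and that is why shrinking $\delta$ alone cannot close the loop.

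The paper avoids this obstruction by \emph{renormalizing at each step}. It first replaces $u$ by $\wtu=u/\bigl(\|u\|_\infty+\tfrac{2}{\kappa}r^{\alam(1-\beta/q)}\|f\|_{L^q}\bigr)$, and at step $k$ works with $v=(\wtu-\tfrac{a_{k-1}+b_{k-1}}{2})\Theta^{(k-1)\delta}$, where the sequences $(a_n),(b_n)$ are \emph{constructed} so that $b_n-a_n=2\Theta^{-n\delta}$ (not merely bounded by it). This exact equality, together with the monotonicity $b_{k-j-1}\geq b_{k-1}$, yields on the $j$-th outward annulus the bound $(1-v)^-\leq 2(\Theta^{j\delta}-1)$, which \emph{vanishes as $\delta\to 0$ for each fixed $j$}. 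One then splits the tail sum $\sum_{j\geq 1}(\Theta^{j\delta}-1)\Theta^{(1-j)\alam}$ at a level $l$: first choose $l$ large so that $\sum_{j>l}\Theta^{j\delta}\Theta^{(1-j)\alam}$ is below $\kappa/4$ (using $\delta\leq\alam/2$), then choose $\delta$ small so that the finite sum $\sum_{j\leq l}(\Theta^{j\delta}-1)\Theta^{(1-j)\alam}$ is below $\kappa/4$. This makes the total tail at most $\kappa/2$ \emph{regardless} of the size of the structural constants, and the induction closes with $\kappa=(2^{1+1/p}c_a)^{-1}$. The missing ingredient in your argument is exactly this step-by-step renormalization producing the factor $(\Theta^{j\delta}-1)$ in place of $\Theta^{j\delta}$ on the near annuli.
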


\begin{proof}
The strategy of the proof is well-known and can be traced back to G.~A. 
Harnack himself. We adapt the proof of \cite[Theorem 1.4]{DyKa15} to the 
anisotropic setting. We also include a right-hand side function $f$. In the 
following, we write $M_r$ instead of $M_r(x_0)$ for $r>0$. 

Let $c_a$ and $p$ be the constants from \eqref{theo:hoelderprep_assumption}. 
Set $\kappa=(2c_a2^{1/p})^{-1}$
 and
 \begin{equation}\label{kappa}
 \delta=\frac{\log\left(\frac{2}{2-\kappa}\right)}{\log(\Theta)} \quad 
\Longrightarrow 1-\frac{\kappa}{2}=\Theta^{-\delta}.   
 \end{equation}

Assume $0<r\leq r_0$ and $u\in V^{\mu}(M_r(x_0)\big|\R^d)$ 
satisfies $u\geq 0$ in $M_r(x_0)$ and $\mathcal{E}(u,\varphi)=(f,\varphi)$
 for every $\varphi\in H^{\mu}_{M_r(x_0)}(\R^d)$. Set 
\begin{align*}
\wtu(x) = u(x) \Big[ \|u\|_{\infty} + \tfrac{2}{\kappa}
r^{\alam(1-\frac{\beta}{q})} \|f\|_{L^q(M_{\frac{r}{\sigma}})} 
 \Big]^{-1}
\end{align*}

 Set $b_0=\|\wtu\|_{\infty}$, $a_0=\inf\{\wtu (x)\colon x\in\R^d\}$ and 
$b_{-n}=b_0, a_{-n}=a_0$ for $n\in\N$. Our aim is to construct an increasing
 sequence $(a_n)_{n\in\Z}$ and a decreasing sequence $(b_n)_{n\in\Z}$ such that 
for all $n\in\Z$
 \begin{equation}\label{Mm}
   \begin{cases}
  a_n\leq \wtu(z) \leq b_n \\
  b_n-a_n\leq 2\Theta^{-n\delta}
 \end{cases}
 \end{equation}
for almost all $z\in M_{r\Theta^{-n}}$. Before we prove \eqref{Mm}, we show that
\eqref{Mm} implies the assertion. Let $\rho\in(0,r]$. There is $j\in \N_0$ such 
that $r\Theta^{-j-1} \leq \rho \leq r\Theta^{-j}.$
Note, that this implies in particular $\Theta^{-j} \leq \rho\Theta/r.$ 
From \eqref{Mm}, we deduce
\begin{align*}
 \osc\limits_{M_{\rho}} \wtu &\leq \osc\limits_{M_{r\Theta^{-j}}} \wtu \leq 
b_j-a_j \leq 2\Theta^{-\delta j} \leq 
2\Theta^{\delta}\left(\frac{\rho}{r}\right)^{\delta} \,,
\end{align*}
where from the assertion follows. It remains to show \eqref{Mm}. 

Assume 
there is $k\in\N$ and there are $b_n,a_n$,
such that \eqref{Mm} holds true for $n\leq k-1$. We need to choose $b_k,a_k$ 
such that \eqref{Mm} still holds for $n=k$.
For $z\in\R^d$ set 
\[ 
v(z)=\left(\wtu(z)-\frac{b_{k-1}+a_{k-1}}{2}\right) \Theta^{(k-1)\delta} \,. 
\]
Then $|v(z)|\leq 1$ for almost every $z\in 
M_{r\Theta^{-(k-1)}}$ and $\mathcal{E}(v,\varphi)=(\widetilde{f},\varphi)$ for 
every $\varphi\in H_{M_{r\Theta^{-(k-1)}}}^{\mu}(\R^d)$, where
\begin{equation}\label{widetildef}
\widetilde{f}(x) = \frac{\Theta^{(k-1)\delta}}{\|u\|_{\infty} + 
\tfrac{2}{\kappa}
r^{\alam(1-\frac{\beta}{q})} \|f\|_{L^q(M_{\frac{r}{\sigma}})}} f(x).
\end{equation}  
Let $z\in\R^d$ be such that $z\notin M_{r\Theta^{-k+1}}$.
Choose $j\in\N$ such that $z\in M_{r\Theta^{-k+j+1}}\setminus 
M_{r\Theta^{-k+j}}$.
 For such $z$ and $j$, we conclude
 \begin{align*}
  \frac{v(z)}{\Theta^{(k-1)\delta}}  & \geq a_{k-j-1} - 
\frac{b_{k-1}+a_{k-1}}{2} \\
  & \geq -(b_{k-j-1}-a_{k-j-1}) + \frac{b_{k-1}-a_{k-1}}{2} \\
  & \geq -2\Theta^{-(k-j-1)\delta} + \frac{b_{k-1}-a_{k-1}}{2}.
 \end{align*}
 Thus 
\begin{equation} \label{v1} 
v(z) \geq 1-2\Theta^{j\delta} 
\end{equation}
and similarly
\begin{equation} \label{v2} 
v(z) \leq 2\Theta^{j\delta}-1 
\end{equation}
for $z\in M_{r\Theta^{-k+j+1}}\setminus M_{r\Theta^{-k+j}}$.
We will distinguish two cases.
\begin{enumerate}
 \item First assume 
 \begin{equation}\label{case1}
  |\{x\in M_{\frac{r\Theta^{-k+1}}{\lambda}} \colon v(x) \leq 0\}|\geq 
\frac{1}{2}|M_{\frac{r\Theta^{-k+1}}{\lambda}}|.
 \end{equation}
 Our aim is to show that in this case 
 \begin{equation}\label{vkleinerkappa}
v(z)\leq 1-\kappa \quad \text{ for almost every } z\in M_{r\Theta^{-k}}.  
 \end{equation} 
 We will first show that this implies \eqref{Mm}. 
 Recall, that \eqref{Mm} holds true for $n\leq k-1$. Hence we need to find 
$a_k, b_k$ satisfying \eqref{Mm}.
 Assume \eqref{vkleinerkappa} holds.
 
Then for almost any $z\in M_{r\Theta^{-k}}$
 \begin{align*}
  \wtu (z) &= \frac{1}{\Theta^{(k-1)\delta}}v(z) + \frac{b_{k-1}+a_{k-1}}{2}\\
  & \leq \frac{1}{\Theta^{(k-1)\delta}}(1-\kappa) + \frac{b_{k-1}+a_{k-1}}{2} 
\\
  & \leq a_{k-1} + \left(1-\frac{\kappa}{2}\right) 2\Theta^{-(k-1)\delta} \\
  & \leq a_{k-1} + 2\Theta^{-k\delta}.
 \end{align*}
If we now set $a_k=a_{k-1}$ and $b_k=b_k+2\Theta^{-k\delta}$, then by the 
induction hypothesis $u(z)\geq a_{k-1}=a_k$ and by the
previous calculation $u(z)\leq b_k$. Hence \eqref{Mm} follows. 

It remains to prove $v(z)\leq 1-\kappa$ for almost every
$z\in M_{r\Theta^{-k}}$. Consider $w=1-v$ and note $w\geq 0$ in 
$M_{r\Theta^{-(k-1)}}$ and $\mathcal{E}(w,\varphi)=(\widetilde{f},\varphi)$ 
for every $\varphi\in H_{M_{r\Theta^{-(k-1)}}}^{\mu}(\R^d)$, where 
$\widetilde{f}$ is defined as in \eqref{widetildef}.
We apply the weak Harnack inequality \eqref{theo:hoelderprep_assumption} to the 
function $w$ for $r_1=r\Theta^{-k+1}\in(0,r]$. Then

\begin{align*}
  \Bigg(\dashint_{M_\frac{r_1}{\lambda}}w(x)^p \, \d x \Bigg)^{1/p} \leq c_a 
\Bigg(\inf\limits_{M_{\frac{r_1}{\Theta}}}& w + 
 r_1^{\alam}\sup\limits_{x\in M_{\frac{r_1}{\sigma}}} \int_{\R^d} 
w^{-}(z)\mu(x,\d z) \\
& + \frac{r_1^{\alam(1-\frac{\beta}{q})} 
\|f\|_{L^q(M_{\frac{r_1}{\sigma}})}\Theta^{(k-1)\delta}}{\|u\|_{\infty} + 
\tfrac{2}{\kappa}
r^{\alam(1-\frac{\beta}{q})} \|f\|_{L^q(M_{\frac{r}{\sigma}})}}\bigg).
\end{align*}

We assume $\delta\leq \alam(1-\frac{\beta}{q})$. Then

\begin{align*}
\frac{r_1^{\alam(1-\frac{\beta}{q})} 
\|f\|_{L^q(M_{\frac{r_1}{\sigma}})}\Theta^{(k-1)\delta}}{\|u\|_{\infty} + 
\tfrac{2}{\kappa}
r^{\alam(1-\frac{\beta}{q})} \|f\|_{L^q(M_{\frac{r}{\sigma}})}} 
& \leq \frac{ 
\|f\|_{L^q(M_{\frac{r_1}{\sigma}})}\Theta^{(k-1)(\delta-\alam(1-\frac{\beta}{q}
))}}{\tfrac{2}{\kappa}
 \|f\|_{L^q(M_{\frac{r_1}{\sigma}})}} \leq \frac{\kappa}{2}.
\end{align*}

Using assumption \eqref{case1} the left hand side can be estimated as follows
\begin{align*}
 \Bigg(\dashint_{M_\frac{r\Theta^{-(k-1)}}{\lambda}}w(x)^p \, \d x \Bigg)^{1/p} 
&\geq 
\Bigg(\dashint_{M_\frac{r\Theta^{-(k-1)}}{\lambda}}w(x)^p\mathds{1}_{\{v(x)\leq 
0\}} \, \d x \Bigg)^{1/p} \\
 & = \Bigg(\frac{|\{x\in M_{\frac{r\Theta^{-k+1}}{\lambda}} \colon v(x) \leq 
0\}|}{|M_{\frac{r\Theta^{-k+1}}{\lambda}}|}\Bigg)^{1/p} \\
 & \geq 
\Bigg(\frac{\frac{1}{2}|M_{\frac{r\Theta^{-k+1}}{\lambda}}|}{|M_{\frac{r\Theta^{
-k+1}}{\lambda}}|}\Bigg)^{1/p} = \frac{1}{2^{1/p}}.
\end{align*}

Moreover by \eqref{v2}
\begin{equation} \label{1-v}
(1-v(z))^{-} \leq (1-2\Theta^{j\delta}+1)^{-}=2\Theta^{j\delta}-2. 
\end{equation}

Consequently

\begin{align*}
 \inf\limits_{M_{r\Theta^{-k}}} w  \geq  2\kappa - \frac{\kappa}{2}  
 - (r\Theta^{-(k-1)})^{\alam} \sup\limits_{x\in 
M_{\frac{r\Theta^{-(k-1)}}{\sigma}}} \int_{\R^d} 
w^{-}(z) \mu(x,\d z)
\end{align*}

Let us show that the last term depends continuously on $\delta$ and can be 
made arbitrarily small. Note, that $w\geq 0$ in 
$M_{r\Theta^{-(k-1)}}$. Let $x \in M_{\frac{r\Theta^{-(k-1)}}{\sigma}}$ and $j 
\in \N$. From \eqref{1-v} we deduce, 
\begin{align*}
 &\int_{\R^d} w^{-}(z)\mu(x,\d z) =\int_{\R^d\setminus 
M_{r\Theta^{-(k-1)}}} w^{-}(z)\mu(x,\d z) \\ 
& \qquad  = \sum_{j=1}^\infty \int_{M_{r\Theta^{-k+j+1}}\setminus 
M_{r\Theta^{-k+j}}} \mu(x,\d z)  
\leq \sum_{j=1}^{\infty} (2\Theta^{j\delta}-2) \int_{\R^d \setminus 
M_{r\Theta^{-k+j}} } \mu(x,\d z). 
\end{align*}
Note that
\begin{align*}
& (r\Theta^{-(k-1)})^{\alam}\mu(x,\R^d\setminus M_{r\Theta^{-k+j}}) \\
& \leq (r\Theta^{-(k-1)})^{\alam} 2\sum_{i=1}^d 
(\alpha_i)(2-\alpha_i)\int_{(r\Theta^{-k+j})^{\alam/\alpha_i} 
((\sigma-1)/\sigma)}^{\infty} |h|^{-1-\alpha_i} \, \d h \\
& = (r\Theta^{-(k-1)})^{\alam} 2\sum_{i=1}^d (2-\alpha_i) 
(r\Theta^{-k+j})^{-\alam}(\sigma/(\sigma-1))^{\alpha_i} \\
& \leq 4d (\sigma/(\sigma-1))^{\alam}(\Theta^{-j+1})^{\alam} \,.
\end{align*}
Thus for every $l\in\N$, 
\begin{align*}
r_1^{\alam}\int_{\R^d} w^{-}(z)\mu(x,\d z) &\leq c_1  \sum_{j=1}^l 
(\Theta^{j\delta}-1) (\Theta^{-j+1})^{\alam} +  
 c_1  \sum_{j=l+1}^{\infty} \Theta^{j\delta} (\Theta^{-j+1})^{\alam} \\
 & =: I_1 + I_2\,.
\end{align*}
with a positive constant $c_1$ depending only on $\sigma, d, \alam$. 
From now on, we assume $\delta \leq \frac{\alpha}{2}$. First, we choose $l \in 
\N$ sufficiently large in dependence of $\alam$ such that $I_2
\leq \frac{\kappa}{4}$. Second, we choose 
$\delta$ sufficiently small such that $I_1 \leq 
\frac{\kappa}{4}$. Since these choices are independent of $x$ and $k$, we 
have proved
\begin{align*}
r_1^{\alam}\sup\limits_{x\in M_{\frac{r_1}{\sigma}}} \int_{\R^d} 
w^{-}(z)\mu(x,\d z) \leq \frac{\kappa}{2} \,.
\end{align*}

Thus 
\[ w\geq \inf\limits_{M_{r\Theta^{-k}}} w \geq \kappa \quad \text{on } 
M_{r\Theta^{-k}},\]
or equivalently $v\leq 1-\kappa$ on $M_{r\Theta^{-k}}$.
\item  Now, we assume 
 \begin{equation}\label{case2}
  |\{x\in M_{\frac{r\Theta^{-k+1}}{\lambda}} \colon v(x) > 0\}|\geq 
\frac{1}{2}|M_{\frac{r\Theta^{-k+1}}{\lambda}}|.
 \end{equation}
 Our aim is to show that in this case 
 \[ v(z)\geq -1+\kappa \quad \text{ for almost every } M_{r\Theta^{-k}}.\]
 
 Similar to the first case, this implies for almost every $z\in 
M_{r\Theta^{-k}}$
  \begin{align*}
  \wtu(z) \geq b_{k-1} - 2\Theta^{-k\delta}.
 \end{align*}
Choosing $b_k=b_{k-1}$ and $a_k=a_{k-1}-2\Theta^{-k\delta}$, then by the 
induction hypothesis $\wtu(z)\leq b_{k-1}=b_k$ and by the
previous calculation $\wtu(z)\geq a_k$. Hence \eqref{Mm} follows. 

It remains to show in this case $v(z)\leq -1+\kappa$ for almost every
$z\in M_{r\Theta^{-k}}$. 

Consider $w=1+v$ and note $\mathcal{E}(v,\varphi)=(\widetilde{f},\varphi)$ for 
every $\varphi\in H_{M_{r\Theta^{-(k-1)}}}^{\mu}(\R^d)$ and $w\geq 
0$ in $M_{r\Theta^{-(k-1)}}$.
Then the desired statement follows analogously to Case 1.
\end{enumerate}
\end{proof}

Finally, we can prove our main result concerning Hölder regularity estimates.

\begin{proof}[Proof of \autoref{theo:hoelder}]

We distinguish two following two cases:\\
If $\mathbbm{d}(x,y) \geq \frac14$, then \eqref{Hoelder-estimate} follows from \autoref{cor:weak_harnack-scaled} and 
\autoref{theo:hoelderprep} and the observation that for all $x,y\in M_{\frac{1}{2}}$
 \begin{equation}\label{dabschhoeld}
 \begin{aligned}
   \mathbbm{d}(x,y) & =  
\sup\limits_{k\in\{1,\dots,d\}}\left\{|x_k-y_k|^{\alpha_k/\alam}\right\} \leq 
\sup\limits_{k\in\{1,\dots,d\}}\left\{|x_k-y_k|^{\alpha_{\min}/\alam}\right\} \\
   & = 
\left(\sup\limits_{k\in\{1,\dots,d\}}\left\{|x_k-y_k|\right\}\right)^{\alpha_{
\min}/\alam} \leq  |x-y|^{\alpha_{\min}/\alam}.
 \end{aligned}
 \end{equation}
 If $\mathbbm{d}(x,y) <\frac14$, then there is a $\rho\in(0,\frac14)$ such 
 that $\frac{\rho}{2}\leq \mathbbm{d}(x,y) \leq \rho$. 
 We cover $M_{1-4\rho}$ by a countable family of balls $(M_i)_{i}$ with respect 
 to the metric space $(\R^d,\mathbbm{d})$ with radii $\rho$, such that there
 is a $j$ with $x,y\in 2M_j$, where $2M_j$ is the ball with the same center as 
 $M_j$ but with radius $2\rho$.  Let $\widetilde{M_j}$ be the ball with the same 
 center as $M_j$ and maximal radius such that $\widetilde{M_j}\subset M_1$. By
\autoref{theo:hoelderprep} there is a $\delta_1\in(0,1)$ and $c_2>0$ such that
\begin{align*}
\osc\limits_{2M_j} u \leq c_2 \rho^{\delta} (\|u\|_{\infty} 
+ \|f\|_{L^q(M_{\frac{15}{16}})}) 
\leq c_3 |x-y|^{\delta_1\alpha_{\min}/\alpha_{\max}} 
 (\|u\|_{\infty} + \|f\|_{L^q(M_{\frac{15}{16}})}),
\end{align*}
which finishes the proof.
\end{proof}

\newpage
\bibliographystyle{abbrv}
\bibliography{bib}
\end{document}